\documentclass[12pt]{article}
\usepackage{amsmath,latexsym,amssymb,amsfonts,amsbsy, amsthm}
\usepackage[colorlinks,linkcolor=red,anchorcolor=green,citecolor=blue]{hyperref}
\usepackage[usenames]{color}
\usepackage{xspace,colortbl,float}
\usepackage{array,multirow}
\usepackage{booktabs}
\allowdisplaybreaks
\linespread{1.2}
\textwidth 160mm \textheight 226mm \baselineskip=16pt
\newcommand{\beq}{\begin{equation}}
\newcommand{\eeq}{\end{equation}}
\newcommand{\ben}{\begin{eqnarray}}
\newcommand{\een}{\end{eqnarray}}
\newcommand{\beno}{\begin{eqnarray*}}
\newcommand{\eeno}{\end{eqnarray*}}


\newtheorem{thm}{Theorem}[section]

\newtheorem{lem}[thm]{Lemma}
\newtheorem{prop}[thm]{Proposition}
\newtheorem{coro}[thm]{Corollary}
\newtheorem{rmk}[thm]{Remark}

\newcommand{\dif}{\mathrm{d}}

\newcommand{\lam}{\lambda}
\newcommand{\lag}{\langle}
\newcommand{\rag}{\rangle}

\begin{document}
\topmargin -7mm \oddsidemargin -1mm

\title{\Large \bf Dynamics of  nonlinear hyperbolic equations of Kirchhoff type
\author{Jianyi Chen $^{1}$,\ Yimin Sun $^{2}$,\ Zonghu Xiu $^1$,\  Zhitao Zhang $^{3,\; 4,}$
\thanks{Corresponding author.
\newline\indent The reaserch was supported by National Natural Science Foundation of China (11701310, 11771428,11926335,12031015), and the Research Foundation for Advanced Talents of Qingdao Agricultural University (6631114328,  6631115047).
\newline\indent \textit{E-mail addresses}: chenjy@amss.ac.cn (J.Y. Chen); ymsun@nwu.edu.cn (Y.M. Sun); qingda@163.com (Z.H. Xiu); zzt@math.ac.cn (Z.T. Zhang).
}\\
{\small $^{1}$ Science and Information College, Qingdao Agricultural University,}\\
 {\small Qingdao $266109$, P. R. China. }\\
 {\small $^{2}$ School of Mathematics, Northwest University, Xi'an $710127$, P. R. China.}\\
{\small $^{3}$ HLM, Academy of Mathematics and Systems Science, Chinese Academy}\\
 {\small  of Sciences, Beijing $100190$,  P. R. China.}\\
{\small   $^4$ School of Mathematical Sciences, University of Chinese  Academy }\\
{\small of Sciences, Beijing $100049$,  P. R. China }
}
}
\date{}

\renewcommand{\theequation}{\thesection.\arabic{equation}}
\setcounter{equation}{0}

\maketitle

\begin{abstract}
 In this paper, we study the initial boundary value problem of the important hyperbolic Kirchhoff equation
   $$u_{tt}-\left(a \int_\Omega |\nabla u|^2 \dif x +b\right)\Delta u = \lambda u+ |u|^{p-1}u
   ,$$
    where $a$, $b>0$, $p>1$, $\lambda \in \mathbb{R}$ and the initial energy is arbitrarily large. We prove several new theorems on the dynamics such as the boundedness or finite time blow-up of solution under the different range of $a$, $b$, $\lambda$ and  the initial data for the following cases: (i) $1<p<3$, (ii) $p=3$ and $a>1/\Lambda$, (iii) $p=3$, $a \leq 1/\Lambda$ and $\lam <b\lam_1$, (iv) $p=3$, $a < 1/\Lambda$ and $\lam >b\lam_1$,  (v) $p>3$ and $\lam\leq b\lam_1$, (vi) $p>3$ and $\lam> b\lam_1$, where $\lam_1 = \inf\left\{\|\nabla u\|^2_2 :~ u\in H^1_0(\Omega)\ {\rm and}\ \|u\|_2 =1\right\}$, and $\Lambda = \inf\left\{\|\nabla u\|^4_2 :~ u\in H^1_0(\Omega)\ {\rm and}\ \|u\|_4 =1\right\}$. Moreover, we prove the invariance of some stable and unstable sets of the solution for suitable $a$, $b$ and $\lam$, and give the sufficient conditions of initial data to generate a vacuum region of the solution. Due to the nonlocal effect caused by the nonlocal integro-differential term, we show many interesting differences between the blow-up phenomenon of the problem for $a>0$ and $a=0$.
   \\[1.2ex]
  \emph{AMS Subject Classification (2010)}: 35L20; 35B30; 35B44; 58E30.
  \\[1.1ex]
 {\emph{Keywords}}: \rm{Hyperbolic  equation; Kirchhoff equation; finite time blow-up; initial  boundary value problem; Nehari manifold}
\end{abstract}

\renewcommand{\theequation}{\thesection.\arabic{equation}}
\setcounter{equation}{0}

\section{Introduction}

\label{} \ \ \ \ \  Let $\Omega$ be an open bounded domain of $\mathbb{R}^{3}$ with smooth boundary $\partial \Omega$. We consider the following initial boundary value problem of nonlinear hyperbolic Kirchhoff equation:
\begin{eqnarray}\label{zh809-1}
  \left\{\begin{array}{l}
    u_{tt}-\left(a \int_\Omega |\nabla u|^2 \dif x +b\right)\Delta u = \lambda u+ |u|^{p-1}u,\;\  t>0,\ x\in \Omega,
    \\[1.2ex]
    u(0,x)=u_0 (x),\ u_t (0,x)=u_1 (x),\;\ \ \ x\in \Omega,
     \\[1.2ex]
       u(t,x)=0, \;\ \ \;\ \ \ \quad t\geq 0,\ x\in \partial \Omega,
      \end{array}\right.
\end{eqnarray}
 where $\nabla u$ and $\Delta u$ denote the gradient and the Laplacian of $u$ with
respect to the space variables $x=(x_1, x_2, x_3)$. Throughout this paper, we assume that $\lambda \in \mathbb{R}$, $a$, $b>0$, $p>1$ and
 \begin{eqnarray}\label{1.2}
  u_0 \in H^2(\Omega)\cap H_{0}^1 (\Omega),\ \ \ \ \ u_1 \in H_{0}^1 (\Omega).
 \end{eqnarray}

\subsection{Background}
\label{} \ \ \ \ \ For $a=0$, $b=1$, $\lambda =0$, equation \eqref{zh809-1} reduces to the nonlinear wave equation
  \begin{eqnarray}\label{1.3}
    u_{tt}- \Delta u= |u|^{p-1}u,\ \ \ x\in \Omega,
  \end{eqnarray}
which has received considerable attention since the celebrated work of Sattinger \cite{S}. Many mathematicians have devoted their effort to improve Sattinger's result; see the papers of Cazenave \cite{C}, Gazzola and Squassina \cite{Gaz}, Liu \cite{Liu}, Liu and Zhao \cite{LZ} which solve the initial-boundary value problem (1.3), and the papers of Br\'{e}zis \cite{Bre-Cor-Nir}, Chen and Zhang \cite{Ch-Zh1}-\cite{Ch-Zh3}, Ding et al. \cite{DL}, Rabinowitz \cite{Ra}, Schechter \cite{Sch1} which concern the periodic solutions for (1.3).

    When $a>0$, the problem \eqref{zh809-1} is nonlocal due to the presence of the  integro-differential term $\big(\int_\Omega |\nabla u|^2 \dif x\big) \Delta u$, which arises in many interesting models in physics, biology as well as other areas. It is well known that \eqref{zh809-1} is a natural generalization of the model
\begin{eqnarray*}
   u_{tt}-\left(a \int_0^{L} |\nabla u|^2 \dif x +b\right) u_{xx} = 0,\;\  t>0,\ x\in [0, L]
\end{eqnarray*}
 which was first proposed by Kirchhoff \cite{K} to describe the transversal vibrations of a stretched string in dimension one, where the subsequent change in length of the string was taken into consideration. Here, $u = u(t,x)$ denotes the transverse displacement of the point $x$ at the instant $t$, $L$ is the length of the string, and the parameter $b$ denotes the initial tension while $a$ is related to the intrinsic properties of the string (such as Young's modulus, the string cross-sectional area and some of  other physical quantities). In this model, Kirchhoff used the integral term $\int_{0}^L |\nabla u|^2 \dif x$ to present the average of the change in tension along the vibrating string taking account the change of the string's length. Moreover, such models can be used in tension modulations for the sound synthesis and  the control practice of mechanical systems (see \cite{WXS} for example).

    During the past decades, the Kirchhoff type problems
     \begin{eqnarray}\label{zh809-2}
      u_{tt} - m\left(\int_\Omega |\nabla u|^2 \dif x\right) \Delta u = f,
    \end{eqnarray}
    have been studied under many aspects, where $m: \mathbb{R}^+ \rightarrow \mathbb{R}^+$ is some continuous function. We refer the reader to the papers \cite{AS, B, GG1, GG2, GG3,H,MR, MR2, MR3, MS} and \cite{AP, L} for solving \eqref{zh809-2} with $f=0$ (unforced case) and $f=f(t,x)$ (linear forced case) respectively,
under various conditions on the functions $m$, $f$, the regularity and size of the initial data $u_0$, $u_1$. A functional framework to solve \eqref{zh809-2} is established by Lions in \cite{L}. It  is a challenging problem
 to study \eqref{zh809-2} with nonlinear forced term, as the interaction between nonlinearity and nonlocal effect make it more difficult and particularly interesting.\par
  When the initial data are small and analytically, D'Ancona and Spagnolo \cite{AS2}, Ghisi \cite{G1} proved the global solvability of the problem
  $$u_{tt}-\left(\int_\Omega |\nabla u|^2 \dif x +1\right)\Delta u = f(u, u_t, \nabla u)
  $$
 for $f \in C^\infty$ satisfying some growth assumptions.\par
   For the initial data without the sufficiently small and smooth assumptions, Ikehata \cite{I}, Ikehata and Okazawa \cite{IO} constructed the local solutions for \eqref{zh809-2} with $f=f(u)$ possessing polynomial growth and $m\in C^1$ satisfying $m(s)\geq m_0 >0$ for some constant $m_0$, and studied the blow-up phenomenon for \eqref{zh809-1} in particular case $\lambda =0$, $p=3$ and the initial data $(u_0, u_1)$ having the energy which is smaller than the mountain pass level
 \begin{eqnarray*}
   d = \inf\limits_{u \in H^1_0 (\Omega) \setminus \{0\}}\sup\limits_{\tau>0} J(\tau u),
 \end{eqnarray*}
 where $J(u)$, $E(0)$ are defined by \eqref{zh809-4} and \eqref{1.11} below with $p=3$, $\lam =0$. Their results depend heavily upon the behavior of $J(u)$ and the characterization of mountain pass level $d$ for \eqref{zh809-1} with $p=3$ and $\lam = 0$. Later, many progresses have been made in the well-posedness of the problem \eqref{zh809-2} with low initial energy and some damping terms; see \cite{CL, O, PPXZ, WT}.

   We would like to mention that, the well posedness of the evolution problem \eqref{zh809-1} is closely linked to the stationary state of the Kirchhoff type problem.
  Motivated by the works of \cite{CKW, ZhPe}, we find that the nature of the stationary solutions of the problem \eqref{zh809-1} is determined by the sign of number $p-3$ and the value of $\lam$. The cases $p<3$, $p=3$, $p>3$ are called {\bf 4-sublinear}, {\bf asymptotically 4-linear} and {\bf 4-superlinear} respectively (see \cite{ZhPe}). We collect some known facts on the solution $u=u(x)$ for the stationary problem corresponding to \eqref{zh809-1}
  \begin{eqnarray}\label{zh809-3}
  \left\{\begin{array}{l}
    -\left(a \int_\Omega |\nabla u|^2 \dif x +b\right)\Delta u = \lambda u+ |u|^{p-1}u\;\  {\rm in}\ \Omega,
    \\[1.2ex]
    u=0 \;\ \ \;\ \ \ \quad {\rm on}\ \partial \Omega.
      \end{array}\right.
\end{eqnarray}

  $\bullet$ Let $\Omega$ be a smooth open bounded domain of $\mathbb{R}^{3}$, Zhang and Perera \cite{ZhPe} proved that there are constants $A^* > A_* >0$ such that \eqref{zh809-3} possessing a positive solution, a negative solution and a sign changing solution in the following cases:  (i) $1<p<3$ and $\lam >b\lam_2$, (ii) $p=3$, $\lam >b\lam_2$ and $a>A^*$, (iii) $p=3$, $\lam <b\lam_1$ and $0<a<A_*$, (iv) $3<p<5$ and $\lam <b\lam_1$, where $0<\lam_1 <\lam_2 \leq \cdots$ are the Dirichlet eigenvalues of $-\Delta$ on $\Omega$.

  $\bullet$ In the case $\Omega$ is a 3 dimensional open ball, Huang et al. \cite{HLW} showed that the condition $\lam >b\lam_2$ required in \cite{ZhPe} is not necessary for constructing a positive solution of \eqref{zh809-3} with $1<p<3$. Naimen \cite{N} proved the existence, nonexistence and multiplicity of positive solutions for \eqref{zh809-3} with $p=5$ under some suitable assumptions on $a$, $b$ and $\lam$.\\
  For more results we refer the reader to papers \cite{ACM, CKW, DPS, FIJ,LLS, N, PeZh,SZ1, SZ2, TC, ZhPe} concerning with various types of nonlinearities and domains. \par\medskip

   In the present work, we are interested in studying the evolution problem \eqref{zh809-1} with arbitrary initial energy and nonlinear forced term $\lambda u + |u|^{p-1}u$ for any $p \in (1, \infty)$ and $\lambda\in \mathbb{R}$, where $\lam$ is regarded as a parameter of the linear perturbation term. We study the existence and classification of the initial data of \eqref{zh809-1} which generate the bounded solutions or finite time blow-up solutions for the following cases:\par
   (i) $1<p<3$,\par
   (ii) $p=3$ and $a>1/\Lambda$,\par
   (iii) $p=3$, $a \leq 1/\Lambda$ and $\lam <b\lam_1$,\par
   (iv) $p=3$, $a < 1/\Lambda$ and $\lam >b\lam_1$,\par
   (v) $p>3$ and $\lam\leq b\lam_1$,\par
   (vi) $p>3$ and $\lam> b\lam_1$,\\
   where the numbers $\lam_1$, $\Lambda$ are given in Section 1.2.  We investigate how the mountain pass level of \eqref{zh809-1} relies on $p$, $\lam$ and find some interesting phenomena caused by the interaction between nonlinearity and nonlocal effect of the problem \eqref{zh809-1}.

    The main contributions of our work are as follows.\par


     In the case $1<p<3$, for any $a$, $b>0$ and $\lambda \in \mathbb{R}$ we prove the boundedness of the solution for \eqref{zh809-1}
with initial dada $(u_0, u_1)$ satisfying \eqref{1.2}. Moreover, under more restrictive assumptions on $a$, $\lam$ and the initial data, we obtain a vacuum region such that there is no weak solution of \eqref{zh809-1} in this region (see Theorem 2.2 and Remark 2.3).

  For $p=3$, we can see the nonlocal coefficient $a$ may affect the behavior of the solution for \eqref{zh809-1}. When $a$, $b$, $\lambda$ lie within certain respective intervals, we
give the conditions of initial data such that the corresponding solution to \eqref{zh809-1} is bounded or blow-up in finite time. We also construct the stable and unstable sets of the solution via a careful analysis of the functional $J(u)$ and Nehari manifold $\textit{\textbf{N}}_3$ defined by \eqref{zh809-4} and \eqref{1.15} with $p=3$ respectively; see Case A and B of Theorem 2.1 (\emph{invariance of the stable and unstable sets}), Theorem 2.4 (\emph{boundedness of solution}), Theorem 2.6 and Theorem 2.8 (\emph{blow-up and a vacuum region of solution}). A striking difference is found among the following three cases: $a\geq 1/\Lambda$, $0<a<1/\Lambda$ with $\lam<b\lam_1$, and $0<a<1/\Lambda$ with $\lam > b\lam_1$.

   In the case $p > 3$, we construct some specific sets to classify the initial data to generate the bounded or blow-up solutions for \eqref{zh809-1} and prove the invariance of these sets under the flow of \eqref{zh809-1} for $3<p<5$ and $\lam \leq b\lam_1$, see Case C of Theorem 2.1. Then we study the behavior and vacuum region of solution to \eqref{zh809-1} for some suitable $b$, $\lam$ and the initial data $(u_0, u_1)$, see Theorem 2.9 and Theorem 2.10. In contrast to the case $p=3$, we show that the nonlocal coefficient $a>0$ may not affect the blow-up behavior of the solution for \eqref{zh809-1} when $p>3$.\par

     Our results reveal many striking differences between the behavior of solutions of the nonlocal problem \eqref{zh809-1} and the local problem \eqref{1.3}, which are described for details in Section 2.\par

  This paper is organized as follows. We introduce some notations in the next subsection, and state our main results in Section 2. In Sections 3 and 4, we study the geometry structure of Nehari manifold for $p=3$ and $3<p<5$ respectively, which allow the variational treatment of problem \eqref{zh809-1}. In Section 5, we provide the proofs of the main results. Finally, we prove compatibility of the assumptions for main theorems  respectively in Section 6.

\subsection{Notations and functional settings}
\label{} \ \ \ \ \
In order to proceed with the statement of our results, we require the following notations.

  We use $\|\cdot\|_q$ as the $L^q (\Omega)$ norm for $1\leq q\leq \infty$, and $\|\nabla \cdot\|_2$ as the Dirichlet norm in $H_0^1 (\Omega)$.

  Let $\lambda_1 >0$ be the principal eigenvalue of $-\Delta$ on $\Omega$ under the homogeneous Dirichlet boundary value conditions, with corresponding positive principal eigenfunction $\psi_1 (x)$. \par
  For all $u\in H_0^1(\Omega)\backslash \{0\}$, we have
   \begin{eqnarray}\label{1.6}
   b_0 \|\nabla u\|_2^2 \;\leq\; b\|\nabla u\|_2^2 - \lambda \|u\|_2^2 \;\leq\; c_1 \|\nabla u\|_2^2,
   \end{eqnarray}
 where
 \begin{eqnarray}\label{1.7}
 b_0 =\left\{\begin{array}{ll}
      b-\frac{\lambda}{\lambda_1},\ \ &{\rm for}\ \lambda \geq 0,
    \\[1.2ex]
   b , \ \ &{\rm for}\ \lambda < 0,
      \end{array}\right.
   \ \ {\rm and}\ \ c_1 =\left\{\begin{array}{ll}
      b,\ \ &{\rm for}\ \lambda \geq 0,
    \\[1.2ex]
   b -\frac{\lambda}{\lambda_1}, \ \ &{\rm for}\ \lambda < 0.
      \end{array}\right.
 \end{eqnarray}
 We have $b_0 > 0$ for $\lam < b\lam_1$, and $b_0 = 0$ for $\lam = b\lam_1$.\par
   If $\lam >b\lam_1$, we find that $b\|\nabla \psi_1\|_2^2 - \lambda \|\psi_1\|_2^2 = (b\lam_1 - \lam)\|\psi_1\|_2^2 <0$.\par
   We define
 $$L^+ = \left\{ u \in H^1_0(\Omega):~b\|\nabla u\|_2^2 - \lambda \|u\|_2^2 >0\right\},
 $$
 and define $L^0$, $L^-$ similarly by replacing `$>0$' by `$=0$' or `$<0$' respectively.\par
  $\bullet$ When $\lam <b\lam_1$, it follows from \eqref{1.6} that $L^0 = \left\{0\right\}$ and $L^-$ is empty. \par
  $\bullet$  When $\lam = b\lam_1$, we have $L^0 = \left\{u:~u=\tau \psi_1,\ \tau\in \mathbb{R}\right\}$ and $L^-$ is empty.\par
   $\bullet$ When $\lam >b\lam_1$, we deduce that $\tau \psi_1 \in L^-$ for all $\tau \in \mathbb{R}\backslash \{0\}$, and the size of $L^-$ gets bigger and bigger with increasing of $\lam$.\smallskip

  Let  $\Omega \subset \mathbb{R}^3$ be a bounded domain, and
  \begin{eqnarray}\label{1.8}
    S_q = \inf\limits_{u\in H^1_0(\Omega)\backslash \{0\}} \frac{\|\nabla u\|_2^2}{\|u\|^2_q}\ \ \ \ ({\rm for}\ 1<q\leq 6)
  \end{eqnarray}
  be the optimal Sobloev constant for the embedding $H^1_0(\Omega)\hookrightarrow L^q(\Omega)$, then $S_q >0$ and the embedding $H^1_0(\Omega)\hookrightarrow L^q(\Omega)$ is compact for $1< q< 6$.\par
   For simplicity, we write $\Lambda = S_4^2$, which will be used frequently in studying \eqref{zh809-1} in the case $p=3$. Because of the compactness of Sobolev embedding $H^1_0(\Omega)\hookrightarrow L^4(\Omega)$ and Fatou's lemma, then
   \begin{eqnarray*}
   \Lambda = \inf\left\{\|\nabla u\|^4_2 :~ u\in H^1_0(\Omega),\ {\rm and}\ \|u\|_4 =1\right\}>0
   \end{eqnarray*}
   is achieved by some $\phi_\Lambda \in H^1_0(\Omega)$ satisfying
    \begin{eqnarray}\label{1.9}
       \|\phi_\Lambda\|_4 =1, \ \ {\rm and}\ \  \|\nabla \phi_\Lambda\|_2^4 = \Lambda.
    \end{eqnarray}

  We define the energy corresponding to \eqref{zh809-1} by
   $$E(t)= \frac{1}{2}\int_{\Omega}|u_t|^2 \dif x + J(u),
   $$
  where
  \begin{eqnarray}\label{zh809-4}
  J(u)= \frac{a}{4}\|\nabla u\|_2^4 + \frac{b}{2}\|\nabla u\|_2^2 - \frac{\lambda}{2}\|u\|_2^2 - \frac{1}{p+1}\|u\|_{p+1}^{p+1}.
  \end{eqnarray}
 It is worthy to point out that the behavior of the solution to \eqref{zh809-1} relies heavily on the size of initial energy
 \begin{eqnarray}\label{1.11}
 E(0)= \frac{1}{2}\int_{\Omega}|u_1|^2 \dif x + J(u_0).
 \end{eqnarray}
   Consider the Nehari functional $I: H^1_0(\Omega)\rightarrow \mathbb{R}$ defined by
   \begin{eqnarray}\label{1.12}
     I(u)= \lag J'(u),~u\rag = a\|\nabla u\|_2^4 + b\|\nabla u\|_2^2 - \lambda \|u\|_2^2 - \|u\|_{p+1}^{p+1},
   \end{eqnarray}
where $J'$ is the Gateaux derivative of the functional $J$, and $\lag \cdot, \cdot \rag$ is the usual duality between $H^{-1}(\Omega)$ and $H^1_0(\Omega)$. Combining \eqref{1.6} with \eqref{1.12} and by a direct computation, we have
  \begin{eqnarray}\label{1.13}
    J(u) &=&a\left(\frac{1}{4}-\frac{1}{p+1}\right)\|\nabla u\|^4_2 + b\left(\frac{1}{2}-\frac{1}{p+1}\right)\|\nabla u\|^2_2\nonumber \\
     &~&- \lambda \left(\frac{1}{2}-\frac{1}{p+1}\right)\|u\|^2_2 +\frac{1}{p+1} I(u),
  \end{eqnarray}
and
  \begin{eqnarray}\label{1.14}
    E(0) &=& \frac{1}{2}\|u_1\|^2_2 + \frac{a(p-3)}{4(p+1)} \|\nabla u_0\|^4_2 + \frac{b(p-1)}{2(p+1)} \|\nabla u_0\|^2_2
       - \frac{\lambda (p-1)}{2(p+1)} \|u_0\|^2_2  + \frac{1}{p+1} I(u_0)\nonumber \\
       &\geq& \frac{1}{2}\|u_1\|^2_2 + \frac{a(p-3)}{4(p+1)} \|\nabla u_0\|^4_2 + \frac{(p-1)b_0}{2(p+1)} \|\nabla u_0\|^2_2 + \frac{1}{p+1} I(u_0).
  \end{eqnarray}

   In what follows, we introduce some function spaces which are of great used in our proof. For $p \geq 3$, we define
 \begin{eqnarray}\label{1.15}
 \textit{\textbf{N}}_p ~&=& \left\{ u \in H^1_0(\Omega)\backslash \{0\}:~I(u)=0\right\} \ \ \ \ -{\rm {\bf{the\ Nehari\ manifold}},}\\
 \textit{\textbf{N}}_p^{\,+} &=& \left\{ u \in H^1_0(\Omega):~I(u)>0\right\} \cup \{0\}\ \ -{\rm {\bf{inside\ space}},} \nonumber\\
 \textit{\textbf{N}}_p^{\,-} &=& \left\{ u \in H^1_0(\Omega):~I(u)<0\right\} \ \ \ \ \qquad -{\rm {\bf{outside\ space}},} \nonumber \\
 \textit{\textbf{W}}_{p} ^{\, +} &=& J^{d_p} \cap \textit{\textbf{N}}_p^{\,+} \ \ \quad \qquad\qquad\qquad -{\rm {\bf{stable\ set\ (potential\ well)}},} \nonumber \\
 \textit{\textbf{W}}_p^{\,-} &=& J^{d_p} \cap \textit{\textbf{N}}_p^{\,-} \ \ \quad \qquad\qquad\qquad -{\rm {\bf{unstable\ set}},} \nonumber
 \end{eqnarray}
 where
 \begin{eqnarray}\label{1.16}
    d_p = \inf\limits_{u \in \textit{\textbf{N}}_p} J(u)
  \end{eqnarray}
is called the depth of the potential well, and $J^{d_p}= \left\{ u \in H^1_0(\Omega): J(u)< d_p\right\}$
  is the sublevel set of $J$. Clearly, $0\not\in {\textit{\textbf{N}}_p} \cup {\textit{\textbf{N}}_p^{\,-}}$. \par

  It is turned out that Nehari manifold is a powerful tool in studying the stationary solutions to \eqref{zh809-1}. We refer the reader to the works of \cite{BZ, IO} concerning the structure of the Nehari manifold of \eqref{zh809-1} for the local case $a=0$, $1<p<5$ and nonlocal case $a>0$, $p=3$, $\lam =0$ respectively. For $a>0$, $1<p<5$ and $\lam \neq 0$, many interesting results on the Nehari manifold of \eqref{zh809-1} with $\lam u$ replaced by $\lam |u|^{q-1}u$ for $0<q<1$ are found in \cite{CKW}.

  Our focus here is to deal with the linear perturbation case. We have the following proposition to collect some properties concerning the geometry of ${\textit{\textbf{N}}_p}$, $\textit{\textbf{W}}_p^{\,+}$ and $\textit{\textbf{W}}_p^{\,-}$, which play important roles in studying the dynamics of \eqref{zh809-1}.

  \begin{prop}~\par
   {\rm\bf (1)} If $p=3$, $0<a<1/\Lambda$ and $\lambda < b\lambda_1$, then $\textit{\textbf{N}}_3$, $\textit{\textbf{W}}_3^{\,+}$, $\textit{\textbf{W}}_3^{\,-}$ are nonempty, and\par
  {\rm (i)} for every $u\in S$, where $S = \left\{u\in H^1_0(\Omega):\, \|u\|_4^4 - a \|\nabla u\|_2^4 > 0 \right\}$ {\rm(}see {\rm \eqref{3.5})}, there exists a unique number $\sigma_u >0$ {\rm(}see {\rm \eqref{3.7})}, such that $\sigma_u u \in \textit{\textbf{N}}_3$ satisfying $J(\sigma_u u) = \sup\limits_{\tau >0} J(\tau u)$, and $\tau u \in \textit{\textbf{N}}_3^{\,+}$ for $\tau \in (0, \sigma_u)$, $\tau u \in \textit{\textbf{N}}_3^{\,-}$ for $\tau \in (\sigma_u, +\infty)${\rm ;}\par
  {\rm (ii)} for any $u\in S^c$, we have $\tau u \in \textit{\textbf{N}}_3^{\,+}$ for $\tau >0${\rm ;}\par
  {\rm (iii)} $J(u)$ is bounded below on $\textit{\textbf{N}}_3$, thus the number $d_3$ appearing in $\eqref{1.16}$ with $p=3$ is well defined. Moreover, we have $d_3 >0$.\par
   {\rm\bf (2)} If $p=3$, $0<a<1/\Lambda$ and $b\lambda_1 < \lambda < b\lambda_1 + \delta$, where $\delta>0$ is determined by {\rm Lemma} $3.8$ below, and let $\sigma_u >0$ be the number defined by \eqref{3.7}. Then$:$\par  {\rm(iv)} $\textit{\textbf{N}}_3 \cap L^0 = \emptyset$, $\textit{\textbf{N}}_3 \cap L^+$ and $\textit{\textbf{N}}_3 \cap L^-$ are nonempty$;$\par
   {\rm (v)} for every $u\in S$, we have $\sigma_u u \in \textit{\textbf{N}}_3 \cap L^+$, $\tau u \in \textit{\textbf{N}}_3^{\,+}$ for $\tau \in (0, \sigma_u)$ and $\tau u \in \textit{\textbf{N}}_3^{\,-}$ for $\tau \in (\sigma_u, +\infty)${\rm ,} \par
  {\rm(vi)} for any $u\in L^-$, we have $\sigma_u u \in \textit{\textbf{N}}_3 \cap L^-$, $\tau u \in \textit{\textbf{N}}_3^{\,-}$ for $\tau \in (0, \sigma_u)$ and $\tau u \in \textit{\textbf{N}}_3^{\,+}$ for $\tau \in (\sigma_u, +\infty)${\rm ;}\par
   {\rm (vii)} $J(u)$ is bounded below on $\textit{\textbf{N}}_3 \cap L^-$, thus the number $d_3^{\,-} = \inf\limits_{u\in \textit{\textbf{N}}_3 \cap L^-} J(u)$ is well defined. Furthermore, we have
   $d_3^{\,-} <0$.\par
    {\rm\bf (3)} If $3<p<5$ and $\lambda \leq b\lambda_1$, then $\textit{\textbf{N}}_p$, $\textit{\textbf{W}}_p^{\,+}$, $\textit{\textbf{W}}_p^{\,-}$ are nonempty, and\par
     {\rm (viii)} for every $u\in H^1_0(\Omega) \setminus \{0\}$, there exists a unique number $\tau_u >0$ such that $J(\tau_u u) = \sup\limits_{\tau >0} J(\tau u)$, $\tau_u u \in \textit{\textbf{N}}_p$, $\tau u \in \textit{\textbf{N}}_p^{\,+}$ for $\tau \in (0, \tau_u)$ and $\tau u \in \textit{\textbf{N}}_p^{\,-}$ for $\tau \in (\tau_u, +\infty)${\rm ;}\par
  {\rm (ix)} $J(u)$ is bounded below on $\textit{\textbf{N}}_p$ and the number $d_p$ appearing in $\eqref{1.16}$ with $3<p<5$ is well defined. Moreover, we have $d_p > 0$.
   \end{prop}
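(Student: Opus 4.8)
The plan is to run all three parts through the one–variable fibering maps $j_u(\tau):=J(\tau u)$, $\tau>0$, with $u\in H^1_0(\Omega)\setminus\{0\}$ fixed. Differentiating \eqref{zh809-4} and comparing with \eqref{1.12} gives $\tau\,j_u'(\tau)=I(\tau u)$, so that $\tau u$ lies in $\textit{\textbf{N}}_p$, $\textit{\textbf{N}}_p^{\,+}$ or $\textit{\textbf{N}}_p^{\,-}$ exactly when $j_u'(\tau)$ is $0$, positive or negative, and interior critical points of $j_u$ sit on the Nehari manifold. I would first record the two shapes of $j_u'$: for $p=3$,
\[
 j_u'(\tau)=\tau\Big[\big(a\|\nabla u\|_2^4-\|u\|_4^4\big)\tau^2+\big(b\|\nabla u\|_2^2-\lambda\|u\|_2^2\big)\Big],
\]
a product of $\tau$ with an affine function of $\tau^2$ whose leading coefficient carries the sign separating $S$ from $S^c$ (cf.\ \eqref{3.5}); while for $3<p<5$,
\[
 j_u'(\tau)=\tau\Big[a\|\nabla u\|_2^4\,\tau^2+\big(b\|\nabla u\|_2^2-\lambda\|u\|_2^2\big)-\|u\|_{p+1}^{p+1}\,\tau^{p-1}\Big]=:\tau\,g_u(\tau),
\]
where $p-1>2$ forces $g_u'$ to change sign only once (from $+$ to $-$), with $g_u(0^+)=b\|\nabla u\|_2^2-\lambda\|u\|_2^2\ge b_0\|\nabla u\|_2^2\ge 0$ by \eqref{1.6} and $g_u(\tau)\to-\infty$ as $\tau\to\infty$.

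For Part (1) ($\lambda<b\lambda_1$) and Part (3) ($\lambda\le b\lambda_1$) the argument is then one–variable calculus. In Part (1) the constant term of the bracket is $\ge b_0\|\nabla u\|_2^2>0$, so on $S$ the bracket decreases in $\tau^2$ from a positive value and vanishes at the unique $\sigma_u$ of \eqref{3.7}, giving the monotonicity of $j_u$ and hence (i) (sup–characterization included), while on $S^c$ it stays positive, giving (ii). For (iii) I would use \eqref{1.13} with $p=3$, which on $\textit{\textbf{N}}_3$ reduces $J$ to $\tfrac14\big(b\|\nabla u\|_2^2-\lambda\|u\|_2^2\big)\ge\tfrac{b_0}{4}\|\nabla u\|_2^2$, combine $I(u)=0$ with $\|u\|_4^4\le\Lambda^{-1}\|\nabla u\|_2^4$ to get $\|\nabla u\|_2^2\ge b_0/(\Lambda^{-1}-a)>0$ on $\textit{\textbf{N}}_3$, and conclude $d_3\ge b_0^2/(4(\Lambda^{-1}-a))>0$. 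Nonemptiness comes from $\phi_\Lambda\in S$ (as $1-a\Lambda>0$), from $0\in\textit{\textbf{W}}_3^{\,+}$, and from $j_{\phi_\Lambda}(\tau)\to-\infty$ (so $\tau\phi_\Lambda\in\textit{\textbf{W}}_3^{\,-}$ for $\tau$ large). Part (3) is the same with $g_u$: its sign pattern produces a unique zero $\tau_u$ with $g_u>0$ on $(0,\tau_u)$ and $g_u<0$ after (the case $\lambda=b\lambda_1$, $u\in L^0$ being handled by factoring an extra $\tau^2$), proving (viii); then \eqref{1.13} gives $J(u)\ge\tfrac{a(p-3)}{4(p+1)}\|\nabla u\|_2^4$ on $\textit{\textbf{N}}_p$, while $I(u)=0$ and the $L^{p+1}$ Sobolev inequality bound $\|\nabla u\|_2$ below on $\textit{\textbf{N}}_p$, whence $d_p>0$; nonemptiness as before.

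For Part (2) the new difficulty is that $b\|\nabla u\|_2^2-\lambda\|u\|_2^2$ now changes sign and $L^-$ is a nontrivial cone around $\psi_1$. I would invoke Lemma~3.8 for the choice of $\delta$, which yields the quantitative separation $S\subseteq L^+$ and $L^0\cup L^-\subseteq\mathrm{int}(S^c)=\{\,\|u\|_4^4<a\|\nabla u\|_2^4\,\}$, with a margin uniform on the $\|\nabla\cdot\|_2$–unit sphere, and then repeat the ray–by–ray analysis: on $u\in S\subseteq L^+$ the bracket decreases from a positive value through $\sigma_u$, and $\sigma_u u\in L^+$ since $L^+$ is a cone (this is (v)); on $u\in L^-\subseteq\mathrm{int}(S^c)$ its constant term is negative and its leading coefficient positive, so it increases from a negative value through a single zero $\sigma_u$, which flips the roles of $\textit{\textbf{N}}_3^{\,+}$ and $\textit{\textbf{N}}_3^{\,-}$ and gives $\sigma_u u\in\textit{\textbf{N}}_3\cap L^-$ (this is (vi)). For (iv): on $L^0$ one has $I(u)=a\|\nabla u\|_2^4-\|u\|_4^4>0$, so $\textit{\textbf{N}}_3\cap L^0=\emptyset$, while $\sigma_{\phi_\Lambda}\phi_\Lambda$ and $\sigma_{\psi_1}\psi_1$ (using $b\|\nabla\psi_1\|_2^2-\lambda\|\psi_1\|_2^2=(b\lambda_1-\lambda)\|\psi_1\|_2^2<0$) populate $\textit{\textbf{N}}_3\cap L^+$ and $\textit{\textbf{N}}_3\cap L^-$. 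For (vii): \eqref{1.13} gives $J(u)=\tfrac14\big(b\|\nabla u\|_2^2-\lambda\|u\|_2^2\big)<0$ on $\textit{\textbf{N}}_3\cap L^-$, so $d_3^{\,-}<0$; and writing $u=\sigma_v v$ with $\|\nabla v\|_2=1$, the uniform margin keeps $\sigma_v^2=\big(b-\lambda\|v\|_2^2\big)/\big(\|v\|_4^4-a\big)$ bounded (numerator in the fixed interval $[\,b-\lambda/\lambda_1,\,0\,)$, denominator bounded away from $0$), so $\|\nabla u\|_2$ is bounded on $\textit{\textbf{N}}_3\cap L^-$ and $J$ is bounded below there.

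The one–variable calculus in Parts (1) and (3) is routine; the real obstacle is Part (2), which stands or falls with the quantitative Sobolev/spectral separation of $L^-$ from $S$ encapsulated in Lemma~3.8. The crux is the uniform bound of $\|v\|_4^4-a$ away from $0$ over the relevant directions $v$ — precisely what bounds $\|\nabla u\|_2$ on $\textit{\textbf{N}}_3\cap L^-$ in (vii), and it forces $a$ to lie in a suitable nonempty subrange of $(0,1/\Lambda)$ (of the form $\|\psi_1\|_4^4/\|\nabla\psi_1\|_2^4<a<1/\Lambda$), with $\delta$ correspondingly small; establishing this (and its compatibility, as in Section 6) is where the work lies, after which the Proposition follows from the fibering–map bookkeeping above.
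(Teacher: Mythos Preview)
Your proposal is correct and follows essentially the same fibering-map approach as the paper (Lemmas~3.1 and~4.1 for the ray analysis, Lemmas~3.4 and~4.4 plus Propositions~3.5 and~4.5 for $d_p>0$, and Lemma~3.8, Proposition~3.9, Lemma~3.10 for Part~(2)). The only cosmetic difference is in (vii): the paper proves $\textit{\textbf{N}}_3\cap L^-$ is bounded via a direct contradiction argument on a sequence with $\|\nabla u_n\|_2\to\infty$, whereas you phrase it as a uniform lower bound on $|\,\|v\|_4^4-a\,|$ over normalized directions $v\in L^-$ --- both arguments rest on the separation $\overline{L^-}\cap\overline{S}=\{0\}$ of Lemma~3.8 combined with weak compactness, which you correctly identify as the crux.
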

  The detailed statement and proof of Proposition 1.1: {\bf (1)} are contained in Proposition 3.2, Proposition 3.5 and Theorem 3.7, {\bf (2)} are contained in Proposition 3.9, Lemma 3.10 and Lemma 3.11, and {\bf (3)} are contained in Proposition 4.2, Proposition 4.5 and Theorem 4.6. Our results also reveal some interesting relations between the value of $a$, $b$, $\lam$ and the size of $\textit{\textbf{N}}_p^{\,+}$, $\textit{\textbf{N}}_p^{\,-}$ for $3\leq p <5$; see Remark 3.3 and Remark 4.3. \par

  To conclude this subsection, we list some facts which will be used frequently in proving the main results.

  (i)  If $3\leq p<5$ and $\lambda \leq b\lambda_1$, thanks to \eqref{1.7} and \eqref{1.14} we have
  \begin{eqnarray}\label{1.17}
    \begin{array}{l}
       E(0)\geq 0 \quad {\rm for\ any}\ u_0 \in \textit{\textbf{N}}_{p} ^{\, +}\ {\rm and}\  u_1 \in L^2(\Omega),\\
       E(0)> 0 \quad {\rm for\ any}\ u_0 \in \textit{\textbf{N}}_{p} ^{\, +}{\rm ,}\  u_1 \in L^2(\Omega)\ {\rm and}\ \int_\Omega u_0 u_1 \dif x > 0.
    \end{array}
  \end{eqnarray}

  (ii) If $p=3$, $\lam \leq b\lam_1$ and $a \geq 1/\Lambda$, then recalling \eqref{1.12} and the definition of $\Lambda$, we have $I(u_0) \geq 0$ and $E(0) \geq 0$ for any $u_0 \in H_0^1(\Omega)$.\par
  (iii) If $\textit{\textbf{N}}_3 \cup \textit{\textbf{N}}_3^{\,-} \neq \emptyset$ and $\lam < b\lam_1$, then $a<1/\Lambda$, and
   \begin{eqnarray}\label{1.18}
    a\|\nabla u\|_2^4 - \|u\|_{4}^{4}  <0\quad {\rm for}\  u\in {\textit{\textbf{N}}_3} \cup {\textit{\textbf{N}}_3^{\,-}} .
   \end{eqnarray}

   In fact,for $u\in {\textit{\textbf{N}}_3} \cup {\textit{\textbf{N}}_3^{\,-}}$, we have $\|\nabla u\|_2 \neq 0$ and by \eqref{1.6},
   \begin{eqnarray*}
    a\|\nabla u\|_2^4 - \|u\|_{4}^{4} = I(u) - \left(b\|\nabla u\|_2^2 - \lambda \|u\|_2^2\right) \leq - b_0 \|\nabla u\|_2^2 <0.
   \end{eqnarray*}
  Thus \eqref{1.18} holds. Furthermore, noting that
  \begin{eqnarray}\label{1.19}
    \|u\|_{4}^{4} \leq \frac{1}{\Lambda} \|\nabla u\|_2^4,
  \end{eqnarray}
  then combing with \eqref{1.18} and \eqref{1.19}, we have
    \begin{eqnarray*}
     \left(a-\frac{1}{\Lambda}\right)\|\nabla u\|_2^4 \leq a\|\nabla u\|_2^4 - \|u\|_{4}^{4}  < 0,
    \end{eqnarray*}
   which leads to $a<1/\Lambda$ by virtue of $\|\nabla u\|_2 \neq 0$.   \hfill $\Box$

  \section{Main results}
  \setcounter{equation}{0}
  \label{} \ \ \ \ \   Recall the local existence result established by Ikehata and Okazawa \cite{IO}, we know that for any $\lam \in \mathbb{R}$, $p>1$ and the initial data $(u_0, u_1)$ satisfying \eqref{1.2}, the problem \eqref{zh809-1} admits a unique solution $u\in H(T)$ for some $T>0$, where
  \begin{eqnarray}\label{2.1}
      H(T) = C([0,T);H_0^1(\Omega)\cap H^2(\Omega)) \cap C^1([0,T);H_0^1(\Omega)) \cap C^2([0,T);L^2(\Omega)).
  \end{eqnarray}
    Furthermore, if the maximal existence time $T_{{\rm{max}}}$ of the solution $u = u(t)$ for \eqref{zh809-1} is finite, where we denote the function $u(t,x)$ by $u(t)$ for simplicity, then
   \begin{eqnarray*}
  \lim\limits_{t\rightarrow T_{{\rm{max}}}-} \left[\|\Delta u(t)\|_2 + \|\nabla u_t(t)\|_2 \right] = +\infty,
   \end{eqnarray*}
  and we say that $u$ blows up at time $T_{{\rm{max}}}$.

\subsection{Invariant sets under the flow of \eqref{zh809-1}}
 \label{} \ \ \ \ \  At first, we prove the invariance of some sets under the flow of \eqref{zh809-1}, which plays an important role in showing the blow-up behavior of the solution for \eqref{zh809-1} with $3\leq p<5$.
\begin{thm}
   Let $u \in H(T_{{\rm{max}}})$ be the unique solution of $\eqref{zh809-1}$ with initial data $(u_0, u_1)$ satisfying $\eqref{1.2}$, where $T_{{\rm{max}}}$ is the maximal existence time of $u(t)$.\\[0.3em]
  $\bullet$  Case A$:$ Let $p=3$, and assume that $0<a<1/\Lambda$, $\lam < b\lam_1$, $E(0)<d_3$, then we have{\rm :}\par
   $(A1)$ $u(t) \in \textit{\textbf{W}}_3^{\,+}$ for all $t \in [0, T_{{\rm{max}}}),$ provided that $u_0 \in \textit{\textbf{W}}_{3} ^{\, +}${\rm ;}\par
    $(A2)$ $u(t) \in \textit{\textbf{W}}_3^{\,-}$ for all $t \in [0, T_{{\rm{max}}}),$ provided that $u_0 \in \textit{\textbf{W}}_{3} ^{\, -}$.\\[0.3em]
  $\bullet$  Case B$:$ Let $p=3$, and assume that $0<a<1/\Lambda$, $b\lam_1 <\lam < b\lam_1 + \delta$, where $\delta >0$ is the number determined by Lemma $3.8$,
 then we have $u(t) \in \textit{\textbf{N}}_3^{\,-} \cap J^{d_3^{\,-}}$ for all $t \in [0, T_{{\rm{max}}}),$ provided that $u_0 \in \textit{\textbf{N}}_3^{\,-} \cap J^{d_3^{\,-}}$ and $E(0) < d_3^{\,-}$.\\
 $\bullet$  Case C$:$ Let $3<p<5$, and assume that $\lam \leq b\lam_1$, $E(0)< d_p$, then we have$:$\par
  $(C1)$ $u(t) \in \textit{\textbf{W}}_p^{\,+}$ for all $t \in [0, T_{{\rm{max}}}),$ provided that $u_0 \in \textit{\textbf{W}}_{p} ^{\, +}${\rm ;}\par
   $(C2)$ $u(t) \in \textit{\textbf{W}}_p^{\,-}$ for all $t \in [0, T_{{\rm{max}}}),$ provided that $u_0 \in \textit{\textbf{W}}_{p} ^{\, -}$.
 \end{thm}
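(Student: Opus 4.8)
The plan is to establish invariance by a continuity/contradiction argument based on the conservation of energy and the sign of the Nehari functional $I(u(t))$ along the flow. First I would record the energy identity for solutions in $H(T_{\max})$: multiplying \eqref{zh809-1} by $u_t$ and integrating over $\Omega$ gives $E(t) = E(0)$ for all $t\in[0,T_{\max})$, hence $J(u(t)) \le E(t) = E(0) < d_p$ throughout. This shows $u(t)\in J^{d_p}$ (respectively $J^{d_3^{\,-}}$ in Case B) automatically for all $t$, so the entire content of the claims reduces to showing that the sign condition defining $\textit{\textbf{N}}_p^{\,+}$ (or $\textit{\textbf{N}}_p^{\,-}$, or $\textit{\textbf{N}}_3^{\,-}$) is preserved. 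The key point is that the boundary set $\textit{\textbf{N}}_p$ cannot be reached: if $u(t^\ast)\in\textit{\textbf{N}}_p$ for some $t^\ast$, i.e. $I(u(t^\ast))=0$ with $u(t^\ast)\ne 0$, then $J(u(t^\ast))\ge d_p$ by the definition \eqref{1.16} of $d_p$, contradicting $J(u(t^\ast))<d_p$; the case $u(t^\ast)=0$ is excluded because then \eqref{1.13} (or direct substitution) forces $J(u(t^\ast))=0$, which is impossible since $d_p>0$ (in Case B, $d_3^{\,-}<0$, and $u(t^\ast)=0$ gives $J=0>d_3^{\,-}$, again a contradiction).

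Next I would run the continuity argument. Since $u\in C([0,T_{\max});H^1_0(\Omega))$ and $I:H^1_0(\Omega)\to\mathbb{R}$ is continuous, the map $t\mapsto I(u(t))$ is continuous on $[0,T_{\max})$. In the stable case (A1) and (C1): $u_0\in\textit{\textbf{W}}_p^{\,+}$ means either $u_0=0$ (then the solution behaviour is governed by the energy being below $d_p$ and one checks $0$ stays in $\textit{\textbf{W}}_p^{\,+}$ trivially, or more precisely $I(u_0)>0$ fails only at $0$) or $I(u_0)>0$. If the conclusion failed there would be a first time $t^\ast\in(0,T_{\max})$ with $I(u(t^\ast))=0$ (the value $I(u(t))$ cannot jump from positive to negative without vanishing, by the intermediate value theorem), and $u(t^\ast)\ne 0$ since on $[0,t^\ast]$ we have $I(u(t))\ge 0$; but then $u(t^\ast)\in\textit{\textbf{N}}_p$ with $J(u(t^\ast))<d_p$, contradicting the displayed inequality $\inf_{\textit{\textbf{N}}_p}J=d_p$ from step one. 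Hence $I(u(t))>0$ (or $u(t)=0$) for all $t$, i.e. $u(t)\in\textit{\textbf{W}}_p^{\,+}$. The unstable cases (A2), (C2), and Case B are symmetric: starting from $I(u_0)<0$, a first crossing time $t^\ast$ would give $u(t^\ast)\in\textit{\textbf{N}}_p$ (or $\textit{\textbf{N}}_3\cap L^-$), again with $J(u(t^\ast))<d_p$ (resp. $<d_3^{\,-}$), contradicting the infimum characterisation; so $I(u(t))<0$ persists. In Case B one additionally needs $u(t)$ to remain in $L^-$: here I would invoke Proposition 1.1 (2), specifically statement (vi) together with the structure near $\lambda=b\lambda_1$, to argue that the connected component of $\textit{\textbf{N}}_3^{\,-}$ reachable from $u_0\in\textit{\textbf{N}}_3^{\,-}\cap J^{d_3^{\,-}}$ lies in $L^-$ — equivalently, that on the sublevel set $J^{d_3^{\,-}}$ the sets $\textit{\textbf{N}}_3^{\,-}\cap L^+$ and $\textit{\textbf{N}}_3^{\,-}\cap L^-$ are separated, which follows because crossing from $L^-$ to $L^+$ would pass through $L^0$, and one shows via Lemma 3.8 and Lemma 3.11 that $J>d_3^{\,-}$ on the relevant portion of $L^0\setminus\{0\}$ and near it.

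I expect the main obstacle to be Case B, not the $p\ge 3$ stable/unstable dichotomy in Cases A and C, which is the classical potential-well mechanism. The difficulty in Case B is twofold: first, $b_0=0$ at $\lambda=b\lambda_1$ degenerates the coercivity estimate \eqref{1.6}, so one cannot use the clean lower bounds that make $\textit{\textbf{N}}_3^{\,-}$ ``far from'' $\textit{\textbf{N}}_3$; second, the set $\textit{\textbf{N}}_3^{\,-}$ is no longer connected in a simple way — part of it sits over $L^+$ and part over $L^-$ (compare (v) and (vi) of Proposition 1.1), so merely controlling the sign of $I(u(t))$ is insufficient, one must simultaneously track membership in $L^-$. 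The resolution is to use that $d_3^{\,-}<0$ while $J\ge 0$ on $\textit{\textbf{N}}_3^{\,-}\cap L^+$ — roughly, for $u\in L^+$ the positive quadratic-form contribution $b\|\nabla u\|_2^2-\lambda\|u\|_2^2>0$ combined with the $\|\nabla u\|_2^4$ term keeps $J$ from going negative along $\textit{\textbf{N}}_3$ — so the sublevel set $J^{d_3^{\,-}}$ genuinely separates the two pieces of $\textit{\textbf{N}}_3^{\,-}$, and the first-crossing argument then forbids ever leaving $\textit{\textbf{N}}_3^{\,-}\cap L^-$. I would extract the precise inequality $J(u)\ge 0$ on $\textit{\textbf{N}}_3\cap L^+$ (or the sharper statement needed) from the computations behind Proposition 3.9–Lemma 3.11 and present it as the one nontrivial lemma; everything else is the continuity/contradiction scheme above applied verbatim.
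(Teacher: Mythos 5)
Your overall scheme --- energy conservation gives $J(u(t))\le E(t)=E(0)<d_p$ for all $t$, so the only way to leave the invariant set is through a first time at which $I(u(t))=0$, which is then ruled out by the infimum characterization of $d_p$ --- is exactly the paper's argument. Your treatment of Case B also contains all the needed ingredients ($\textit{\textbf{N}}_3\cap L^0=\emptyset$; $J\ge d_3^{\,+}>0>d_3^{\,-}$ on $\textit{\textbf{N}}_3\cap L^+$ via Lemma 3.10/3.11; $J\ge d_3^{\,-}$ on $\textit{\textbf{N}}_3\cap L^-$ by definition; and $J(0)=0>d_3^{\,-}\ge J(u(t))$, a genuine contradiction there because $E(0)<d_3^{\,-}<0$); the additional worry about $u(t)$ remaining in $L^-$ and about connected components of $\textit{\textbf{N}}_3^{\,-}$ is unnecessary, since the claimed invariant set is only $\textit{\textbf{N}}_3^{\,-}\cap J^{d_3^{\,-}}$ and one need only track the sign of $I$.

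There is, however, a genuine gap in the unstable cases $(A2)$ and $(C2)$: you must exclude that the first crossing time $t_1$ satisfies $u(t_1)=0$. This matters because $0\notin\textit{\textbf{N}}_p$, so no contradiction with $\inf_{u\in\textit{\textbf{N}}_p}J(u)=d_p$ is available at such a point, while $0\in\textit{\textbf{N}}_p^{\,+}\cap J^{d_p}=\textit{\textbf{W}}_p^{\,+}$, so the trajectory would genuinely have left $\textit{\textbf{W}}_p^{\,-}$. Your stated reason --- that $u(t_1)=0$ forces $J(u(t_1))=0$, ``which is impossible since $d_p>0$'' --- is not a contradiction: the energy identity only yields $J(u(t))\le E(0)<d_p$, and in $(A2)$, $(C2)$ the initial energy is not assumed negative (indeed Theorem 2.6 $(h3)$ uses $0<E(0)<d_3$), so $J(u(t_1))=0<d_p$ is perfectly consistent. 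The missing ingredient is the coercivity-near-the-origin estimate of Lemma 3.4\,(2) (resp.\ Lemma 4.4\,(2)): $I(u)<0$ forces $\|\nabla u\|_2>\rho_3$ (resp.\ $\rho_p$), hence $\|\nabla u(t)\|_2>\rho_p$ on $[0,t_1)$ and by continuity $\|\nabla u(t_1)\|_2\ge\rho_p>0$; only then may one conclude $u(t_1)\in\textit{\textbf{N}}_p$ and invoke $J(u(t_1))\ge d_p$. This is precisely how the paper closes the argument, and without it the invariance of $\textit{\textbf{W}}_p^{\,-}$ is not established.
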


   In fact, under the cases (A1) and (C1), we deduce from \eqref{1.17} that the initial energy $E(0)$ must be non-negative when $u_0$ lies within the potential well $\textit{\textbf{W}}_{p} ^{\, +}$.

 The proof of Theorem 2.1 relies heavily upon the behavior of $J(u)$ and geometry of $\textit{\textbf{N}}_p$, $\textit{\textbf{N}}_p^{\,+}$ and $\textit{\textbf{N}}_p^{\,-}$ for $3\leq p <5$, which are described by Lemma 3.4 and Proposition 3.5 for Case $A$, Lemma 3.11 for Case $B$ and Lemma 4.4, Proposition 4.5 for Case $C$ respectively.

\subsection{The 4-sublinear case}

 \label{} \ \ \ \ \  The following result asserts that the solution for \eqref{zh809-1} with $1<p<3$ is bounded uniformly in time and there exists a vacuum region such that \eqref{zh809-1} admits no solution in it.

 \begin{thm}
     $($Boundedness of the solution in the case $1<p<3$$)$ Suppose that $1<p<3$, $\lambda \in \mathbb{R}$, $a$, $b>0$, and let $u \in H(T_{{\rm{max}}})$ be a solution of $\eqref{zh809-1}$ with $(u_0, u_1)$ satisfying $\eqref{1.2}$, where $T_{{\rm{max}}}$ is the maximal existence time for $\eqref{zh809-1}$. Then, there exists a constant $K_1>0$ such that
       \begin{eqnarray}\label{2.2}
         \|u_t (t)\|_2 + \|\nabla u(t)\|_2 \leq K_1\qquad for\ all\ \ t\in [0, T_{{\rm{max}}}),
       \end{eqnarray}

     Moreover, if $a>0$ is sufficiently small, $\lam <b\lam_1$, and we assume that $E(0)\leq 0$, $\|\nabla u_0\|_2 >0$, then
     \begin{eqnarray}\label{2.3}
      \|\nabla u(t)\|_2 \geq \left(\frac{p+1}{2}b_0 S_{p+1}^{(p+1)/2}\right)^{1/(p-1)}\qquad for\ all\ \ t\in [0, T_{{\rm{max}}}),
     \end{eqnarray}
     where $b_0$ is defined by $\eqref{1.7}$.
   \end{thm}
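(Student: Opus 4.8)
The plan is to derive both bounds from the energy identity $E(t)=E(0)$, which holds for $u\in H(T_{\mathrm{max}})$ by multiplying \eqref{zh809-1} by $u_t$ and integrating over $\Omega$. Writing out $E(t)$ and using the rewriting \eqref{1.13} for $J(u)$ with $p\in(1,3)$, the key observation is that the coefficient $\frac14-\frac{1}{p+1}$ of $\|\nabla u\|_2^4$ is \emph{negative} when $p<3$, while the remaining gradient and $L^2$ terms combine via \eqref{1.6} into $b\left(\frac12-\frac{1}{p+1}\right)\|\nabla u\|_2^2-\lambda\left(\frac12-\frac{1}{p+1}\right)\|u\|_2^2\ge \frac{(p-1)b_0}{2(p+1)}\|\nabla u\|_2^2$. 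This is not yet enough because of the negative quartic term, so instead I would work directly from $J(u)=\frac a4\|\nabla u\|_2^4+\frac b2\|\nabla u\|_2^2-\frac\lambda2\|u\|_2^2-\frac{1}{p+1}\|u\|_{p+1}^{p+1}$, bound $\frac\lambda2\|u\|_2^2$ and $\frac{1}{p+1}\|u\|_{p+1}^{p+1}$ by the Sobolev/Poincar\'e inequalities $\|u\|_2^2\le\lambda_1^{-1}\|\nabla u\|_2^2$ and $\|u\|_{p+1}^{p+1}\le S_{p+1}^{-(p+1)/2}\|\nabla u\|_2^{p+1}$, and use that $p+1<4$: the quartic term $\frac a4\|\nabla u\|_2^4$ dominates the subquartic growth $C\|\nabla u\|_2^{p+1}+C\|\nabla u\|_2^2$ for large $\|\nabla u\|_2$. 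Hence $J(u)\to+\infty$ as $\|\nabla u\|_2\to\infty$, i.e. $J$ is coercive on $H_0^1(\Omega)$, with a lower bound $J(u)\ge \frac a8\|\nabla u\|_2^4-C_0$ for a constant $C_0=C_0(a,b,\lambda,p,\Omega)$.

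From coercivity the first claim follows immediately: since $E(0)=E(t)=\frac12\|u_t(t)\|_2^2+J(u(t))\ge\frac12\|u_t(t)\|_2^2+\frac a8\|\nabla u(t)\|_2^4-C_0$, we get $\frac12\|u_t(t)\|_2^2+\frac a8\|\nabla u(t)\|_2^4\le E(0)+C_0$ for all $t\in[0,T_{\mathrm{max}})$, which yields \eqref{2.2} with an explicit $K_1$ depending on $E(0)$, $a$, $b$, $\lambda$, $p$ and $\Omega$. (One should note $E(0)$ is finite because $u_0\in H^2\cap H_0^1$, $u_1\in H_0^1$.)

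For the lower bound \eqref{2.3}, assume $E(0)\le0$, $\lambda<b\lambda_1$ (so $b_0>0$ by \eqref{1.7}), $\|\nabla u_0\|_2>0$, and $a$ small. From $E(t)=E(0)\le0$ and $\frac12\|u_t\|_2^2\ge0$ we get $J(u(t))\le0$, that is
$$\frac a4\|\nabla u\|_2^4+\frac b2\|\nabla u\|_2^2-\frac\lambda2\|u\|_2^2\le\frac{1}{p+1}\|u\|_{p+1}^{p+1}.$$
Using $\frac b2\|\nabla u\|_2^2-\frac\lambda2\|u\|_2^2\ge\frac{b_0}2\|\nabla u\|_2^2$ (from \eqref{1.6}) on the left, discarding the nonnegative term $\frac a4\|\nabla u\|_2^4$, and $\|u\|_{p+1}^{p+1}\le S_{p+1}^{-(p+1)/2}\|\nabla u\|_2^{p+1}$ on the right gives
$$\frac{b_0}{2}\|\nabla u\|_2^2\le\frac{1}{p+1}S_{p+1}^{-(p+1)/2}\|\nabla u\|_2^{p+1}.$$
Provided $\|\nabla u(t)\|_2>0$ we may divide by $\|\nabla u(t)\|_2^2$ and rearrange to obtain $\|\nabla u(t)\|_2^{p-1}\ge\frac{p+1}{2}b_0S_{p+1}^{(p+1)/2}$, which is \eqref{2.3}. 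It therefore remains only to rule out $\|\nabla u(t)\|_2=0$: a continuity argument works, since $\|\nabla u(0)\|_2=\|\nabla u_0\|_2>0$ and $t\mapsto\|\nabla u(t)\|_2$ is continuous on $[0,T_{\mathrm{max}})$ by \eqref{2.1}; if it vanished somewhere, let $t^*$ be the first such time, then on $[0,t^*)$ the displayed inequality forces $\|\nabla u\|_2$ bounded below away from $0$, contradicting $\|\nabla u(t^*)\|_2=0$ by continuity. (Alternatively: $J(u(t))\le0$ and $J(0)=0$ with $J>0$ in a punctured neighborhood of $0$ when $\lambda<b\lambda_1$, so $u(t)$ cannot reach $0$.)

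The main obstacle is not any single estimate — each is a one-line Sobolev/Poincar\'e application — but rather organizing the coercivity argument so that the negative quartic term in $J$ when $p<3$ is handled cleanly; the point to get right is that for $p<3$ one uses the \emph{positive} part $\frac a4\|\nabla u\|_2^4$ of $J$ to absorb the Sobolev nonlinearity (whose growth exponent $p+1$ is strictly below $4$), whereas the rewriting \eqref{1.13} has the wrong sign on the quartic coefficient and is not directly useful here. A minor subtlety is the role of ``$a$ sufficiently small'': it is not actually needed for \eqref{2.3} as derived above (we simply threw away $\frac a4\|\nabla u\|_2^4\ge0$), so either the hypothesis is used elsewhere in the companion vacuum statement (Remark~2.3) or it guarantees the region $\{E(0)\le0,\ \|\nabla u_0\|_2>0\}$ is nonempty; I would flag this and present the cleaner argument.
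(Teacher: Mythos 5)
Your proposal is correct and follows essentially the same route as the paper: coercivity of $J$ from the fact that $\frac a4\|\nabla u\|_2^4$ dominates the subquartic terms $\|\nabla u\|_2^{p+1}$ and $\|\nabla u\|_2^2$ when $1<p<3$ (the paper packages this as a one-variable function $\varphi(s)$ bounded below), then for \eqref{2.3} the same chain $J(u(t))\le 0$, \eqref{1.6}, discard the quartic term, Sobolev, and a continuity argument to exclude $\|\nabla u(t)\|_2=0$. Your observation that ``$a$ sufficiently small'' is only needed to guarantee nonempty initial data (not for the estimate itself) matches the paper's Remark 2.3(1) and Proposition 6.1.
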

 \begin{rmk}
 {\rm (1) The condition for $a>0$ to be sufficiently small plays an important role in proving the existence of initial data $(u_0, u_1)$ which lead to \eqref{2.3} (see Proposition 6.1).}

  {\rm (2)} Vacuum region of the solution$:$\par
      {\rm From \eqref{2.3}, we know that under the conditions of Theorem 1.1, there exists a vacuum region
      $$U_{\lam} = \left\{u\in H_0^1(\Omega):~0< \|\nabla u\|_2 < \left(\frac{p+1}{2}b_0 S_{p+1}^{(p+1)/2}\right)^{1/(p-1)}\right\},
      $$
   such that \eqref{zh809-1} admits no solution in $U_\lam$. In view of \eqref{1.7},
    we find that $U_\lam$ gets bigger and bigger as $\lam$ decreases, and as the limit case that $\lam \rightarrow 0$ we obtain
    $$U_{0} = \left\{u\in H_0^1(\Omega):~0< \|\nabla u\|_2 < \left(\frac{p+1}{2}b S_{p+1}^{(p+1)/2}\right)^{1/(p-1)}\right\}.
   $$

   We would like to mention that, in the case when $p\geq3$ we also obtain a vacuum region under some suitable assumptions on $a$, $b$, $\lam$ and the initial data of \eqref{zh809-1} (see  Theorem $2.6$ for $p=3$ and Theorem $2.10$ for $p>3$ respectively).\hfill $\Box$}
   \end{rmk}

\subsection{The asymptotically 4-linear case}

 \label{} \ \ \ \ \
    Next, we prove the following result concerning the boundedness and asymptotically behavior of the solution for \eqref{zh809-1} when $p=3$.

   \begin{thm}
     {\rm(}Boundedness of the solution in the case $p=3$, $\lam<b\lam_1)$ For $p=3$ and $b>0$, let $u \in H(T_{{\rm{max}}})$ be a solution of $\eqref{zh809-1}$ with $(u_0, u_1)$ satisfying $\eqref{1.2}$ and $\Lambda$ be the constant defined by \eqref{1.9}. Suppose that one of the following cases holds$:$\par
     $(H1)$ $a> 1/\Lambda$\ \  and\ \  $\lambda \in \mathbb{R}$,\par
     $(H2)$ $a= 1/\Lambda$\ \  and\ \  $\lambda < b\lambda_1$,\par
     $(H3)$ $0<a<1/\Lambda$,\ \ $\lambda < b\lambda_1$,\ \ $u_0 \in \textit{\textbf{W}}_{3} ^{\, +}$ and $E(0)<d_3$,\\
      then there exists a constant $K_2>0$ such that
       \begin{eqnarray}\label{2.4}
          \|u_t (t)\|_2 + \|\nabla u(t)\|_2 \leq K_2\qquad {\rm for}\ \ t\in [0, T_{{\rm{max}}}).
        \end{eqnarray}
 \label{} \ \ \ \   Furthermore, if $(H3)$ holds and assume in addition that
 \begin{eqnarray}\label{2.5}
      \| \psi_1\|_4^4 - a\|\nabla \psi_1\|_2^4 >0,
   \end{eqnarray}
 where $\psi_1$ is the positive principal eigenfunction associated with the principal eigenvalue $\lam_1$ of the Laplacian operator $-\Delta$,
  then we have
    \begin{eqnarray}\label{2.6}
  \|u_t (t)\|_2 + \|\nabla u(t)\|_2 \rightarrow 0,\ \ {\rm as}\ \lam \rightarrow b\lam_1, \quad {\rm for\ any}\ \ t\in [0, T_{{\rm{max}}}).
 \end{eqnarray}
   \end{thm}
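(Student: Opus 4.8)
The plan is to establish \eqref{2.4} by deriving a uniform-in-time bound on the energy-type quantity $\|u_t(t)\|_2^2 + \|\nabla u(t)\|_2^2$, starting from the conservation of energy $E(t) = E(0)$, which follows by multiplying \eqref{zh809-1} by $u_t$ and integrating over $\Omega$ (valid since $u \in H(T_{\rm max})$). Recall that
\[
E(t) = \frac{1}{2}\|u_t(t)\|_2^2 + \frac{a}{4}\|\nabla u(t)\|_2^4 + \frac{b}{2}\|\nabla u(t)\|_2^2 - \frac{\lambda}{2}\|u(t)\|_2^2 - \frac{1}{4}\|u(t)\|_4^4.
\]
The whole point is that the two negative terms can be absorbed into the positive ones under each of the hypotheses $(H1)$--$(H3)$. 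Using $\|u\|_4^4 \le \frac{1}{\Lambda}\|\nabla u\|_2^4$ from \eqref{1.19} and $\|u\|_2^2 \le \frac{1}{\lambda_1}\|\nabla u\|_2^2$, I would rewrite
\[
J(u) \ge \frac{1}{4}\Bigl(a - \frac{1}{\Lambda}\Bigr)\|\nabla u\|_2^4 + \frac{1}{2}\bigl(b\|\nabla u\|_2^2 - \lambda\|u\|_2^2\bigr) \ge \frac{1}{4}\Bigl(a - \frac{1}{\Lambda}\Bigr)\|\nabla u\|_2^4 + \frac{b_0}{2}\|\nabla u\|_2^2,
\]
where the last inequality uses \eqref{1.6}. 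Under $(H1)$ we have $a - 1/\Lambda > 0$ so both terms are nonnegative (and when $\lambda \ge b\lambda_1$ the middle term may be negative but is controlled by the quartic term for large $\|\nabla u\|_2$, and bounded for small $\|\nabla u\|_2$); under $(H2)$ the quartic term vanishes but $b_0 > 0$ since $\lambda < b\lambda_1$; hence in both cases $J(u) \ge c\|\nabla u\|_2^2 - C$ for constants $c > 0$, $C \ge 0$ depending only on $a,b,\lambda,\Lambda$. Combined with $E(t) = E(0)$ this gives $\frac{1}{2}\|u_t(t)\|_2^2 + c\|\nabla u(t)\|_2^2 \le E(0) + C =: K_2^2/2$, which yields \eqref{2.4} after taking square roots and adjusting constants.

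For case $(H3)$ the coercivity of $J$ on all of $H_0^1$ fails (since $a < 1/\Lambda$), so instead I would invoke the invariance of the stable set from Theorem 2.1(A1): $u_0 \in \textit{\textbf{W}}_3^{\,+}$ and $E(0) < d_3$ imply $u(t) \in \textit{\textbf{W}}_3^{\,+} = J^{d_3} \cap \textit{\textbf{N}}_3^{\,+}$ for all $t \in [0, T_{\rm max})$, i.e. $I(u(t)) \ge 0$ and $J(u(t)) < d_3$. Then I would use identity \eqref{1.13} with $p=3$:
\[
J(u) = \frac{1}{4}\bigl(b\|\nabla u\|_2^2 - \lambda\|u\|_2^2\bigr) + \frac{1}{4}I(u) \ge \frac{b_0}{4}\|\nabla u\|_2^2 + \frac{1}{4}I(u),
\]
so that on $\textit{\textbf{N}}_3^{\,+}$ one gets $\|\nabla u(t)\|_2^2 \le \frac{4}{b_0}J(u(t)) < \frac{4 d_3}{b_0}$, a uniform bound on the gradient; feeding this back into $E(t) = E(0)$ and the definition of $J$ bounds $\|u_t(t)\|_2^2$ as well, giving \eqref{2.4}.

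For the final asymptotic statement \eqref{2.6}, the strategy is to track the $\lambda$-dependence of $d_3$ and show $d_3 \to 0$ as $\lambda \to b\lambda_1$. Using the characterization $d_3 = \inf_{u \in \textit{\textbf{N}}_3} J(u)$ together with part (i) of Proposition 1.1, one has $d_3 \le \sup_{\tau > 0} J(\tau \psi_1)$ whenever $\psi_1 \in S$, which is exactly where hypothesis \eqref{2.5} is needed. A direct computation of $\sup_{\tau > 0} J(\tau\psi_1)$ for $p = 3$ — maximizing the quartic-in-$\tau$ polynomial $\frac{a}{4}\tau^4\|\nabla\psi_1\|_2^4 + \frac{1}{2}\tau^2(b\lambda_1 - \lambda)\|\psi_1\|_2^2 - \frac{\tau^4}{4}\|\psi_1\|_4^4$, noting the coefficient of $\tau^4$ is negative by \eqref{2.5} and the coefficient of $\tau^2$ is $(b\lambda_1 - \lambda)\|\psi_1\|_2^2 \to 0^+$ — shows the maximal value is $O\bigl((b\lambda_1 - \lambda)^2\bigr) \to 0$. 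Hence $d_3 \to 0$, and since $E(0) < d_3$ forces $E(0) \to$ something $\le 0$ while the stable-set bounds give $\|u_t(t)\|_2^2 + \|\nabla u(t)\|_2^2 \le C\, d_3 \to 0$ (using $b_0 \to 0$ carefully — here one must be quantitative, combining $\|\nabla u(t)\|_2^2 \le \frac{4 d_3}{b_0}$ with the explicit rate $d_3 = O(b_0^2)$ so the ratio stays bounded by $O(b_0) \to 0$), we conclude \eqref{2.6}.

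The main obstacle I anticipate is the last step: getting \eqref{2.6} requires a genuinely \emph{quantitative} estimate, not just $d_3 > 0$. One has $b_0 = b - \lambda/\lambda_1 = (b\lambda_1 - \lambda)/\lambda_1 \to 0$, and the naive bound $\|\nabla u(t)\|_2^2 \le 4d_3/b_0$ is a ratio of two quantities tending to zero — so everything hinges on proving the matching upper bound $d_3 = O(b_0^2)$ from the test function $\psi_1$, and on checking that hypothesis \eqref{2.5} is robust as $\lambda \to b\lambda_1$ (it is, since \eqref{2.5} does not involve $\lambda$). Care is also needed to ensure the solution still exists, i.e. that $T_{\rm max}$ does not shrink to $0$ as $\lambda \to b\lambda_1$; but the uniform bound \eqref{2.4} together with the blow-up alternative stated after \eqref{2.1} actually shows the opposite — the solution is global — so this is not a real difficulty once the a priori bound is in hand.
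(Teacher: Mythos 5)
Your proposal is correct and follows essentially the same route as the paper: coercivity of $J$ via \eqref{1.19} and \eqref{1.6} for $(H1)$--$(H2)$, invariance of $\textit{\textbf{W}}_3^{\,+}$ combined with \eqref{1.13} for $(H3)$, and the test function $\psi_1$ in the minimax characterization \eqref{3.11} to show $d_3 \rightarrow 0$ as $\lam \rightarrow b\lam_1$. Your quantitative remark that one needs $d_3 = O(b_0^2)$ to offset the division by $b_0 \rightarrow 0$ in the bound $\|\nabla u(t)\|_2^2 \leq 4d_3/b_0$ is exactly right, and in fact makes explicit a step that the paper's own proof of \eqref{2.6} leaves implicit.
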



  \begin{rmk}
    $(1)$ {\rm In Proposition 6.2, we prove the existence of initial data which satisfy the assumptions of Theorem 2.4 and give a sufficient condition which leads to \eqref{2.5}}.

$(2)$ {\rm The conditions of $\lam < b\lam_1$ and \eqref{2.5} play important roles in proving \eqref{2.6}. It is interesting to note that the behavior of the solution for \eqref{zh809-1} with $p=3$ depends heavily upon the sign of $\|\psi_1\|_4^4 - a\|\nabla \psi_1\|_2^4$ as $\lam$ varies. We consider the case when $\lam >b\lam_1$ and $\|\psi_1\|_4^4 - a\|\nabla \psi_1\|_2^4 <0$ in Theorem 2.8 below.} {\rm  \hfill $\Box$}
\end{rmk}

    Now we turn to the following two questions. We consider what happens when \par
    {\bf(i)} $\lam <b\lam_1$, $E(0)\not\in [0, d_3)$ or $u_0$ lies outside $\textit{\textbf{W}}_{3} ^{\, +}$ for $0<a<1/\Lambda$, $\lambda < b\lambda_1$,\par
    {\bf(ii)} $\lam >b\lam_1$ and $\|\psi_1\|_4^4 - a\|\nabla \psi_1\|_2^4 <0$.\par\medskip
      For (i), we obtain the blow-up property of the solution for \eqref{zh809-1} with arbitrary initial energy $E(0)$ and initial data starting in the unstable set.



  \begin{thm}
   {\rm(}Conditions for blow-up in the case $p=3$, $\lam <b\lam_1)$ Let $p=3$, $0<a<1/\Lambda$, $\lam<b\lam_1$, and let $u \in H(T_{{\rm{max}}})$ be a solution of $\eqref{zh809-1}$ with $(u_0, u_1)$ satisfying $\eqref{1.2}$. Suppose that
    \begin{eqnarray}\label{2.7}
      \left(\textit{\textbf{N}}_3 \cup \textit{\textbf{N}}_3^{\,-}\right) \cap H^2(\Omega) \neq \emptyset,
    \end{eqnarray}
    and one of the following cases holds$:$\par
    $(h1)$ $E(0)<0$,\par
    $(h2)$ $E(0)=0$ and assume in addition that
       \begin{eqnarray}\label{2.8}
       \int_\Omega u_0 u_1 \dif x > 0,
       \end{eqnarray}

    $(h3)$ $0< E(0) <d_3$\   and $u_0 \in \textit{\textbf{W}}_3^{\,-}$,\par
    $(h4)$ $E(0) \geq d_3$,\ \  $\eqref{2.8}$ holds,\  and
      \begin{eqnarray}\label{2.9}
      \|u_0\|_2^2 > \frac{4}{b_0\lambda_1}E(0),
      \end{eqnarray}
    where $b_0$ is the constant appearing in $\eqref{1.7}$.\par
   Then the maximal existence time $T_{{\rm{max}}}$ of $u$ is finite.
   Moreover, we have
   \begin{eqnarray}\label{2.10}
     \|\nabla u(t)\|_2^2 \geq \frac{2 b_0 \Lambda}{1- a \Lambda}, \quad for\ all \ \ t\in [0, T_{{\rm{max}}}),
   \end{eqnarray}
   provided that $E(0) \leq 0$ and $\|\nabla u_0\|_2 >0$.
  \end{thm}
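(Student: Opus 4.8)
The strategy is the classical concavity/convexity method of Levine, adapted to the nonlocal Kirchhoff setting, combined with the invariant-set information from Theorem 2.1 Case A. I would work with the functional
\[
M(t) = \|u(t)\|_2^2 + \int_0^t \|u(s)\|_2^2\,\dif s\quad\text{or, in the cases where }E(0)>0,\quad M(t)=\|u(t)\|_2^2,
\]
or a shifted variant $M(t)=\|u(t)\|_2^2+\beta(t+\tau)^2$ tuned to absorb a positive initial energy, and aim to show that $M^{-\gamma}$ is concave for a suitable $\gamma>0$, which forces $M$ to blow up in finite time; since $\|u(t)\|_2^2\le C\|\nabla u(t)\|_2^2$ by Poincaré, this gives $T_{\max}<\infty$ via the blow-up alternative stated after \eqref{2.1}. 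The core computation is the differential inequality $M M'' - (1+\gamma)(M')^2 \ge (\text{nonnegative or controllable remainder})$, which reduces via $M''=2\|u_t\|_2^2 - 2I(u) + (\text{lower order from the }L^2\text{-integral term})$ and the Cauchy–Schwarz bound $(M')^2 \le 4 M \|u_t\|_2^2$ (plus cross terms) to needing a lower bound on $-I(u(t))$ proportional to the energy plus $\|u_t\|_2^2$.

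The heart of the matter is therefore a uniform estimate of the form $-I(u(t)) \ge c\big(\|u_t\|_2^2 + \text{something positive}\big) - (\text{const})E(0)$, uniformly on $[0,T_{\max})$, valid in each of the four regimes $(h1)$–$(h4)$. First I would record the energy identity $E(t)=E(0)$ (from multiplying \eqref{zh809-1} by $u_t$), and the identity for $\frac{1}{2}\frac{\dif^2}{\dif t^2}\|u\|_2^2 = \|u_t\|_2^2 - I(u(t))$. Then, using \eqref{1.13} with $p=3$,
\[
J(u) = \frac{a}{4}\|\nabla u\|_2^4 + \frac{1}{4}\big(b\|\nabla u\|_2^2 - \lambda\|u\|_2^2\big) + \frac14 I(u) \quad\Longrightarrow\quad -I(u) = 4E(0) - 2\|u_t\|_2^2 - a\|\nabla u\|_2^4 - \big(b\|\nabla u\|_2^2-\lambda\|u\|_2^2\big) + 2\|u_t\|_2^2,
\]
wait — more carefully, $-I(u(t)) = 4J(u) - a\|\nabla u\|_2^4 - (b\|\nabla u\|_2^2-\lambda\|u\|_2^2) = 4E(0) - 2\|u_t\|_2^2 - a\|\nabla u\|_2^4 - b_0'\|\nabla u\|_2^2$ where the last two terms are negative of a positive quantity when $\lambda<b\lambda_1$; so $-I(u(t)) \le 4E(0)-2\|u_t\|_2^2$, which is the wrong direction. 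The correct route is instead to show that the sign condition $I(u(t))<0$ is propagated (this is exactly invariance of $\textit{\textbf{N}}_3^{\,-}$ from Theorem 2.1 (A2) in case $(h3)$, and must be proved directly in cases $(h1),(h2),(h4)$ by a continuity argument: if $I$ first vanished at time $t_0$ then $u(t_0)\in\textit{\textbf{N}}_3$ so $J(u(t_0))\ge d_3$, contradicting $E(0)<d_3$ in $(h3)$, while in $(h1),(h2)$ one uses $E(0)\le 0 < d_3$ together with hypothesis \eqref{2.7} guaranteeing $\textit{\textbf{N}}_3$ is not degenerate, and in $(h4)$ one combines \eqref{2.9} with the energy identity), and then to get the quantitative gap one uses \eqref{1.18}: for $u\in\textit{\textbf{N}}_3^{\,-}$, $-I(u) = \|u\|_4^4 - a\|\nabla u\|_2^4 + (\lambda\|u\|_2^2 - b\|\nabla u\|_2^2)$ combined with $\|u\|_4^4 - a\|\nabla u\|_2^4>0$ gives $-I(u) \ge$ a positive multiple of $(1-a\Lambda)\|\nabla u\|_2^4/\Lambda - b_0\|\nabla u\|_2^2$, so once $\|\nabla u\|_2^2$ is bounded below (which is lemma \eqref{2.10}, proved from $E(0)\le0$ by $0\ge E(0)\ge \frac14(a-\tfrac1\Lambda)\|\nabla u\|_2^4 + \frac{b_0}{2}\|\nabla u\|_2^2 + \ldots$) one extracts the needed superlinear term.

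The plan for \eqref{2.10} specifically: from \eqref{1.14} with $p=3$, $E(0) = \frac12\|u_1\|_2^2 + 0\cdot\|\nabla u_0\|_2^4\cdot\frac{p-3}{\ldots} + \frac{b_0}{4}\|\nabla u_0\|_2^2 + \frac14 I(u_0)$ — wait, at $p=3$ the quartic term's coefficient $\frac{a(p-3)}{4(p+1)}$ vanishes, so I must instead use the raw definition $E(0)=\frac12\|u_1\|_2^2 + \frac a4\|\nabla u_0\|_2^4 + \frac12(b\|\nabla u_0\|_2^2 - \lambda\|u_0\|_2^2) - \frac14\|u_0\|_4^4$ together with $u_0\in\textit{\textbf{N}}_3\cup\textit{\textbf{N}}_3^{\,-}$, i.e. $\|u_0\|_4^4 \ge a\|\nabla u_0\|_2^4 + b\|\nabla u_0\|_2^2 - \lambda\|u_0\|_2^2$, to get $E(0) \ge \frac12\|u_1\|_2^2 + \frac a4\|\nabla u_0\|_2^4 + \frac14(b\|\nabla u_0\|_2^2-\lambda\|u_0\|_2^2) - \frac14\|u_0\|_4^4$; combining with $\|u_0\|_4^4 \le \Lambda^{-1}\|\nabla u_0\|_2^4$ and propagating $I<0$ along the flow, the bound $\|\nabla u(t)\|_2^2 \ge \frac{2b_0\Lambda}{1-a\Lambda}$ comes out of requiring $E(t)=E(0)\le 0$. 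The main obstacle I anticipate is handling case $(h4)$ with arbitrarily large $E(0)\ge d_3$: here one cannot use $E(0)<d_3$ to control the sign of $I$, so the argument must hinge entirely on the auxiliary function $\|u(t)\|_2^2$ — one shows $\frac{\dif}{\dif t}\|u(t)\|_2^2 = 2\int u u_t$ is increasing as long as $\|u\|_2^2 > \frac{4}{b_0\lambda_1}E(0)$ (because $\frac12\frac{\dif^2}{\dif t^2}\|u\|_2^2 = \|u_t\|_2^2 - I(u) \ge \|u_t\|_2^2 + b_0\|\nabla u\|_2^2 - 4E(0) + \ldots \ge \frac{b_0\lambda_1}{?}\|u\|_2^2 - 4E(0) > 0$ after using \eqref{1.6} and throwing away the nonlocal quartic term), hence by \eqref{2.8} the quantity $\|u\|_2^2$ is strictly increasing and stays in the region where the concavity estimate is available — so \eqref{2.9} is exactly the threshold making the whole argument self-consistent, and checking this carefully (including that the differential inequality for $M^{-\gamma}$ genuinely closes with the correct $\gamma$ depending on $p=3$) is where the real work lies.
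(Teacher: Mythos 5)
Your plan is essentially the paper's own proof: the concavity argument for $M(t)=\|u(t)\|_2^2$ (showing $(M^{-1/2})''\le 0$ and $(M^{-1/2})'<0$), the propagation of $I(u(t))<0$ via the invariance of $\textit{\textbf{W}}_3^{\,-}$ in case $(h3)$ and via the continuity/threshold argument built on \eqref{2.8}--\eqref{2.9} and the energy identity in case $(h4)$, and the derivation of \eqref{2.10} from $E(t)=E(0)\le 0$ together with $\|u\|_4^4\le\Lambda^{-1}\|\nabla u\|_2^4$ and a continuity argument, all match Section 5.3. The only cosmetic point is that for \eqref{2.10} you should work directly with the raw energy (keeping the coefficient $\tfrac12$ on $b\|\nabla u\|_2^2-\lambda\|u\|_2^2$ rather than passing through the Nehari inequality, which costs a factor and would yield $b_0\Lambda/(1-a\Lambda)$ instead of the stated $2b_0\Lambda/(1-a\Lambda)$), exactly as in the paper's \eqref{5.18}.
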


    \begin{rmk}
      {\rm (1) The assumptions of Theorem 2.6 $(h1)$, $(h2)$ lead to the fact that $u_0 \in \textit{\textbf{W}}_3^{\,-}$, and $(h4)$ implies} $u_0 \in \textit{\textbf{N}}_3^{\,-}$.\par
  {\rm Indeed, if $(h1)$ or $(h2)$ satisfies, then by \eqref{1.11} and \eqref{1.14} for $p=3$, we obtain that $J(u_0) <0$ and $I(u_0)<0$. Hence $u_0 \in \textit{\textbf{W}}_3^{\,-}$.\par
     If $(h4)$ satisfies, then by \eqref{1.14} for $p=3$, we have
    $$I(u_0) \leq 4E(0) - b_0 \lam_1 \|u_0\|_2^2 <0,
    $$
  which implies} $u_0 \in \textit{\textbf{N}}_3^{\,-}$. \par
  {\rm  (2) For $p=3$ and suppose that $0<a<1/\Lambda$, $\lam <b\lam_1$, then \eqref{2.10} shows that \eqref{zh809-1} with $(u_0, u_1)$ satisfying \eqref{1.2}, $E(0)\leq 0$ and $\|\nabla u_0\|_2 >0$ admits no solution in the ball}
  $$ U= \left\{u\in H_0^1(\Omega):~\|\nabla u\|_2^2 < \frac{2 b_0 \Lambda}{1- a \Lambda}\right\}.
  $$

  {\rm(3) By Proposition 6.3 below, we know that the hypothesis of \eqref{2.7} ensures the existence of initial data which satisfy $\eqref{1.2}$ and each of the conditions of $(h1)$-$(h4)$ in Theorem $2.6$. If \eqref{2.7} does not hold, then for any initial data $(u_0, u_1)$ satisfying $\eqref{1.2}$, we deduce that $u_0 \in \textit{\textbf{N}}_3^{\,+}$. Hence, \eqref{2.7} is vital for us to study the blow-up phenomenon of \eqref{zh809-1}}.\hfill $\Box$
  \end{rmk}

 Next, we study the question (ii): the case when $\lam > b\lam_1$ and $\|\psi_1\|_4^4 - a\|\nabla \psi_1\|_2^4 <0$.  We have:
 \begin{thm}
   {\rm(}Conditions for blow-up in the case $p=3$, $\lam >b\lam_1)$ Let $p=3$, $0<a<1/\Lambda$ and suppose that
    \begin{eqnarray}\label{2.11}
     \| \psi_1\|_4^4 - a\|\nabla \psi_1\|_2^4 &<& 0,\end{eqnarray}
      \begin{eqnarray}\label{2.12}
       \left(L^+ \cap \textit{\textbf{N}}_3 \right) \cap H^2(\Omega) &\neq& \emptyset.
    \end{eqnarray}
    Let $u \in H(T_{{\rm{max}}})$ be a solution of $\eqref{zh809-1}$ with $(u_0, u_1)$ satisfying $\eqref{1.2}$, $u_0 \in \textit{\textbf{N}}_{3} ^{\, -}$ and $E(0) <d_3^-$, where
    \begin{eqnarray}\label{2.13}
      d_3^- = \inf\limits_{u \in \textit{\textbf{N}}_3 \cap L^-} J(u).
    \end{eqnarray}
       Then there exists $\delta >0$ such that for any $\lam \in (b\lam_1,~b\lam_1 + \delta)$, the maximal existence time $T_{{\rm{max}}}$ of $u$ is finite.
    \end{thm}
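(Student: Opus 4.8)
The plan is to run a concavity (Levine-type) argument on a suitable auxiliary functional, after first establishing that the solution cannot leave the unstable set $\textit{\textbf{N}}_3^{\,-}\cap L^-$ where $J$ stays below the negative level $d_3^{\,-}$. The starting point is the invariance furnished by Case B of Theorem 2.1: choosing $\delta>0$ as in Lemma 3.8 and noting that under \eqref{2.11}--\eqref{2.12} the set $\textit{\textbf{N}}_3^{\,-}\cap J^{d_3^{\,-}}$ is nonempty and contains the initial datum $u_0$ (here I would use that $u_0\in\textit{\textbf{N}}_3^{\,-}$, $E(0)<d_3^{\,-}$ force $J(u_0)\le E(0)<d_3^{\,-}$, so $u_0\in J^{d_3^{\,-}}$), one gets $u(t)\in\textit{\textbf{N}}_3^{\,-}\cap J^{d_3^{\,-}}$ for all $t\in[0,T_{{\rm max}})$. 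The point of staying in $L^-$ is that $b\|\nabla u(t)\|_2^2-\lambda\|u(t)\|_2^2<0$, i.e. the ``bad'' sign of the linear perturbation is now an asset: it will feed a coercive lower bound into the blow-up differential inequality. I would also record, from item (vii) of Proposition 1.1, that $d_3^{\,-}<0$, so in particular $E(0)<0$ along the flow in the sense that $J(u(t))<d_3^{\,-}<0$.

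Next I would set up the concavity machinery. Define $M(t)=\|u(t)\|_2^2$ (or, if boundary-regularity bookkeeping is cleaner, $M(t)=\int_0^t\|u(s)\|_2^2\,\dif s+$ lower-order terms as in the classical Levine scheme). Differentiating twice and using the equation \eqref{zh809-1} tested against $u$ gives
\begin{eqnarray*}
 \tfrac12 M''(t) = \|u_t(t)\|_2^2 - I(u(t)) = \|u_t(t)\|_2^2 + \|u(t)\|_4^4 - a\|\nabla u(t)\|_2^4 - \bigl(b\|\nabla u(t)\|_2^2-\lambda\|u(t)\|_2^2\bigr).
\end{eqnarray*}
On $\textit{\textbf{N}}_3^{\,-}\cap L^-$ every displayed term except possibly $\|u(t)\|_4^4-a\|\nabla u(t)\|_2^4$ has a favorable sign, and I would use the energy identity $E(t)=E(0)$ together with the representation \eqref{1.13} for $p=3$ to rewrite $\tfrac12 M''(t)$ as a positive multiple of $\|u_t\|_2^2$ plus a positive multiple of $\bigl(d_3^{\,-}-J(u(t))\bigr)$-type quantity plus controlled terms; the gap $d_3^{\,-}-E(0)>0$ is what makes the coefficient strictly positive. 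The goal is the standard inequality $M''M-\alpha (M')^2\ge 0$ for some $\alpha>1$ (concretely $\alpha=1+\epsilon$ with $\epsilon$ tied to $\tfrac14-\tfrac1{p+1}=\tfrac{1}{4}$ at $p=3$ after absorbing the nonlocal quartic term), which via the Kato--Levine lemma forces $M(t)\to\infty$ in finite time, hence $T_{{\rm max}}<\infty$ by the blow-up alternative stated after \eqref{2.1}.

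The main obstacle is controlling the sign-indefinite combination $a\|\nabla u\|_2^4-\|u\|_4^4$: unlike the case $\lambda<b\lambda_1$ (where \eqref{1.18} gives it for free), here on $L^-$ one cannot immediately conclude $a\|\nabla u\|_2^4-\|u\|_4^4<0$, and this is precisely why the hypothesis \eqref{2.11} and the restriction $\lambda\in(b\lambda_1,\,b\lambda_1+\delta)$ enter. I expect the argument to proceed by a perturbation/continuity estimate around $\lambda=b\lambda_1$: when $\lambda=b\lambda_1$ the functional $J$ restricted to $\textit{\textbf{N}}_3\cap L^-$ degenerates along the $\psi_1$-direction, and \eqref{2.11} guarantees $\psi_1\in S$ so that $\sigma_{\psi_1}\psi_1\in\textit{\textbf{N}}_3\cap L^-$ with $J(\sigma_{\psi_1}\psi_1)<0$; shrinking $\delta$ (borrowing the same $\delta$ from Lemma 3.8 / Lemma 3.11) keeps $d_3^{\,-}$ bounded away from $0$ and keeps the relevant quartic defect uniformly negative on the sublevel set $J^{d_3^{\,-}}\cap\textit{\textbf{N}}_3^{\,-}$. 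Assembling this uniform-in-$\lambda$ lower bound into the differential inequality, and checking that the constants in the Levine lemma can be chosen independently of $\lambda$ on $(b\lambda_1,b\lambda_1+\delta)$, completes the proof; the bookkeeping of the $H^2$-regularity needed to justify the second differentiation of $M$ is routine given $u\in H(T_{{\rm max}})$ and hypothesis \eqref{2.12}.
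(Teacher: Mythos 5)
Your overall skeleton (invariance from Case B of Theorem 2.1, then a Levine concavity argument on $M(t)=\|u(t)\|_2^2$) is the same as the paper's, but two of your load-bearing steps are not right. First, Case B only gives that $u(t)$ stays in $\textit{\textbf{N}}_3^{\,-}\cap J^{d_3^{\,-}}$; it does \emph{not} give that $u(t)$ stays in $L^-$. The set $\textit{\textbf{N}}_3^{\,-}\cap L^+$ is nonempty (by Proposition 3.9(ii), $\tau u\in\textit{\textbf{N}}_3^{\,-}$ for $u\in S\subseteq L^+$ and $\tau>\sigma_u$, and $L^+$ is a cone), and $u_0$ itself is only assumed to lie in $\textit{\textbf{N}}_3^{\,-}$, so your premise that ``the solution cannot leave $\textit{\textbf{N}}_3^{\,-}\cap L^-$'' and that $b\|\nabla u(t)\|_2^2-\lam\|u(t)\|_2^2<0$ along the flow is unjustified and may simply be false. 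Everything you hang on ``the bad sign of the linear perturbation is now an asset'' therefore has no foundation.

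Second, the quantitative input that actually closes the concavity argument is missing. The energy identity already eliminates the quartic terms you are worried about: from \eqref{1.13} with $p=3$ one gets $\tfrac12 M''(t)=3\|u_t\|_2^2+\left(b\|\nabla u\|_2^2-\lam\|u\|_2^2\right)-4E(0)$, so there is no need to control $a\|\nabla u\|_2^4-\|u\|_4^4$ at all. What is needed is the lower bound $b\|\nabla u\|_2^2-\lam\|u\|_2^2>4d_3^{\,-}$ for \emph{every} $u\in\textit{\textbf{N}}_3^{\,-}$ (Lemma 3.11(3), proved separately on $\textit{\textbf{N}}_3^{\,-}\cap\overline{L^+}$ and $\textit{\textbf{N}}_3^{\,-}\cap L^-$, which is precisely why the $L^-$ invariance issue above never arises in the paper). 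Combined with $E(0)<d_3^{\,-}$ this yields $M''>6\|u_t\|_2^2+8\left(d_3^{\,-}-E(0)\right)$, and then $MM''-\tfrac32(M')^2\ge 0$ by Cauchy--Schwarz forces $M^{-1/2}$ to vanish in finite time. Your proposed substitute --- a ``perturbation/continuity estimate around $\lam=b\lam_1$'' keeping ``the relevant quartic defect uniformly negative'' --- is left as an expectation, is not the estimate that is needed, and does not by itself produce the strict gap required to get a concavity exponent $\alpha>1$ (note also that your $\tfrac14-\tfrac1{p+1}$ equals $0$, not $\tfrac14$, at $p=3$). Without Lemma 3.11(3) or an equivalent, $I(u(t))<0$ alone only gives $M''>2\|u_t\|_2^2$, which is not enough to conclude blow-up.
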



   We should note that, when $\lam >b\lam_1$, the structure of the Nehari manifold $\textit{\textbf{N}}_3$ and the behavior of $J(u)$ on $\textit{\textbf{N}}_3 \cap L^-$ relies heavily on the assumption that $ \| \psi_1\|_4^4 - a\|\nabla \psi_1\|_2^4 < 0$ (see Proposition 3.9 and Lemma 3.10 for details). \par

   Moreover, Proposition 6.4 in Section 6 shows that \eqref{2.12} ensures the existence of initial data which satisfy the assumptions of Theorem 2.8, and gives a sufficient condition which leads to \eqref{2.12}.\par


\subsection{The 4-superlinear case}
 \label{} \ \ \ \ \
   Finally, we turn to the 4-superlinear problem.\par
  Our first result in this direction claims that the solution to \eqref{zh809-1} is bounded uniformly in time provided that $3<p<5$, $\lam \leq b\lam_1$ with initial data starting in the stable set.
   \begin{thm}
     {\rm(}Boundedness of the solution in the case $3<p<5)$ For $3<p<5$ and $a$, $b>0$, let $u \in H(T_{{\rm{max}}})$ be a solution of $\eqref{zh809-1}$ with $(u_0, u_1)$ satisfying $\eqref{1.2}$. Suppose that $\lam \leq b\lam_1$, $u_0 \in \textit{\textbf{W}}_{p} ^{\, +}$ and $E(0)<d_p$, then there exists a constant $K_3>0$ such that
       \begin{eqnarray}\label{2.14}
          \|u_t (t)\|_2 + \|\nabla u(t)\|_2 \leq K_3\qquad for\ \ t\in [0, T_{{\rm{max}}}).
        \end{eqnarray}
           \end{thm}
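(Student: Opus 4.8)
The plan is to run the standard potential-well / energy argument for the superlinear case, adapted to the Kirchhoff structure. First I would use the invariance result from Case $(C1)$ of Theorem 2.1: since $\lambda \le b\lambda_1$, $u_0 \in \textit{\textbf{W}}_p^{\,+}$ and $E(0) < d_p$, we have $u(t) \in \textit{\textbf{W}}_p^{\,+}$ for all $t \in [0, T_{{\rm max}})$, so in particular $I(u(t)) \ge 0$ and $J(u(t)) < d_p$ along the flow. Next I would invoke conservation of energy, $E(t) = \tfrac12\|u_t(t)\|_2^2 + J(u(t)) = E(0)$ for all $t$ (this follows by multiplying \eqref{zh809-1} by $u_t$ and integrating, valid in the regularity class $H(T_{{\rm max}})$). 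Then, using the identity \eqref{1.13} with $p$ in place of $3$, namely
\beq
J(u) = a\Bigl(\tfrac14-\tfrac1{p+1}\Bigr)\|\nabla u\|_2^4 + b\Bigl(\tfrac12-\tfrac1{p+1}\Bigr)\|\nabla u\|_2^2 - \lambda\Bigl(\tfrac12-\tfrac1{p+1}\Bigr)\|u\|_2^2 + \tfrac1{p+1}I(u),
\eeq
together with the lower bound $b\|\nabla u\|_2^2 - \lambda\|u\|_2^2 \ge b_0\|\nabla u\|_2^2$ from \eqref{1.6} (and $b_0 > 0$ since $\lambda \le b\lambda_1$; if $\lambda = b\lambda_1$ one keeps the $\|\nabla u\|_2^4$ term, which still controls $\|\nabla u\|_2^2$ after noting $\|\nabla u\|_2$ stays bounded or using a separate elementary estimate), and dropping the nonnegative term $\tfrac1{p+1}I(u(t)) \ge 0$, I get
\beq
\tfrac12\|u_t(t)\|_2^2 + \tfrac{a(p-3)}{4(p+1)}\|\nabla u(t)\|_2^4 + \tfrac{(p-1)b_0}{2(p+1)}\|\nabla u(t)\|_2^2 \;\le\; J(u(t)) + \tfrac12\|u_t(t)\|_2^2 = E(0) < d_p.
\eeq
Since $p > 3$, all terms on the left are nonnegative, so each is bounded by $E(0)$; in particular $\|u_t(t)\|_2^2 \le 2E(0)$ and $\|\nabla u(t)\|_2^2 \le \tfrac{2(p+1)}{(p-1)b_0}E(0)$, which yields \eqref{2.14} with $K_3 = \bigl(2E(0)\bigr)^{1/2} + \bigl(\tfrac{2(p+1)}{(p-1)b_0}E(0)\bigr)^{1/2}$ (when $\lambda = b\lambda_1$ one instead extracts the bound on $\|\nabla u\|_2$ from the quartic term, getting $K_3$ in terms of $a$, $p$, $E(0)$).

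The only genuinely delicate point is making sure $E(0)$ is a legitimate nonnegative (or at least finite usable) bound: by the remark following Theorem 2.1 and \eqref{1.17}, $u_0 \in \textit{\textbf{W}}_p^{\,+} \subset \textit{\textbf{N}}_p^{\,+}$ with $\lambda \le b\lambda_1$ forces $E(0) \ge 0$, and of course $E(0) < d_p$ by hypothesis, so $0 \le E(0) < d_p$ and the constant $K_3$ above is well-defined and depends only on $p$, $b_0$ (hence $b$, $\lambda$, $\lambda_1$) and $E(0)$. I expect the main obstacle — really the only thing requiring care rather than routine computation — to be the borderline case $\lambda = b\lambda_1$ (where $b_0 = 0$), which must be handled by using the $a(p-3)/(4(p+1))\|\nabla u\|_2^4$ term to bound $\|\nabla u(t)\|_2$ directly; everything else is a direct consequence of Theorem 2.1(C1), energy conservation, and identity \eqref{1.13}–\eqref{1.14}.
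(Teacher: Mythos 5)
Your proposal is correct and follows essentially the same route as the paper: invariance of $\textit{\textbf{W}}_p^{\,+}$ via Theorem 2.1 (C1), the energy identity, and the lower bound on $J$ coming from \eqref{1.13} with $I(u(t))\geq 0$. The only cosmetic difference is that the paper bounds $\|\nabla u(t)\|_2$ uniformly from the quartic term $a\bigl(\tfrac14-\tfrac1{p+1}\bigr)\|\nabla u\|_2^4\leq J(u(t))<d_p$, so the case split at $\lambda=b\lambda_1$ (where $b_0=0$) that you flag as delicate never arises.
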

  Proposition 6.5 shows the existence of initial data satisfying the conditions of Theorem 2.9.

   If the conditions of Theorem 2.9 do not hold, then the solution of \eqref{zh809-1} may blow-up at a finite time for suitable initial data and the parameter $\lam$ lying in some certain intervals. We have

 \begin{thm}
   {\rm (}Blowing up solutions for $p>3)$ Let $p>3$, and $u \in H(T_{{\rm{max}}})$ be a solution to $\eqref{zh809-1}$ with $(u_0, u_1)$ satisfying $\eqref{1.2}$. Assume that one of the following assumptions satisfies{\rm :}\\
$\bullet$  Case $(a):$ If $\lambda \leq b\lambda_1$, and we assume that :\par
  $(a1)$ $E(0)<0$, or\par
  $(a2)$ $E(0)=0$ and $\eqref{2.8}$ hold, or\par
  $(a3)$ $3<p<5$, $0< E(0) <d_p$, $\eqref{2.8}$ hold and $u_0 \in \textit{\textbf{W}}_{p} ^{\, -}$, or\par
   $(a4)$ $E(0) >0$, $\eqref{2.8}$ hold and
      \begin{eqnarray}\label{2.15}
      \|u_0\|_2^4 > \frac{4(p+1)}{(p-3)a\lam_1^2}E(0).
      \end{eqnarray}
$\bullet$ Case $(b):$ If $\lambda> b\lambda_1$, and we assume that$:$\par
   $(b1)$ $E(0) < -h_0/(2p+2)$, or \par
   $(b2)$ $E(0) = -h_0/(2p+2)$ and $\eqref{2.8}$ hold, where
  \begin{eqnarray}\label{2.16}
      h_0 = \frac{(p-1)^2 (\lam - b\lam_1)^2 }{2(p-3)a\lam_1^2}.
  \end{eqnarray}
  Then the maximal existence time $T_{{\rm{max}}}$ of $u$ is finite.\\[0.3em]
$\bullet$ Vacuum region of the solution$:$ If $3<p<5$ and $\lam \leq b\lam_1$, then
\begin{eqnarray}\label{2.17}
    \|\nabla u(t)\|_2 \geq \left[\frac{p+1}{4}aS_{p+1}^{(p+1)/2}\right]^{1/(p-3)}, \quad for\ all \ \ t\in [0, T_{{\rm{max}}}),
\end{eqnarray}
   provided that $E(0) \leq 0$ and $\|\nabla u_0\|_2 >0$.
  \end{thm}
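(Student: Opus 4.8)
The plan is to run the classical concavity (Levine-type) argument adapted to the nonlocal Kirchhoff structure. Set $M(t) = \|u(t)\|_2^2$ (possibly shifted, $M(t)=\|u(t)\|_2^2 + \beta(T-t)^2 + \gamma$ for suitable constants in the higher-energy cases), compute $M'(t) = 2\int_\Omega u u_t\,dx$ and
\[
M''(t) = 2\|u_t\|_2^2 + 2\int_\Omega u u_{tt}\,dx = 2\|u_t\|_2^2 - 2\big(a\|\nabla u\|_2^2 + b\big)\|\nabla u\|_2^2 + 2\lambda\|u\|_2^2 + 2\|u\|_{p+1}^{p+1},
\]
using the equation. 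Rewriting the right-hand side via $I(u)$ and the energy identity $E(t)=E(0)$, one gets $M''(t) \geq 2(p+3)\|u_t\|_2^2 - 4(p+1)E(0) + (\text{nonnegative terms in }\|\nabla u\|_2^2,\|u\|_2^2)$; the precise bookkeeping uses \eqref{1.13}–\eqref{1.14} with the $(p-3)/4$ and $(p-1)/2$ coefficients, which is exactly where the sign condition $p>3$ enters. The aim is to establish a differential inequality $M''(t)M(t) - \frac{p+3}{4}(M'(t))^2 \geq 0$ (or with a shift, $\geq$ a controllable error), whence $(M^{-\alpha})'' \leq 0$ for $\alpha = (p-1)/4 > 0$, forcing $M$ to blow up in finite time, contradicting global existence via the blow-up alternative stated after \eqref{2.1}.

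The work splits along the case distinctions. In Case $(a)$ with $\lambda \leq b\lambda_1$: for $(a1)$–$(a2)$ one shows $E(0)\leq 0$ forces $I(u_0)<0$ (as in Remark 2.7 for $p=3$) and then that $\textit{\textbf{N}}_p^{\,-}$ is invariant — here for $3<p<5$ this is Case C2 of Theorem 2.1, while for $p\geq 5$ one argues directly that $\int u u_t\,dx$ stays positive and $\|\nabla u\|_2$ cannot return to the Nehari manifold without violating the energy bound. For $(a3)$ invariance of $\textit{\textbf{W}}_p^{\,-}$ is again Theorem 2.1 (C2), together with the potential-well inequality $J(u)\geq d_p$ on $\textit{\textbf{N}}_p$ so that $E(0)<d_p$ keeps $u(t)$ off $\textit{\textbf{N}}_p$. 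For $(a4)$ and the higher-energy regime one uses the shifted functional: the hypothesis \eqref{2.15} with the $\lambda_1$-Poincaré inequality $\|u_0\|_2^2 \leq \lambda_1^{-1}\|\nabla u_0\|_2^2$ guarantees $M''(0)>0$, and a continuity/bootstrap argument shows $M'(t)$ stays positive and large enough to drive the concavity inequality. In Case $(b)$ with $\lambda>b\lambda_1$, the term $-\lambda\|u\|_2^2$ can be negative, so one completes the square: $\frac{b(p-1)}{2(p+1)}\|\nabla u\|_2^2 - \frac{\lambda(p-1)}{2(p+1)}\|u\|_2^2 \geq -\,h_0/(2p+2)\cdot(\text{something})$ after using Poincaré and optimizing, producing exactly the constant $h_0$ in \eqref{2.16}; then $(b1)$–$(b2)$ are the analogues of $(a1)$–$(a2)$ with $E(0)$ replaced by $E(0)+h_0/(2p+2)$.

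Finally, the vacuum-region estimate \eqref{2.17} for $3<p<5$, $\lambda\leq b\lambda_1$ follows from the energy inequality: when $E(0)\leq 0$, from $E(t)=E(0)\leq 0$ and the expression \eqref{1.13} of $J$ (dropping the nonnegative $\|u_t\|_2^2$, $\|\nabla u\|_2^2$ and $-\lambda\|u\|_2^2\geq 0$ terms and keeping $a(\tfrac14-\tfrac1{p+1})\|\nabla u\|_2^4$ against $\tfrac{1}{p+1}\|u\|_{p+1}^{p+1}$) one gets $\frac{a(p-3)}{4(p+1)}\|\nabla u\|_2^4 \leq \frac{1}{p+1}\|u\|_{p+1}^{p+1}\leq \frac{1}{p+1}S_{p+1}^{-(p+1)/2}\|\nabla u\|_2^{p+1}$; dividing by $\|\nabla u\|_2^4>0$ and rearranging yields \eqref{2.17}. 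The main obstacle is the $p\geq 5$ sub-case of $(a1)$–$(a2)$ and $(b1)$–$(b2)$, where neither the potential-well machinery of Theorem 2.1 (stated only for $3<p<5$) nor compact Sobolev embedding into $L^{p+1}$ is available; there one must rely purely on the sign of $I$, the monotonicity of $M'$, and the nonlocal $a\|\nabla u\|_2^4$ term to close the concavity argument — this is the delicate point and likely requires treating the differential inequality for $M$ (with an appropriate shift by $(T_{\max}-t)$-terms) rather than an invariant-set dichotomy.
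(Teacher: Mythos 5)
Your overall strategy --- the Levine concavity argument on $M(t)=\|u(t)\|_2^2$, invariant sets for $(a3)$--$(a4)$, minimizing a quadratic in $\|u\|_2^2$ to produce the constant $h_0$ in Case $(b)$, and energy-plus-Sobolev for the vacuum region --- is the same as the paper's. Two points need attention.

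First, you treat the sub-cases $(a1)$--$(a2)$ (and implicitly $(b1)$--$(b2)$) for $p\geq 5$ as a delicate unresolved issue requiring either invariance of $\textit{\textbf{N}}_p^{\,-}$ or a shifted functional. Neither is needed. Combining the second-derivative identity with the energy identity gives, for every $p>3$,
\begin{eqnarray*}
M''(t) = (p+3)\|u_t\|_2^2 + \frac{(p-3)a}{2}\|\nabla u\|_2^4 + (p-1)\left(b\|\nabla u\|_2^2-\lambda\|u\|_2^2\right) - 2(p+1)E(0),
\end{eqnarray*}
and for $\lambda\leq b\lambda_1$ the two middle terms are nonnegative by \eqref{1.6}, so $E(0)\leq 0$ immediately yields both $M''\geq (p+3)\|u_t\|_2^2$ and $M''\geq -2(p+1)E(0)$; Cauchy--Schwarz then closes the concavity inequality $MM''-\frac{p+3}{4}(M')^2\geq 0$ with no potential-well input and no restriction to $p<5$ (Case $(b)$ is handled the same way after absorbing $-(p-1)(\lambda-b\lambda_1)\|u\|_2^2$ into the quartic term via Poincar\'e, exactly your optimization). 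The restriction $3<p<5$ enters only in $(a3)$, where Theorem 2.1 (Case C) is invoked. So the ``delicate point'' you flag is not a difficulty in the theorem, but as written your proposal leaves those cases unclosed when the direct estimate already settles them.

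Second, your derivation of the vacuum region \eqref{2.17} produces the wrong constant. Discarding $\frac{1}{p+1}I(u)$ from \eqref{1.13} is not legitimate ($I(u)$ has no a priori sign here), and pairing only $\frac{a(p-3)}{4(p+1)}\|\nabla u\|_2^4$ against $\frac{1}{p+1}\|u\|_{p+1}^{p+1}$ yields $\|\nabla u\|_2^{p-3}\geq \frac{(p-3)a}{4}S_{p+1}^{(p+1)/2}$, which is strictly weaker than \eqref{2.17}. The extra $\frac{a}{p+1}\|\nabla u\|_2^4$ hidden inside $\frac{1}{p+1}I(u)$ must be retained: working directly from \eqref{zh809-4} gives $\frac{a}{4}\|\nabla u\|_2^4\leq\frac{1}{p+1}\|u\|_{p+1}^{p+1}\leq \frac{1}{(p+1)S_{p+1}^{(p+1)/2}}\|\nabla u\|_2^{p+1}$, which produces the stated constant $\frac{p+1}{4}aS_{p+1}^{(p+1)/2}$. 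You also need the continuity argument of Theorem 2.2 to guarantee $\|\nabla u(t)\|_2>0$ for all $t$ before dividing, since the inequality is also satisfied at $\|\nabla u\|_2=0$.
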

 \begin{rmk}
  {\rm (1) The existence of initial data satisfying the assumptions of Theorem 2.10 is proved in Proposition 6.6.}\par
  {\rm (2) Contrast to the asymptotically 4-linear case (see Theorem 2.6 and Theorem 2.8), we find that the coefficient $a$ may not  qualitatively affect the blow-up behavior of the solution for \eqref{zh809-1} when $p>3$.}\hfill $\Box$
  \end{rmk}
The Table 1 below summarizes our results concerning with the boundedness and blow-up of solutions for \eqref{zh809-1}, and the Table 2 lists the assumptions on $p$, $a$, $b$, $\lam$ and the initial data to generate the invariant set and vacuum region of the solution for \eqref{zh809-1}.
\begin{table}[H]
{ \bf \caption{Our results about the boundedness and blow-up of solutions for \eqref{zh809-1}}}\par\medskip

\centering
    \begin{tabular}{|c|c|c|c|}
  \bottomrule
   $p$ & Range of $a$, $b$, $\lambda$ & Assumptions of initial data & Result  \\
  \hline
    \multirow{2}*{$1<p<3$} &  \multirow{2}*{$a>0$, $b>0$, $\lambda \in \mathbb{R}$} &  \multirow{2}*{\eqref{1.2}} & bounded, \\ {} & {}& {} & Theorem 2.2  \\
  \hline
   \multirow{9}*{$p=3$} & $a>\frac{1}{\Lambda}$, $b>0$, $\lambda \in \mathbb{R}$ & \multirow{2}*{\eqref{1.2}} &\multirow{2}* {bounded,}   \\
  \cline{2-2}
  {} & $a=\frac{1}{\Lambda}$, $\lam < b\lam_1$ & {} & \multirow{2}*{Theorem 2.4}\\
  \cline{2-3}
  {} &  \multirow{5}*{$0<a<\frac{1}{\Lambda}$, $\lam < b\lam_1$} & \eqref{1.2}, $u_0 \in \textit{\textbf{W}}_{3} ^{\, +}$, $E(0)<d_3$ & {}\\
  \cline{3-4}
  {} & {} & \eqref{1.2}, $E(0)<0$ & \multirow{2}*{blow-up,}\\
  \cline{3-3}
   {} & {} & \eqref{1.2}, \eqref{2.8}, $E(0)=0$ & {}\\
  \cline{3-3}
   {} & {} & \eqref{1.2}, $u_0 \in \textit{\textbf{W}}_{3} ^{\, -}$, $0<E(0)<d_3$ & \multirow{2}*{Theorem 2.6}\\
  \cline{3-3}
   {} & {} & \eqref{1.2}, \eqref{2.8}, \eqref{2.9}, $E(0)\geq d_3$ & {}\\
  \cline{2-4}
  {} & $0<a<\frac{1}{\Lambda}$,  & \eqref{1.2}, (2.11), (2.12),   & blow-up, \\
  {} & $b\lam_1 < \lam < b\lam_1 + \delta$ & $u_0 \in \textit{\textbf{N}}_{3} ^{\, -}$, $E(0)<d_3^-$ & Theorem 2.8\\
  \hline
  \multirow{4}*{$3<p<5$} & \multirow{4}*{$a>0$, $\lam \leq b\lambda_1$} & \multirow{2}*{\eqref{1.2}, $u_0 \in \textit{\textbf{W}}_{p} ^{\, +}$, $E(0)<d_p$} &  bounded,   \\
  {} & {} & {}&  Theorem 2.9  \\
  \cline{3-4}
  {} & {} & \eqref{1.2}, \eqref{2.8}, $u_0 \in \textit{\textbf{W}}_{p} ^{\, -}$,  &  \multirow{5}*{blow-up, }\\
   {} & {} &   $0<E(0)<d_p$ &  {}\\
  \cline{1-3}
  \multirow{5}*{$p>3$} & \multirow{3}*{ $a>0$, $\lambda \leq b\lambda_1$} & \eqref{1.2}, $E(0)<0$ & {}\\
  \cline{3-3}
  {} & {} & \eqref{1.2}, \eqref{2.8}, $E(0)=0$ & \multirow{3}*{Theorem 2.10}\\
  \cline{3-3}
   {} & {} & \eqref{1.2}, \eqref{2.8}, \eqref{2.15}, $E(0)>0$ & {}\\
  \cline{2-3}
  {} & \multirow{2}*{ $a>0$, $\lambda > b\lambda_1$} & \eqref{1.2}, $E(0) < -h_0 / (2p+2)$ & {}\\
  \cline{3-3}
  {} & {} & \eqref{1.2}, \eqref{2.8}, $E(0) = -h_0 / (2p+2)$ & {}\\
  \toprule
\end{tabular}
\end{table}

\begin{table}[H]
 \centering
 {\bf \caption{Sufficient conditions for the existence of}}
 {\centering\bf{invariant set or vacuum region of the solution}}\par\medskip

    \begin{tabular}{ccc}
  \bottomrule
 \multirow{2}*{ Value of $p$} & Assumptions to generate the &  Assumptions to generate the    \\
  {} & invariant set of the solution & vacuum region of the solution \\
  \bottomrule
    \multirow{3}*{$1<p<3$} & \multirow{3}*{no result} & $a>0$ is sufficiently small,  \\
     {} & {} & $\lam <b\lam_1$, $E(0)\leq 0$, $\|\nabla u_0\|_2 >0$    \\
     {} & {} & (Theorem 2.2)\\
  \hline
   \multirow{3}*{$p=3$} & $0<a<\frac{1}{\Lambda}$, $\lam < b\lam_1$ and $E(0)< d_3$ & $0<a<\frac{1}{\Lambda}$, $\lam < b\lam_1$, \eqref{2.7},  \\
    {} &  (or $b\lam_1 < \lam < b\lam_1 + \delta$, $E(0)< d_3^-$) & $E(0)\leq 0$, $\|\nabla u_0\|_2 >0$  \\
    {} & (Cases $A$ and $B$ of Theorem 2.1)  & (Theorem 2.6)\\
  \hline
  \multirow{3}*{$3<p<5$} & $a>0$, $\lam \leq b\lam_1$, & $a>0$, $\lam \leq b\lam_1$,  \\
  {} & $E(0)<d_p$ & $E(0)\leq 0$, $\|\nabla u_0\|_2 >0$\\
  {} & (Case $C$ of Theorem 2.1) & (Theorem 2.10)\\
  \toprule
\end{tabular}

\end{table}

\subsection{Comments on our results}
 \label{} \ \ \ \ \
   1. \emph{Differences between the behavior of solutions for $\eqref{zh809-1}$ and $\eqref{1.3}$}\par
   It is worth noting that the conclusions of Theorem 2.2 and the cases $(H1)$, $(H2)$ of Theorem 2.4 reveal some interesting phenomena which present striking differences between the behavior of solutions of hyperbolic Kirchhoff type equation \eqref{zh809-1} (the case $a>0$) and that of wave equation \eqref{1.3} (the case $a=0$), as we know the solutions of \eqref{1.3} for $p>1$ may blow-up in finite time when the initial data of \eqref{1.3} lie within some unstable sets (see \cite{C, Liu, LZ}). While for $a>0$ and any initial data satisfying \eqref{1.2}, the solutions of \eqref{zh809-1} are bounded uniformly in time when (i) $1<p<3$, or (ii) $p=3$, $a \geq 1/\Lambda$ and $\lam<b\lam_1$. \par

   2. In \cite{Liu}, Liu studied the blow-up behavior of the solution for the local problem \eqref{1.3} with positive initial energy and initial data lying in the unstable set under the assumption of \eqref{2.8}. Theorem 2.6 $(h3)$ concludes that the condition \eqref{2.8} is not necessary to obtain the blow-up result for \eqref{zh809-1} due to the nonlocal effect.

   3. For $p=3$ and $\lam = 0$, the blow-up phenomena of \eqref{zh809-1} was studied by \cite{I, IO} for initial energy $E(0)$ below the depth of the potential well. Our work demonstrates the connection between the blow-up of problem \eqref{zh809-1} and the value of $p$, $\lam$. We generalize the results of \cite{I, IO} in the following four aspects:\par
   (i) In Theorem 2.6 we discuss the case of $p=3$, $0<a<1/\Lambda$ and $\lambda < b\lambda_1$ with arbitrary initial energy $E(0)$;\par
    (ii) In Theorem 2.8 we construct the the finite time blow-up solution for \eqref{zh809-1} with low energy when $\lam$ lies in some interval $(b\lam_1, b\lam_1 + \delta)$; \par
    (iii) In Theorem 2.10 we study the blow-up of \eqref{zh809-1} with $p>3$ and for both cases $\lambda \leq b\lambda_1$ and $\lambda > b\lambda_1$;\par
    (iv) We build the conditions of the initial data to obtain the vacuum region of the solution for \eqref{zh809-1} with $1<p<5$.


\section{Structure of Nehari manifold $\textit{\textbf{N}}_3$}
\setcounter{equation}{0}
\label{}\ \ \ \ \
  It is well known that the stationary solution for \eqref{zh809-1} correspond to the critical point of the energy functional $J$ which is defined by \eqref{zh809-4}. Due to \eqref{1.12} and \eqref{1.15}, we can see that all critical points of $J$ must lie on the Nehari manifold $\textit{\textbf{N}}_p$.\par
 For $3\leq p <5$, the nature of $\textit{\textbf{N}}_p$ connects strongly with the behavior of fiber mapping $K_u : \mathbb{R}^+ \rightarrow \mathbb{R}$ in the form of
  \begin{eqnarray}\label{3.1}
    K_u(\tau) = J(\tau u) = \frac{a \tau^4}{4} \|\nabla u\|_2^4 + \frac{b \tau^2}{2} \|\nabla u\|_2^2 - \frac{\lambda \tau^2}{2}  \|u\|_2^2 - \frac{\tau^{p+1}}{p+1} \|u\|_{p+1}^{p+1},\quad  \forall~\tau >0 ,
  \end{eqnarray}
 for every $u\in H^1_0(\Omega)$ and $3\leq p <5$. Such a map was studied by Brown and Zhang \cite{BZ} for the case $a=0$, Ikehata and Okazawa \cite{IO} for the case $a>0$, $p=3$, $\lam=0$.

 We have
  \begin{eqnarray}\label{3.2}
    K_u'(\tau)&=& a\tau^3 \|\nabla u\|_2^4 + b\tau \|\nabla u\|_2^2 - \lambda \tau \|u\|_2^2 - \tau^{p} \|u\|_{p+1}^{p+1},
    \end{eqnarray}
    and
     \begin{eqnarray}\label{3.3}
     K_u''(\tau)&=& 3a\tau^2 \|\nabla u\|_2^4 + b \|\nabla u\|_2^2 - \lambda \|u\|_2^2 - p \tau^{p-1} \|u\|_{p+1}^{p+1}.
  \end{eqnarray}
  It is easy to infer that $I(\tau u) = \tau K_u'(\tau)$ from \eqref{1.12} and \eqref{3.2}. So, for $u\in H^1_0(\Omega)\backslash \{0\}$ and $\tau >0$, we have $K_u'(\tau) = 0$ if and only if $\tau u \in \textit{\textbf{N}}_p$, which means that elements in $\textit{\textbf{N}}_p$ correspond to stationary points of the fiber mapping $K_u (\tau)$. In particular, we obtain that $K_u'(1) = 0$ if and only if $u \in \textit{\textbf{N}}_p$.\par\smallskip

 The nature of $\textit{\textbf{N}}_p$ is closely related to the value of $p$, $\lam$. In this section, we investigate the Nehari manifold $\textit{\textbf{N}}_3$ via the fiber mapping method, and discuss how the structure of $\textit{\textbf{N}}_3$ changes as $\lam$ changes.

  \subsection{The case of $p=3$, $\lam < b\lam_1$}
3.1.1 \emph{Fiber mapping analysis and the structure of $\textit{\textbf{N}}_3$ for $\lam < b\lam_1$}\par\medskip
Since $\lam < b\lam_1$, as mentioned in introduction, we deduce from \eqref{1.6} that $L^0 = \left\{0\right\}$ and $L^-$ is empty, which imply that $b\, \|\nabla u\|_2^2 - \lambda\, \|u\|_2^2 >0$ for any $u\in H_0^1(\Omega)\backslash \{0\}$, and $b\, \|\nabla u\|_2^2 - \lambda\, \|u\|_2^2 =0$ if and only if $\|\nabla u\|_2 =0$. In view of these facts and \eqref{3.1} when $p=3$, we know that the behavior of fiber mapping $K_u$ for $p=3$ depends on the sign of $\|u\|_4^4 - a \|\nabla u\|_2^4$.

 To show this, we define
   \begin{eqnarray}\label{3.4}
     h_3(\tau) = \left(a \|\nabla u\|_2^4 - \|u\|_4^4 \right)\tau^2 + b\, \|\nabla u\|_2^2 - \lambda\, \|u\|_2^2.
   \end{eqnarray}
 If $u$ satisfies $\|u\|_4^4 - a \|\nabla u\|_2^4 < 0$, then $h_3(\tau)$ is increasing in $\tau \in [0, +\infty)$ and tends to $+\infty$ as $\tau\rightarrow +\infty$, which implies that $K_u(\tau)$ does not have any stationary point.\par
  In order to obtain a $\tau >0$ such that $K'_u(\tau) =0$, we introduce a subspace of $H^1_0(\Omega)$ as following:
   \begin{eqnarray}\label{3.5}
     S = \left\{u\in H^1_0(\Omega):\, \|u\|_4^4 - a \|\nabla u\|_2^4 > 0 \right\}.
   \end{eqnarray}
It is obviously that $0\not\in S$. Further we claim:\par
  (i) $S \neq \emptyset$ assuming that $0<a<1/\Lambda$, where $\Lambda>0$ is the constant appearing in \eqref{1.9};\par
  (ii) If  $u\in S$, then $\tau u \in S$ for $\tau >0$.\par
   In fact, by virtue of \eqref{1.9} and $0<a<1/\Lambda$, we have
   \begin{eqnarray}\label{3.6}
   a \|\nabla \phi_\Lambda\|_2^4 - \|\phi_\Lambda\|_4^4 = a\Lambda -1 <0
   \end{eqnarray}
    and $\phi_\Lambda \in S$. Hence (i) is obtained. It is easy to observe from \eqref{3.5} that (ii) holds.

  The following lemma shows the nature of the fiber mapping $K_u$ for $p=3$.

\begin{lem}
  Let $p=3$, $\lambda< b\lambda_1$ and $0<a<1/\Lambda$. Then for every $u\in S$, it holds that\par
 {\rm (i)} there is a unique number $\sigma_u>0$ depending on $u$, such that $K_u'\left(\sigma_u\right) = 0${\rm ;}\par
 {\rm (ii)} $K_u'\left(\tau\right) > 0$ for $0< \tau< \sigma_u${\rm,} and $K_u'\left(\tau\right) < 0$ for $\tau > \sigma_u${\rm ;}\par
 {\rm (iii)} $K_u''\left(\sigma_u\right) < 0$, and $K_u(\sigma_u) =  \sup\limits_{\tau >0}K_u(\tau)$.\par
 While, for $u\in S^c= \left\{u\in H^1_0(\Omega):\, a \|\nabla u\|_2^4 - \|u\|_4^4 \geq 0\right\}$ and $\|\nabla u\|_2\neq 0$, we have\par
 {\rm (iv)} $K_u'\left(\tau\right) > 0$ for $\tau >0$, and  $\sup\limits_{\tau >0}K_u(\tau) = +\infty$.
\end{lem}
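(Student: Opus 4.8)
The plan is to analyze the fiber map $K_u$ directly through its derivative $K_u'(\tau) = \tau\, h_3(\tau)$, where
\[
  h_3(\tau) = \left(a\|\nabla u\|_2^4 - \|u\|_4^4\right)\tau^2 + b\|\nabla u\|_2^2 - \lambda\|u\|_2^2,
\]
using the factorization $I(\tau u) = \tau K_u'(\tau)$ noted after \eqref{3.3}. First I would fix $u\in S$, so that $\|u\|_4^4 - a\|\nabla u\|_2^4 > 0$; since $\lambda < b\lambda_1$ forces $b\|\nabla u\|_2^2 - \lambda\|u\|_2^2 > 0$ by \eqref{1.6}, the function $h_3$ is strictly decreasing on $[0,\infty)$, is strictly positive at $\tau = 0$, and tends to $-\infty$ as $\tau\to+\infty$. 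Hence $h_3$ has a unique zero $\sigma_u > 0$, which is exactly the unique positive root of $K_u'$, giving (i); moreover $K_u'(\tau) = \tau h_3(\tau)$ is positive on $(0,\sigma_u)$ and negative on $(\sigma_u,+\infty)$, giving (ii). For (iii), I would differentiate: from $K_u'(\tau)=\tau h_3(\tau)$ we get $K_u''(\tau) = h_3(\tau) + \tau h_3'(\tau)$, so $K_u''(\sigma_u) = \sigma_u h_3'(\sigma_u) < 0$ because $h_3' < 0$ everywhere on $S$ and $\sigma_u > 0$. Since $K_u$ is increasing before $\sigma_u$ and decreasing after, and $\sigma_u$ is its only positive critical point, $K_u(\sigma_u) = \sup_{\tau>0} K_u(\tau)$; this also shows $\sigma_u u \in \textit{\textbf{N}}_3$ and matches the formula \eqref{3.7} one obtains by solving $h_3(\sigma_u)=0$ explicitly for $\sigma_u^2$.

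For part (iv), I would take $u\in S^c$ with $\|\nabla u\|_2\neq 0$, so $a\|\nabla u\|_2^4 - \|u\|_4^4 \geq 0$. Then the leading coefficient of $h_3$ is nonnegative and the constant term $b\|\nabla u\|_2^2 - \lambda\|u\|_2^2$ is strictly positive, so $h_3(\tau) > 0$ for all $\tau \geq 0$; consequently $K_u'(\tau) = \tau h_3(\tau) > 0$ for every $\tau > 0$, so $K_u$ is strictly increasing on $(0,\infty)$. To see $\sup_{\tau>0} K_u(\tau) = +\infty$, note from \eqref{3.1} that
\[
  K_u(\tau) = \frac{\tau^2}{2}\left(b\|\nabla u\|_2^2 - \lambda\|u\|_2^2\right) + \frac{\tau^4}{4}\left(a\|\nabla u\|_2^4 - \|u\|_4^4\right);
\]
both terms are nonnegative and the quadratic term alone diverges as $\tau\to+\infty$ (using that $b\|\nabla u\|_2^2 - \lambda\|u\|_2^2 > 0$ since $\|\nabla u\|_2\neq 0$), so $K_u(\tau)\to+\infty$.

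The only genuinely delicate point is keeping the case distinction clean: one must use $\lambda < b\lambda_1$ together with \eqref{1.6} to guarantee strict positivity of $b\|\nabla u\|_2^2 - \lambda\|u\|_2^2$ for every nonzero $u$, which is what makes $h_3(0) > 0$ and thus pins down the sign pattern in both the $S$ and $S^c$ cases; the rest is elementary monotonicity of a biquadratic polynomial. I would also remark that the explicit value $\sigma_u = \bigl((b\|\nabla u\|_2^2 - \lambda\|u\|_2^2)/(\|u\|_4^4 - a\|\nabla u\|_2^4)\bigr)^{1/2}$ furnishes \eqref{3.7} and confirms $\sigma_u u\in\textit{\textbf{N}}_3$, closing the argument.
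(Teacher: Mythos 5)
Your proposal is correct and follows essentially the same route as the paper: both reduce everything to the sign of $h_3(\tau)=(a\|\nabla u\|_2^4-\|u\|_4^4)\tau^2+b\|\nabla u\|_2^2-\lambda\|u\|_2^2$ via $K_u'(\tau)=\tau h_3(\tau)$, with \eqref{1.6} supplying $h_3(0)>0$. The only cosmetic difference is in (iii), where you compute $K_u''(\sigma_u)=\sigma_u h_3'(\sigma_u)$ by the product rule while the paper substitutes $\sigma_u$ from \eqref{3.7} into \eqref{3.3}; both give $K_u''(\sigma_u)=-2\left(b\|\nabla u\|_2^2-\lambda\|u\|_2^2\right)<0$.
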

\begin{proof}
  (i) For $u\in S$, by $\lambda< b\lambda_1$ and \eqref{1.6}, then we obtain a unique number
  \begin{eqnarray}\label{3.7}
    \sigma_{u} = \left(\frac{ b\|\nabla u\|_2^2 - \lambda \|u\|_2^2}{\|u\|_{4}^{4} - a\|\nabla u\|_2^4}\right)^{\frac{1}{2}} >0
  \end{eqnarray}
  such that
  \begin{eqnarray*}
    h_3(\sigma_{u}) = \left(a \|\nabla u\|_2^4 - \|u\|_4^4 \right)\sigma_{u}^2 + b\, \|\nabla u\|_2^2 - \lambda\, \|u\|_2^2 = 0.
  \end{eqnarray*}
  Hence $K_u'(\sigma_u)=\sigma_u h_3 (\sigma_u) = 0$.\par
  (ii) It follows from \eqref{3.4}, \eqref{3.5} and \eqref{3.7} that $h_3 (\tau) > 0$ for $0\leq \tau< \sigma_u$, and $h_3 (\tau) < 0$ for $\tau > \sigma_u$. Further, noting that $K_u'(\tau)=\tau h_3 (\tau)$, we conclude (ii).\par
  (iii) Using \eqref{3.7}, \eqref{3.3} for $p=3$ and \eqref{1.6}, we have
  $$K_u''\left(\sigma_u\right) = 3 \left (a \|\nabla u\|_2^4 - \|u\|_4^4 \right) \sigma_u^2 + b\|\nabla u\|_2^2 - \lambda \|u\|_2^2 = -2 \left(b\|\nabla u\|_2^2 - \lambda\|u\|_2^2 \right) < 0,
  $$
for $u\in S$ and $\lambda< b\lambda_1$. Then $K_u(\sigma_u) =  \sup\limits_{\tau >0}K_u(\tau)$.\par
  (iv) For $u\in S^c$, by \eqref{1.6} and \eqref{3.4} we get $h_3(\tau)>0$, and $K_u'(\tau)=\tau h_3 (\tau)>0$ for every $\tau>0$. Moreover, if $u\in S^c$, then we deduce from \eqref{3.1} with $p=3$ that $K_u(\tau)\geq \frac{1 }{2}  \left( b\|\nabla u\|_2^2 -  \lambda \|u\|_2^2\right)\tau^2$ for all $\tau >0$. Then by \eqref{1.6} we obtain (iv).
\end{proof}

  Now we are ready to prove the non-emptiness and several properties of $\textit{\textbf{N}}_3$.\par
   \begin{prop}
   Let $p=3$, $\lambda < b\lambda_1$ and $0<a<1/\Lambda$, then for every $u\in S$, \par
   {\rm (1)} $\sigma_u u \in \textit{\textbf{N}}_3$, $\tau u \in \textit{\textbf{N}}_3^{\,+}$ for $\tau \in (0, \sigma_u)$ and $\tau u \in \textit{\textbf{N}}_3^{\,-}$ for $\tau \in (\sigma_u, +\infty)${\rm ,} where $\sigma_u$ is the unique number determined by $\eqref{3.7}$, and
   \begin{eqnarray}\label{3.8}
    J(\sigma_u u) = \sup\limits_{\tau >0} J(\tau u) = \frac{\left(b\|\nabla u\|_2^2 - \lambda \|u\|_2^2\right)^2}{4 \left(\|u\|_{4}^{4} - a\|\nabla u\|_2^4\right)}{\rm ;}
    \end{eqnarray}

 {\rm (2)} $J(\tau u)$ is increasing for $\tau \in (0, \sigma_u)$, and is decreasing for $\tau \in (\sigma_u, +\infty)${\rm ;}\\
  While, if $u\in S^c$ and $\|\nabla u\|_2\neq 0$, then we have \par
 $(3)$ $I(\tau u)>0$ for all $\tau >0$.
  \end{prop}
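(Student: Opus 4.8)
The plan is to translate everything about $\textit{\textbf{N}}_3$, $\textit{\textbf{N}}_3^{\,+}$ and $\textit{\textbf{N}}_3^{\,-}$ into statements about the fiber map $K_u$ via the identity $I(\tau u)=\tau K_u'(\tau)$, which was derived from \eqref{1.12} and \eqref{3.2}. First I would record this identity for the convenience of the reader: for $u\in H^1_0(\Omega)\backslash\{0\}$ and $\tau>0$, $I(\tau u)=\tau K_u'(\tau)=\tau^2 h_3(\tau)$, where $h_3$ is the quadratic polynomial \eqref{3.4}; in particular the sign of $I(\tau u)$ equals the sign of $h_3(\tau)$.

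For part (1), take $u\in S$. By Lemma 3.1(i) there is a unique $\sigma_u>0$ with $K_u'(\sigma_u)=0$, and by the formula \eqref{3.7} it is exactly the positive root of $h_3$. Then $I(\sigma_u u)=\sigma_u K_u'(\sigma_u)=0$, so $\sigma_u u\in\textit{\textbf{N}}_3$ (note $\sigma_u u\neq 0$ since $u\in S$ forces $\|\nabla u\|_2\neq 0$). By Lemma 3.1(ii), $K_u'(\tau)>0$ on $(0,\sigma_u)$ and $K_u'(\tau)<0$ on $(\sigma_u,+\infty)$; multiplying by $\tau>0$ gives $I(\tau u)=\tau K_u'(\tau)>0$ for $\tau\in(0,\sigma_u)$ and $I(\tau u)<0$ for $\tau\in(\sigma_u,+\infty)$, i.e.\ $\tau u\in\textit{\textbf{N}}_3^{\,+}$ and $\tau u\in\textit{\textbf{N}}_3^{\,-}$ respectively (in the first case $\tau u$ may be nonzero, which is consistent with the definition of $\textit{\textbf{N}}_3^{\,+}$ which already contains $0$). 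For the supremum formula, Lemma 3.1(iii) already gives $K_u(\sigma_u)=\sup_{\tau>0}K_u(\tau)=\sup_{\tau>0}J(\tau u)$; it remains to plug $\sigma_u$ from \eqref{3.7} into \eqref{3.1} with $p=3$ and simplify. Using $\sigma_u^2=\dfrac{b\|\nabla u\|_2^2-\lambda\|u\|_2^2}{\|u\|_4^4-a\|\nabla u\|_2^4}$ one groups the $\tau^2$-terms, $\dfrac{a\tau^4}{4}\|\nabla u\|_2^4-\dfrac{\tau^4}{4}\|u\|_4^4=-\dfrac14(\|u\|_4^4-a\|\nabla u\|_2^4)\sigma_u^4$, against $\dfrac{\tau^2}{2}(b\|\nabla u\|_2^2-\lambda\|u\|_2^2)=\dfrac12(\|u\|_4^4-a\|\nabla u\|_2^4)\sigma_u^4$, whose sum is $\dfrac14(\|u\|_4^4-a\|\nabla u\|_2^4)\sigma_u^4=\dfrac{(b\|\nabla u\|_2^2-\lambda\|u\|_2^2)^2}{4(\|u\|_4^4-a\|\nabla u\|_2^4)}$, which is \eqref{3.8}.

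Part (2) is immediate: since $\dfrac{\dif}{\dif\tau}J(\tau u)=K_u'(\tau)$, the monotonicity of $\tau\mapsto J(\tau u)$ is precisely the sign behavior of $K_u'$ from Lemma 3.1(ii), so $J(\tau u)$ is strictly increasing on $(0,\sigma_u)$ and strictly decreasing on $(\sigma_u,+\infty)$. Part (3): if $u\in S^c$ and $\|\nabla u\|_2\neq 0$, Lemma 3.1(iv) gives $K_u'(\tau)>0$ for all $\tau>0$, hence $I(\tau u)=\tau K_u'(\tau)>0$ for all $\tau>0$, which is the claim.

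I do not expect any serious obstacle here; the proposition is essentially a bookkeeping corollary of Lemma 3.1. The only mildly delicate point is the algebraic simplification yielding \eqref{3.8}, where one must be careful with the bookkeeping of the $\tau^4$ versus $\tau^2$ contributions and the sign of $\|u\|_4^4-a\|\nabla u\|_2^4$ (positive on $S$, so $\sigma_u$ is well defined and the final expression is genuinely a positive maximum); and the trivial but worth-stating remark that membership in $\textit{\textbf{N}}_3^{\,+}$ versus $\textit{\textbf{N}}_3$ is governed by the strict versus non-strict inequality $I>0$ versus $I=0$, so that $\sigma_u u$ lands exactly on the manifold and not in the interior.
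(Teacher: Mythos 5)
Your proposal is correct and follows essentially the same route as the paper: both reduce everything to Lemma 3.1 via the identities $J(\tau u)=K_u(\tau)$ and $I(\tau u)=\tau K_u'(\tau)$, and then obtain \eqref{3.8} by substituting $\sigma_u^2$ from \eqref{3.7} into the $p=3$ form of $J(\tau u)$, exactly as in the paper's computation. Your algebraic simplification matches the paper's two-line evaluation $J(\sigma_u u)=-\tfrac{\sigma_u^4}{4}\left(\|u\|_4^4-a\|\nabla u\|_2^4\right)+\tfrac{\sigma_u^2}{2}\left(b\|\nabla u\|_2^2-\lambda\|u\|_2^2\right)=\tfrac{\left(b\|\nabla u\|_2^2-\lambda\|u\|_2^2\right)^2}{4\left(\|u\|_4^4-a\|\nabla u\|_2^4\right)}$.
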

  \begin{proof}
   Keeping in mind that $J(\tau u)= K_u(\tau)$ and $I(\tau u) = \tau K_u'(\tau)$ for $\tau>0$, then it follows from Lemma 3.1 (i), (ii) and (iv) that (1)-(3) hold except \eqref{3.8}. Further, we derive from Lemma 3.1 (iii) and \eqref{3.7} that
   \begin{eqnarray*}
    \qquad\qquad \sup\limits_{\tau >0} J(\tau u)\ =\ J(\sigma_u u) &=& -\frac{ \sigma_u^4}{4} \left(\|u\|_{4}^{4} - a\|\nabla u\|_2^4\right) + \frac{\sigma_u^2}{2} \left(b\|\nabla u\|_2^2 - \lambda \|u\|_2^2\right) \\
     &=& \frac{\left(b\|\nabla u\|_2^2 - \lambda \|u\|_2^2\right)^2}{4 \left(\|u\|_{4}^{4} - a\|\nabla u\|_2^4\right)},\qquad\qquad\qquad {\rm for}\ \ u\in S.
   \end{eqnarray*}
  Thus \eqref{3.8} is proved.
  \end{proof}

 \begin{rmk}
{\rm (1) Under the hypothesis of $p=3$, $0<a<1/\Lambda$ and $\lambda < b\lambda_1$, we have $\textit{\textbf{N}}_3$, $\textit{\textbf{N}}_3^{\,+}$, $\textit{\textbf{N}}_3^{\,-}$ are nonempty, and ${\textit{\textbf{N}}_3} \cup {\textit{\textbf{N}}_3^{\,-}} \subseteq S$.

 Indeed, noting that $\phi_\Lambda \in S$ where $\phi_\Lambda$ satisfying \eqref{1.9}, then for the number $\sigma_{\phi_\Lambda}>0$ defined by \eqref{3.7} with $u=\phi_\Lambda$, we have $\sigma_{\phi_\Lambda} \phi_\Lambda \in \textit{\textbf{N}}_3$. Moreover, we infer from Proposition 3.2 (1) that $\tau \phi_\Lambda \in \textit{\textbf{N}}_3^{\,+}$ for $0< \tau < \sigma_{\phi_\Lambda}$, and $\tau \phi_\Lambda \in \textit{\textbf{N}}_3^{\,-}$ for $\tau > \sigma_{\phi_\Lambda}$. Thus we have proved the non-emptiness of the sets $\textit{\textbf{N}}_3$, $\textit{\textbf{N}}_3^{\,+}$, $\textit{\textbf{N}}_3^{\,-}$, and we deduce from \eqref{1.18} that ${\textit{\textbf{N}}_3} \cup {\textit{\textbf{N}}_3^{\,-}} \subseteq S$.}\\[0.3em]
   {\rm (2) Keeping in mind that $\tau u \in S$ for $u\in S$ and $\tau >0$, then by Proposition 3.2 we observe that for every $u\in S$ and $\tau >0$, the ray $\tau \mapsto \tau u$  intersects exactly once the manifold $\textit{\textbf{N}}_3$ at the point $\sigma_u u$, and $\textit{\textbf{N}}_3$ separates the space $S$ into two parts:\par
      $\textit{\textbf{N}}_3^{\,+} \cap S = \left\{u\in S: I(u)>0\right\} = \left\{\tau u:~u\in S,\ \ 0<\tau<\sigma_u\right\}$, and \par
      $\textit{\textbf{N}}_3^{\,-} = \left\{u\in S: I(u)<0\right\} = \left\{\tau u:~u\in S,\ \ \tau > \sigma_u\right\} \subseteq S$.\\[0.3em]
   (3) Relation between the size of $\textit{\textbf{N}}_3^{\,+}$, $\textit{\textbf{N}}_3^{\,-}$ and the value of $a$, $\lam$.\par
   For $u\in S$ and $b>0$, by virtue of \eqref{3.7}, we find that $\sigma_u$ depends on the value of $a$ and $\lam$.\\
  $\bullet$\quad If $a\in (0, 1/\Lambda)$ is fixed, then $\sigma_u$ is strictly decreasing on $\lam \in (-\infty, b\lam_1)$. Then, for $\lam <b\lam_1$ and
 by Proposition 3.2 (1), we deduce that the size of $\textit{\textbf{N}}_3^{\,+}$ gets smaller and $\textit{\textbf{N}}_3^{\,-}$ gets bigger as $\lam$ increases.\\
  $\bullet$\quad If $\lam \in (-\infty, b\lam_1)$ is fixed, then it follows from \eqref{3.7} that  $\sigma_u$ is strictly increasing on $a \in (0, 1/\Lambda)$. Then by \eqref{3.5} and Proposition 3.2 (1) we find that $\textit{\textbf{N}}_3^{\,+}$ gets bigger, while $S$ and $\textit{\textbf{N}}_3^{\,-}$ get smaller as $a$ increases on $(0, 1/\Lambda)$.\\
  $\bullet$\quad For $a\geq 1/\Lambda$ and $\lam <b\lam_1$, then by the definition of $\Lambda$, we obtain
 $$a\|\nabla u\|_2^4 - \|u\|_4^4 \geq 0,\qquad {\rm for\ all}\ u\in H^1_0 (\Omega)\setminus \{0\}.
 $$
 Thus, we have
  $$\sup_{\tau>0} J(\tau u) = +\infty,
  $$
  and $I(u)>0$ for all $u\in H^1_0 (\Omega)\setminus \{0\}$, which indicates that $\textit{\textbf{N}}_3^{\,-}\cup \textit{\textbf{N}}_3 = \emptyset$.
      \par\smallskip
    }
    \end{rmk}

 Next, we study how the sign of $I(u)$ depends on the size of $u$.
  \begin{lem}
    Let $p=3$, $0<a<1/\Lambda$, $\lambda <b\lambda_1$, and
    $$\rho_3 := \sqrt{\frac{b_0 \Lambda}{1-a\Lambda}},
    $$
where $b_0$ appears in $\eqref{1.7}$.\par
    {\rm(1)} If $0<\|\nabla u\|_2 < \rho_3$, then $I(u)>0${\rm ;}\par
   {\rm(2)} If $I(u)<0$, then $\|\nabla u\|_2 > \rho_3${\rm ;}\par
   {\rm(3)} If $I(u)=0$, then either $\|\nabla u\|_2 =0$ or $\|\nabla u\|_2 \geq \rho_3$.
  \end{lem}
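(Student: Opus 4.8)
The plan is to reduce all three assertions to the sign of a single quadratic lower bound for $I(u)$ in the variable $t=\|\nabla u\|_2^2$. First I would start from the definition \eqref{1.12},
\[
  I(u) = a\|\nabla u\|_2^4 + \bigl(b\|\nabla u\|_2^2 - \lambda\|u\|_2^2\bigr) - \|u\|_{4}^{4},
\]
and apply the left inequality in \eqref{1.6}, namely $b\|\nabla u\|_2^2 - \lambda\|u\|_2^2 \geq b_0\|\nabla u\|_2^2$, together with the Sobolev-type estimate \eqref{1.19}, namely $\|u\|_4^4 \leq \Lambda^{-1}\|\nabla u\|_2^4$. This gives
\[
  I(u) \;\geq\; \Bigl(a-\tfrac{1}{\Lambda}\Bigr)\|\nabla u\|_2^4 + b_0\|\nabla u\|_2^2 \;=\; \|\nabla u\|_2^2\Bigl[\bigl(a-\tfrac{1}{\Lambda}\bigr)\|\nabla u\|_2^2 + b_0\Bigr].
\]
Since $0<a<1/\Lambda$ the coefficient $a-1/\Lambda$ is negative, and since $\lambda<b\lambda_1$ we have $b_0>0$ (as noted right after \eqref{1.7}); hence the affine factor in the bracket is strictly decreasing in $\|\nabla u\|_2^2$ and vanishes precisely at $\|\nabla u\|_2^2 = b_0/(1/\Lambda-a) = b_0\Lambda/(1-a\Lambda) = \rho_3^2$.

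From here each part is immediate. For (1), if $0<\|\nabla u\|_2<\rho_3$ then $\|\nabla u\|_2^2>0$ and the bracketed factor is strictly positive, so the displayed lower bound forces $I(u)>0$. For (2), I would observe that whenever $0\leq\|\nabla u\|_2\leq\rho_3$ the same lower bound already gives $I(u)\geq0$; hence $I(u)<0$ can occur only if $\|\nabla u\|_2>\rho_3$. For (3), if $I(u)=0$ and $\|\nabla u\|_2\neq0$, dividing the inequality $0=I(u)\geq\|\nabla u\|_2^2[(a-1/\Lambda)\|\nabla u\|_2^2+b_0]$ by $\|\nabla u\|_2^2>0$ yields $(a-1/\Lambda)\|\nabla u\|_2^2+b_0\leq0$, i.e. $\|\nabla u\|_2^2\geq\rho_3^2$, which is the claim.

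There is essentially no serious obstacle here; the argument is pure bookkeeping. The only points requiring care are fixing the signs from the hypotheses $a<1/\Lambda$ and $\lambda<b\lambda_1$ (so that the quadratic lower bound in $\|\nabla u\|_2^2$ indeed has $\rho_3$ as its unique positive root and $\rho_3$ is well defined and positive), and treating the degenerate case $\|\nabla u\|_2=0$ separately in parts (2) and (3), where it corresponds to $I(u)=0$ and is explicitly permitted by the statement.
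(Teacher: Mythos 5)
Your proof is correct and follows essentially the same route as the paper: both establish the lower bound $I(u)\geq \left[-\left(\tfrac{1}{\Lambda}-a\right)\|\nabla u\|_2^2 + b_0\right]\|\nabla u\|_2^2$ from \eqref{1.6} and \eqref{1.19} and then read off all three parts from the sign of the bracketed factor. No issues.
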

  \begin{proof}
   (1) Noting that $\lambda <b\lambda_1$, $0<a<1/\Lambda$, then using \eqref{1.6}, \eqref{1.9} and \eqref{1.12} for $p=3$, we get
   \begin{eqnarray}\label{3.9}
     I(u) &\geq& \left(a-\frac{1}{\Lambda}\right)\|\nabla u\|_2^4 + b_0 \|\nabla u\|_2^2 \nonumber\\
          &=&  \left[-\left(\frac{1}{\Lambda} -a\right) \|\nabla u\|_2^2 + b_0\right] \|\nabla u\|_2^2 .
   \end{eqnarray}
   Then $0<\|\nabla u\|_2 < \rho_3$ leads to $I(u) >0$.\par
   (2) Since $I(u)<0$, it follows from $\lambda <b\lambda_1$, $0<a<1/\Lambda$ and \eqref{3.9} that
   $$\|\nabla u\|_2^2 > \frac{b_0}{\frac{1}{\Lambda} -a} =\rho_3^2,
   $$
  Then we obtain (2).\par
   (3) If $I(u)=0$ and $\|\nabla u\|_2 \neq 0$, by the same arguments used in the proof of (2), we have $\|\nabla u\|_2 \geq \rho_3$. Thus (3) is proved.
  \end{proof}
\noindent 3.1.2  \emph{Characterization of the potential well for} $p=3$\par\medskip
   The following result concerns with the potential well depth  $d_3 = \inf\limits_{u \in \textit{\textbf{N}}_3} J(u)$, which
reveals the deep relation between $d_3$ and the minimax value of the fiber mapping. We have:
  \begin{prop}
    If $p=3$, $\lambda < b\lambda_1$ and $0<a<1/\Lambda$, then
   \begin{eqnarray}\label{3.10}
    \frac{\Lambda b_0^2}{4\left(1-a\Lambda\right)} \leq d_3 \leq  \frac{\Lambda c_1^2}{4\left(1-a\Lambda\right)}  {\rm ,}
   \end{eqnarray}
   where $b_0$ and $c_1$ appear in $\eqref{1.7}$.\par
   Moreover, we have
   \begin{eqnarray}\label{3.11}
   &&  d_3 = \inf\limits_{u \in \textit{\textbf{N}}_3} J(u) = \inf\limits_{u \in S}\sup\limits_{\tau>0} J(\tau u)\nonumber\\[0.2em]
   && \quad = \inf\limits_{u \in S} \frac{\left(b\|\nabla u\|_2^2 - \lambda \|u\|_2^2\right)^2}{4 \left(\|u\|_{4}^{4} - a\|\nabla u\|_2^4\right)}.
   \end{eqnarray}
 \end{prop}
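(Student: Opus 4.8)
\noindent
The plan is to first establish the variational characterization \eqref{3.11} and then read the two-sided estimate \eqref{3.10} off it. The key structural input, already recorded in Remark 3.3, is that under the hypotheses $p=3$, $0<a<1/\Lambda$, $\lambda<b\lambda_1$ the manifold $\textit{\textbf{N}}_3$ coincides with $\{\sigma_u u:~u\in S\}$: every ray $\tau\mapsto\tau u$ with $u\in S$ meets $\textit{\textbf{N}}_3$ exactly at $\tau=\sigma_u$, while conversely $\textit{\textbf{N}}_3\subseteq S$ and $\sigma_w=1$ for $w\in\textit{\textbf{N}}_3$ (since $I(w)=0$). For $u\in S$ one has $\sigma_u u\in\textit{\textbf{N}}_3$ with $J(\sigma_u u)=\sup_{\tau>0}J(\tau u)$, and for $w\in\textit{\textbf{N}}_3\subseteq S$ one has $J(w)=J(\sigma_w w)=\sup_{\tau>0}J(\tau w)$; hence the value sets $\{J(\sigma_u u):u\in S\}$ and $\{J(w):w\in\textit{\textbf{N}}_3\}$ coincide and so do their infima. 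Invoking the closed form \eqref{3.8} of Proposition 3.2 to rewrite $J(\sigma_u u)=\sup_{\tau>0}J(\tau u)=(b\|\nabla u\|_2^2-\lambda\|u\|_2^2)^2\big/\big(4(\|u\|_4^4-a\|\nabla u\|_2^4)\big)$ then yields all three equalities in \eqref{3.11}.

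For the lower bound in \eqref{3.10} I would estimate the right-hand side of \eqref{3.8} uniformly over $u\in S$. By \eqref{1.6}, $b\|\nabla u\|_2^2-\lambda\|u\|_2^2\geq b_0\|\nabla u\|_2^2\geq 0$, so the squared numerator is at least $b_0^2\|\nabla u\|_2^4$; by \eqref{1.19}, $0<\|u\|_4^4-a\|\nabla u\|_2^4\leq(\tfrac1\Lambda-a)\|\nabla u\|_2^4$ (positive since $u\in S$ and $a<1/\Lambda$). After the common factor $\|\nabla u\|_2^4>0$ cancels, $J(\sigma_u u)\geq b_0^2\big/\big(4(\tfrac1\Lambda-a)\big)=\Lambda b_0^2\big/\big(4(1-a\Lambda)\big)$, whence $d_3\geq\Lambda b_0^2/(4(1-a\Lambda))$. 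Alternatively one may argue on $\textit{\textbf{N}}_3$ directly: \eqref{1.13} with $p=3$ gives $J(u)=\tfrac14(b\|\nabla u\|_2^2-\lambda\|u\|_2^2)\geq\tfrac14 b_0\|\nabla u\|_2^2$, and $\|\nabla u\|_2\geq\rho_3$ by Lemma 3.4(3) together with $0\notin\textit{\textbf{N}}_3$, so that $J(u)\geq\tfrac14 b_0\rho_3^2=\Lambda b_0^2/(4(1-a\Lambda))$.

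For the upper bound I would test with the extremal function $\phi_\Lambda$ of \eqref{1.9}. As observed in \eqref{3.6}, $\phi_\Lambda\in S$, so $\sigma_{\phi_\Lambda}\phi_\Lambda\in\textit{\textbf{N}}_3$, and \eqref{3.8} together with $\|\phi_\Lambda\|_4=1$, $\|\nabla\phi_\Lambda\|_2^4=\Lambda$ gives
\[
 d_3\leq J(\sigma_{\phi_\Lambda}\phi_\Lambda)=\frac{\big(b\|\nabla\phi_\Lambda\|_2^2-\lambda\|\phi_\Lambda\|_2^2\big)^2}{4(1-a\Lambda)}.
\]
Finally \eqref{1.6} yields $b\|\nabla\phi_\Lambda\|_2^2-\lambda\|\phi_\Lambda\|_2^2\leq c_1\|\nabla\phi_\Lambda\|_2^2=c_1\sqrt\Lambda$, so $d_3\leq c_1^2\Lambda/(4(1-a\Lambda))$, which closes \eqref{3.10}.

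Most of the analytic content is already in hand — the fibering construction of $\sigma_u$, the closed form \eqref{3.8}, and the sign/containment facts of Remark 3.3 — so what remains is essentially bookkeeping. The one step deserving care is the identification $\inf_{w\in\textit{\textbf{N}}_3}J(w)=\inf_{u\in S}\sup_{\tau>0}J(\tau u)$ sketched above; everything else reduces to applying the two elementary inequalities \eqref{1.6} and \eqref{1.19} to \eqref{3.8} and to evaluating at $\phi_\Lambda$.
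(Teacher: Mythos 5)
Your proposal is correct and follows essentially the same route as the paper: the upper bound by testing with $\phi_\Lambda$ via \eqref{3.8} and \eqref{1.6}, the lower bound by $b_0$-coercivity (your alternative argument on $\textit{\textbf{N}}_3$ using \eqref{1.13} and Lemma 3.4 is exactly the paper's, and your primary estimate of the quotient in \eqref{3.8} via \eqref{1.6} and \eqref{1.19} is a trivially equivalent reformulation), and \eqref{3.11} from the fibering correspondence of Proposition 3.2 together with $\textit{\textbf{N}}_3\subseteq S$ from Remark 3.3. No gaps.
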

 \begin{proof}
    For $u \in \textit{\textbf{N}}_3$, by applying \eqref{1.13} for $p=3$, and using \eqref{1.6} together with Lemma 3.4 (iii), we get
  \begin{eqnarray*}
    J(u) = \frac{b}{4} \|\nabla u\|_2^2 - \frac{\lambda}{4} \|u\|_2^2 \geq \frac{b_0}{4} \|\nabla u\|_2^2 \geq \frac{b_0}{4}   \rho_3^2,
  \end{eqnarray*}
  and $\rho_3 = \left(\frac{b_0 \Lambda}{1-a\Lambda}\right)^{1/2}$ is chosen in Lemma 3.4. Hence $d_3 \geq \frac{\Lambda b_0^2}{4\left(1-a\Lambda\right)}$.\par
    For $\phi_\Lambda$ satisfying \eqref{1.9}, noting $\phi_\Lambda \in S$ and by Proposition 3.2 (1), there exists a unique number $\sigma_{\phi_\Lambda} >0$ such that $\sigma_{\phi_\Lambda} \phi_\Lambda \in \textit{\textbf{N}}_3$. Then by \eqref{1.6} and applying \eqref{3.8} with $u=\phi_\Lambda$, we deduce that
    \begin{eqnarray*}
       J(\sigma_{\phi_\Lambda} \phi_\Lambda) = \frac{\left(b\|\nabla \phi_\Lambda\|_2^2 - \lambda \|\phi_\Lambda\|_2^2\right)^2}{4 \left(\|\phi_\Lambda\|_{4}^{4} - a\|\nabla \phi_\Lambda\|_2^4\right)}
       \leq \frac{c_1^2 \|\nabla \phi_\Lambda\|_2^4}{4 \left(\|\phi_\Lambda\|_{4}^{4} - a\|\nabla \phi_\Lambda\|_2^4\right)}
       = \frac{c_1^2 \Lambda}{4(1-a\Lambda)},
    \end{eqnarray*}
   where $c_1$ appears in \eqref{1.7}. This implies that $d_3\leq\frac{c_1^2 \Lambda}{4(1-a\Lambda)}$, and \eqref{3.10} is obtained.\par
    Due to Proposition 3.2 and in view of $\textit{\textbf{N}}_3 \subseteq S$ by Remark 3.3, then \eqref{3.11} holds.
   \end{proof}

\begin{coro}
  If $p=3$, $\lambda < b\lambda_1$ and $0<a<1/\Lambda$, then for $u\in \textit{\textbf{N}}_3^{\, -}$, we have
   \begin{eqnarray}\label{3.12}
    b\|\nabla u\|_2^2 - \lam \|u\|_2^2 > 4d_3.
   \end{eqnarray}
\end{coro}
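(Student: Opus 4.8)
The plan is to exploit the variational characterization of $\sigma_u$ from Proposition 3.2 together with the potential well depth formula from Proposition 3.5. Fix $u \in \textit{\textbf{N}}_3^{\,-}$. By Remark 3.3 we know $\textit{\textbf{N}}_3^{\,-} \subseteq S$, so $u \in S$ and hence there is a unique $\sigma_u > 0$ with $\sigma_u u \in \textit{\textbf{N}}_3$; moreover, since $u \in \textit{\textbf{N}}_3^{\,-}$ means $I(u) < 0$, Proposition 3.2 (1) forces $1 \in (\sigma_u, +\infty)$, i.e. $\sigma_u < 1$. The key quantitative input is the closed-form maximum \eqref{3.8}:
\begin{eqnarray*}
 J(\sigma_u u) = \sup_{\tau>0} J(\tau u) = \frac{\left(b\|\nabla u\|_2^2 - \lambda \|u\|_2^2\right)^2}{4\left(\|u\|_4^4 - a\|\nabla u\|_2^4\right)}.
\end{eqnarray*}

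First I would note that $\sigma_u u \in \textit{\textbf{N}}_3$ gives $J(\sigma_u u) \geq d_3$ by definition of $d_3$ in \eqref{1.16}. Next, since $u \in \textit{\textbf{N}}_3^{\,-} \subseteq S$, we have $\|u\|_4^4 - a\|\nabla u\|_2^4 > 0$, and because $u \in \textit{\textbf{N}}_3^{\,-}$ the Nehari identity $I(u) < 0$ rewrites (using \eqref{1.12} with $p=3$) as
\begin{eqnarray*}
 b\|\nabla u\|_2^2 - \lambda\|u\|_2^2 < \|u\|_4^4 - a\|\nabla u\|_2^4.
\end{eqnarray*}
Substituting this strict inequality into the denominator of \eqref{3.8} (the numerator $\big(b\|\nabla u\|_2^2-\lambda\|u\|_2^2\big)^2$ is positive by \eqref{1.6}) yields
\begin{eqnarray*}
 d_3 \leq J(\sigma_u u) = \frac{\left(b\|\nabla u\|_2^2 - \lambda\|u\|_2^2\right)^2}{4\left(\|u\|_4^4 - a\|\nabla u\|_2^4\right)} < \frac{\left(b\|\nabla u\|_2^2 - \lambda\|u\|_2^2\right)^2}{4\left(b\|\nabla u\|_2^2 - \lambda\|u\|_2^2\right)} = \frac{b\|\nabla u\|_2^2 - \lambda\|u\|_2^2}{4},
\end{eqnarray*}
which is exactly \eqref{3.12} after multiplying by $4$. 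One small point to verify along the way is that $b\|\nabla u\|_2^2 - \lambda\|u\|_2^2 > 0$, so that dividing by it preserves the inequality direction; this is immediate from \eqref{1.6} since $\lambda < b\lambda_1$ and $\|\nabla u\|_2 \neq 0$ for $u \in \textit{\textbf{N}}_3^{\,-}$.

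I do not expect a genuine obstacle here; the only thing requiring care is the bookkeeping of which quantity is positive and the direction of the strict inequality when passing from $I(u)<0$ to the bound on the denominator. An alternative route, avoiding \eqref{3.8} altogether, would be to use \eqref{1.13} for $p=3$ to write $J(u) = \tfrac14\big(b\|\nabla u\|_2^2 - \lambda\|u\|_2^2\big) + \tfrac14 I(u)$ for the point $\sigma_u u \in \textit{\textbf{N}}_3$, where $I(\sigma_u u)=0$, giving $J(\sigma_u u) = \tfrac14\big(b\|\nabla(\sigma_u u)\|_2^2 - \lambda\|\sigma_u u\|_2^2\big) = \tfrac{\sigma_u^2}{4}\big(b\|\nabla u\|_2^2 - \lambda\|u\|_2^2\big)$, and then combining $d_3 \leq J(\sigma_u u)$ with $\sigma_u < 1$; this is perhaps the cleanest presentation and I would likely use it.
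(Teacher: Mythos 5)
Your proposal is correct and follows essentially the same route as the paper: both pass through $\textit{\textbf{N}}_3^{\,-}\subseteq S$, the characterization \eqref{3.11} (equivalently \eqref{3.8}) of $d_3$ as $\inf_{u\in S}\sup_{\tau>0}J(\tau u)$, and the strict inequality $0< b\|\nabla u\|_2^2-\lambda\|u\|_2^2 < \|u\|_4^4-a\|\nabla u\|_2^4$ coming from $I(u)<0$ and \eqref{1.6}. The alternative you sketch via $\sigma_u<1$ is also valid but is not what the paper does.
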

\begin{proof}
  Since $u\in \textit{\textbf{N}}_3^{\, -} \subseteq S$ and $\lam < b\lam_1$, by \eqref{1.6} we have
  \begin{eqnarray*}
    \|u\|_4^4 - a\|\nabla u\|_2^4 > b\|\nabla u\|_2^2 - \lam \|u\|_2^2 >0.
  \end{eqnarray*}
  Therefore, \eqref{3.11} leads to
  \begin{eqnarray*}
    d_3 < \frac{1}{4}\left(b\|\nabla u\|_2^2 - \lam \|u\|_2^2\right),\qquad {\rm for}\ u\in \textit{\textbf{N}}_3^{\, -},
  \end{eqnarray*}
  and \eqref{3.12} holds.
\end{proof}

We conclude this subsection by proving the non-emptiness of the potential well $\textit{\textbf{W}}_3^{\,+} = J^{d_3} \cap \textit{\textbf{N}}_3^{\,+}$ and the unstable set $\textit{\textbf{W}}_3^{\,-} = J^{d_3} \cap \textit{\textbf{N}}_3^{\,-}$.

  To this end, we introduce the following notations. Choose
\begin{eqnarray}\label{3.13}
 R_3 =2 \sqrt{\frac{d_3}{b_0}}, \ \ {\rm and}\ \hat{r}_3 = \min\left\{\rho_3,\ \left(-\frac{c_1}{a} + \frac{1}{a}\sqrt{c_1^2 + 4ad_3}\right)^{1/2}\right\},
\end{eqnarray}
 where $\rho_3>0$ is given in Lemma 3.4, and $c_1$ appears in \eqref{1.7}. By virtue of \eqref{3.10}, we note that $R_3 \geq \rho_3$. For $r>0$, we denote by $B_r$ the set $B_r = \left\{u\in H^1_0(\Omega):\,0\leq\|\nabla u\|_2<r\right\}$, and $\bar{B}_r^{\,c} = \left\{u\in H^1_0(\Omega):\,\|\nabla u\|_2 > r\right\}$.
\begin{thm}
 If $0<a<1/\Lambda$, $\lambda < b\lambda_1$, then $\textit{\textbf{W}}_3^{\,+}\neq \emptyset$, $\textit{\textbf{W}}_3^{\,-}\neq \emptyset$ and
\begin{eqnarray}\label{3.14}
  B_{\hat{r}_3} \subseteq \textit{\textbf{W}}_3^{\,+} \subseteq B_{R_3}\cap J^{d_3},\end{eqnarray}
 \begin{eqnarray}\label{3.15}
 \bar{B}_{R_3}^{\,c}\cap J^{d_3} \subseteq \textit{\textbf{W}}_3^{\,-} \subseteq \bar{B}_{\rho_3}^{\,c}\cap J^{d_3},
\end{eqnarray}
where $R_3$, $\hat{r}_3$ are defined by $\eqref{3.13}$, and $\rho_3$ is given in Lemma $3.4$.
\end{thm}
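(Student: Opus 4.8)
The plan is to establish the four set inclusions in \eqref{3.14} and \eqref{3.15}, from which the non-emptiness of $\textit{\textbf{W}}_3^{\,+}$ and $\textit{\textbf{W}}_3^{\,-}$ follows immediately: $0 \in B_{\hat r_3} \subseteq \textit{\textbf{W}}_3^{\,+}$, and for $\textit{\textbf{W}}_3^{\,-}$ one exhibits a concrete element, e.g. $\sigma_{\phi_\Lambda}\phi_\Lambda$ scaled further out (recall from Proposition 3.2 that $\tau\phi_\Lambda \in \textit{\textbf{N}}_3^{\,-}$ for $\tau > \sigma_{\phi_\Lambda}$), or equivalently show that $\bar B_{R_3}^{\,c}\cap J^{d_3}$ is non-empty by taking large multiples of $\phi_\Lambda$ whose $J$-value eventually drops below $d_3$ (since $J(\tau\phi_\Lambda)\to-\infty$) while the Dirichlet norm exceeds $R_3$.

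First I would prove $B_{\hat r_3}\subseteq \textit{\textbf{W}}_3^{\,+}$. Take $u$ with $0\le\|\nabla u\|_2<\hat r_3$. Since $\hat r_3\le\rho_3$, Lemma 3.4(1) gives $I(u)>0$ (and $u=0$ is included by definition), so $u\in\textit{\textbf{N}}_3^{\,+}$. It remains to check $J(u)<d_3$. Using \eqref{1.13} with $p=3$, namely $J(u)=\tfrac14 I(u)+\tfrac14\big(b\|\nabla u\|_2^2-\lambda\|u\|_2^2\big)$, wait — more directly, from \eqref{zh809-4} and the upper bound in \eqref{1.6} one gets $J(u)\le \tfrac{a}{4}\|\nabla u\|_2^4+\tfrac{c_1}{2}\|\nabla u\|_2^2$; plugging $\|\nabla u\|_2^2<\hat r_3^2\le -\tfrac{c_1}{a}+\tfrac1a\sqrt{c_1^2+4ad_3}$ and simplifying the quadratic-in-$\|\nabla u\|_2^2$ expression shows $J(u)<d_3$. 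Hence $u\in J^{d_3}\cap\textit{\textbf{N}}_3^{\,+}=\textit{\textbf{W}}_3^{\,+}$. For the second inclusion in \eqref{3.14}, take $u\in\textit{\textbf{W}}_3^{\,+}$, so $I(u)\ge0$ and $J(u)<d_3$; then by \eqref{1.13} for $p=3$, $J(u)=\tfrac14\big(b\|\nabla u\|_2^2-\lambda\|u\|_2^2\big)+\tfrac14 I(u)\ge \tfrac{b_0}{4}\|\nabla u\|_2^2$, which forces $\|\nabla u\|_2^2<4d_3/b_0=R_3^2$, i.e. $u\in B_{R_3}\cap J^{d_3}$.

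For \eqref{3.15}: if $u\in\textit{\textbf{W}}_3^{\,-}=J^{d_3}\cap\textit{\textbf{N}}_3^{\,-}$, then $I(u)<0$, so by Lemma 3.4(2) $\|\nabla u\|_2>\rho_3$, giving $u\in\bar B_{\rho_3}^{\,c}\cap J^{d_3}$; this is the right-hand inclusion. For the left-hand inclusion, take $u$ with $\|\nabla u\|_2>R_3$ and $J(u)<d_3$. Here I must show $I(u)<0$. Arguing by contradiction: if $I(u)\ge0$ then $u\in\textit{\textbf{N}}_3^{\,+}$ and, as in the previous paragraph, $J(u)\ge\tfrac{b_0}{4}\|\nabla u\|_2^2>\tfrac{b_0}{4}R_3^2=d_3$, contradicting $J(u)<d_3$. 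Therefore $I(u)<0$, so $u\in\textit{\textbf{N}}_3^{\,-}\cap J^{d_3}=\textit{\textbf{W}}_3^{\,-}$. Finally, non-emptiness of $\textit{\textbf{W}}_3^{\,-}$: since $\phi_\Lambda\in S$ and $J(\tau\phi_\Lambda)=K_{\phi_\Lambda}(\tau)\to-\infty$ as $\tau\to+\infty$ (the $-\tfrac{\tau^4}{4}\|\phi_\Lambda\|_4^4$ term dominates, using $\phi_\Lambda\in S$), while $\|\nabla(\tau\phi_\Lambda)\|_2=\tau\|\nabla\phi_\Lambda\|_2\to\infty$, we can pick $\tau$ large so that simultaneously $\|\nabla(\tau\phi_\Lambda)\|_2>R_3$ and $J(\tau\phi_\Lambda)<d_3$; by the left-hand inclusion of \eqref{3.15} this element lies in $\textit{\textbf{W}}_3^{\,-}$.

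The main obstacle I anticipate is the bookkeeping in the first inclusion $B_{\hat r_3}\subseteq\textit{\textbf{W}}_3^{\,+}$: one has to verify that the specific choice $\hat r_3=\min\{\rho_3,\,(-c_1/a+\tfrac1a\sqrt{c_1^2+4ad_3})^{1/2}\}$ is exactly what makes the inequality $\tfrac{a}{4}r^4+\tfrac{c_1}{2}r^2<d_3$ hold for $r=\|\nabla u\|_2<\hat r_3$ — i.e. recognizing that $(-c_1/a+\tfrac1a\sqrt{c_1^2+4ad_3})^{1/2}$ is precisely the positive root of $\tfrac a4 t^2+\tfrac{c_1}{2}t-d_3=0$ in the variable $t=r^2$, so the quadratic is negative below it. The rest is a routine combination of \eqref{1.6}, \eqref{1.13}, Lemma 3.4, and the bounds \eqref{3.10}; the only other point requiring a little care is ensuring $R_3\ge\rho_3$ (already noted in the text via \eqref{3.10}, since $c_1\ge b_0$) so that the inclusions are consistent and the inner and outer sets do not overlap.
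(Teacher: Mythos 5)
Your proposal is correct and follows essentially the same route as the paper: Lemma 3.4 plus the bound $J(u)\le \tfrac{a}{4}\|\nabla u\|_2^4+\tfrac{c_1}{2}\|\nabla u\|_2^2$ for the inner inclusion, the identity $J(u)\ge \tfrac{b_0}{4}\|\nabla u\|_2^2+\tfrac14 I(u)$ for the outer inclusions, and large multiples of $\phi_\Lambda$ to exhibit an element of $\bar B_{R_3}^{\,c}\cap J^{d_3}$. The only cosmetic difference is that you phrase the left inclusion of \eqref{3.15} as a contradiction, whereas the paper derives $I(u)<0$ directly from the same inequality.
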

\begin{proof}
 We begin with the proof of $\textit{\textbf{W}}_3^{\,+}\neq \emptyset$. For $u\in B_{\hat{r}_3}$, we infer from Lemma 3.4 (1) that $u \in \textit{\textbf{N}}_3^{\,+}$. Further, noting $d_3 > 0$, then it follows from \eqref{1.6}, \eqref{zh809-4} and a direct computation that
 \begin{eqnarray}
 J(u) \leq \frac{a}{4}\|\nabla u\|_2^4 + \frac{c_1}{2}\|\nabla u\|_2^2 < d_3, \ \ {\rm for}\ u\in B_{\hat{r}_3}.
 \end{eqnarray}
Thus, we have $B_{\hat{r}_3} \subseteq \textit{\textbf{W}}_3^{\,+}$ and $\textit{\textbf{W}}_3^{\,+}\neq \emptyset$. \par
 By \eqref{1.6} and \eqref{1.13} with $p=3$, we get
 \begin{eqnarray}\label{3.17}
     J(u) \geq  \frac{b_0}{4} \|\nabla u\|_2^2 + \frac{1}{p+1} I(u).
  \end{eqnarray}
  Let $u\in \textit{\textbf{W}}_3^{\,+}$ and $\|\nabla u\|_2 \neq 0$, then $I(u)>0$, and $J(u)<d_3$. So it follows from \eqref{3.17} that $\|\nabla u\|_2 <R_3$. Then we obtain \eqref{3.14}. \par
 Now we turn to prove \eqref{3.15} and $\textit{\textbf{W}}_3^{\,-}\neq \emptyset$. First, we claim that $\bar{B}_{R_3}^{\,c}\cap J^{d_3} \neq \emptyset$.\par
 Indeed, we choose $\phi_\Lambda$ satisfying \eqref{1.9}. Noting that $\phi_\Lambda \in S$ and by Proposition 3.2 (2), we deduce that there exists a unique number $\sigma_{\phi_\Lambda}>0$, such that $J(\tau \phi_\Lambda)$ is increasing for $\tau \in (0, \sigma_{\phi_\Lambda})$, and is decreasing for $\tau \in (\sigma_{\phi_\Lambda}, +\infty)$ when $p=3$, and $J(\sigma_{\phi_\Lambda} \phi_\Lambda) \geq d_3$. Further, by \eqref{3.1}, \eqref{3.6} and $p=3$, we get
\begin{eqnarray}
  J(\tau \phi_\Lambda) = -\frac{\tau^4}{4}(1-a\Lambda) + \frac{\tau^2}{2}\left(b\sqrt{\Lambda} - \lambda \|\phi_\Lambda\|_2^2\right),
\end{eqnarray}
and $\lim\limits_{\tau\rightarrow 0} J(\tau \phi_\Lambda) = 0$, $\lim\limits_{\tau\rightarrow +\infty} J(\tau \phi_\Lambda) = -\infty$ in view of $a\Lambda <1$. The above facts together with $d_3>0$ indicate that there exists $\mu_1 >0$, such that
 $\tau \phi_\Lambda\in J^{d_3}$ for every $\tau > \mu_1$. Selecting $l_1 >\max \left\{R_3\Lambda^{-\frac{1}{4}}, \mu_1\right\}$, then we have $l_1 \phi_\Lambda \in J^{d_3}\cap \bar{B}_{R_3}^{\,c}$ and $\bar{B}_{R_3}^{\,c}\cap J^{d_3} \neq \emptyset$.

 Let $u\in \bar{B}_{R_3}^{\,c}\cap J^{d_3}$, we infer from \eqref{3.13} and \eqref{3.17} that
  \begin{eqnarray}
    d_3 > J(u) \geq \frac{b_0}{4} R_3^2 + I(u) = d_3 + I(u),
  \end{eqnarray}
  which implies that $u\in \textit{\textbf{N}}_3^{\,-} \cap J^{d_3} = \textit{\textbf{W}}_3^{\,-}$. Thus, $\textit{\textbf{W}}_3^{\,-} \neq \emptyset$ and $\bar{B}_{R_3}^{\,c}\cap J^{d_3} \subseteq \textit{\textbf{W}}_3^{\,-}$.\par
  Moreover, by Lemma 3.4 (2) we have $\textit{\textbf{N}}_3^{\,-} \subseteq \bar{B}_{\rho_3}^{\,c}$, and $\textit{\textbf{W}}_3^{\,-} \subseteq \bar{B}_{\rho_3}^{\,c} \cap J^{d_3}$. Then \eqref{3.15} is proved.
 \end{proof}

\subsection{The case of $p=3$, $\lam > b\lam_1$}
3.2.1 \emph{The structure of $\textit{\textbf{N}}_3$ for $\lam > b\lam_1$}\par\medskip
   When $\lam >b\lam_1$, by virtue of $L^- \neq \emptyset$, we show that the structure of $\textit{\textbf{N}}_3$ and the behavior of $J(u)$ differ from that demonstrated in the preceding subsection.
We shall see the assumption $\|\psi_1\|_4^4 -a \|\nabla \psi_1\|_2^4 <0$ (i.e., \eqref{2.11} in Theorem 2.8) plays vital roles in establishing the properties concerning with $\textit{\textbf{N}}_3$ and $J(u)$ when $p=3$ and $\lam>b\lam_1$.\par
   The following lemma determines the value of $\delta>0$ in Theorem 2.8 and concerns the relations between the sets $L^+$, $L^-$ and $S$, where $L^+$, $L^-$ are defined in the introduction and $S$ is defined by \eqref{3.5}.
\begin{lem}
  Suppose that $\eqref{2.11}$ holds and $0<a<1/\Lambda$, then there exists $\delta >0$ such that
   \begin{eqnarray}\label{3.20}
   \overline{L^-} \cap \overline{S} =\{0\},
   \end{eqnarray}
  whenever $b\lam_1 <\lam < b\lam_1 + \delta$.
\end{lem}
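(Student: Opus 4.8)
The plan is to argue by contradiction, exploiting that both $\overline{L^-}$ and $\overline{S}$ are cones (they are closures of superlevel/sublevel sets of functionals homogeneous of degree $2$ and $4$ respectively): a hypothetical nonzero common point can be rescaled to have unit Dirichlet norm, and then a compactness argument based on the compact Sobolev embeddings will pin it down to a scalar multiple of $\psi_1$, contradicting \eqref{2.11}. Two preliminary remarks are needed. First, since $\lam>b\lam_1$ gives $\tau\psi_1\in L^-$ for every $\tau\neq0$ and, by \eqref{3.6} together with $0<a<1/\Lambda$, $\phi_\Lambda\in S$, hence $\tau\phi_\Lambda\in S$ for every $\tau>0$, letting $\tau\to0$ shows $0\in\overline{L^-}\cap\overline{S}$, so only nonzero common points have to be excluded. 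Second, since $u\mapsto b\|\nabla u\|_2^2-\lam\|u\|_2^2$ and $u\mapsto\|u\|_4^4-a\|\nabla u\|_2^4$ are continuous on $H_0^1(\Omega)$, every $u\in\overline{L^-}$ satisfies $b\|\nabla u\|_2^2-\lam\|u\|_2^2\le0$ and every $u\in\overline{S}$ satisfies $\|u\|_4^4-a\|\nabla u\|_2^4\ge0$.

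Suppose the conclusion fails. Then for each $n$ there are $\lam_n\in(b\lam_1,\,b\lam_1+\tfrac1n)$ and $u_n\neq0$ lying in $\overline{L^-}$ (with parameter $\lam_n$) and in $\overline{S}$; since both closures are cones and $u_n\neq0$, I may normalise so that $\|\nabla u_n\|_2=1$, whence by the second remark
\[
 b-\lam_n\|u_n\|_2^2\le0 \qquad\text{and}\qquad \|u_n\|_4^4-a\ge0 .
\]
As $(u_n)$ is bounded in $H_0^1(\Omega)$, along a subsequence $u_n\rightharpoonup u$ in $H_0^1(\Omega)$ and, by compactness of $H_0^1(\Omega)\hookrightarrow L^2(\Omega)$ and $H_0^1(\Omega)\hookrightarrow L^4(\Omega)$, $u_n\to u$ strongly in $L^2(\Omega)$ and in $L^4(\Omega)$. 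Passing to the limit in $b\le\lam_n\|u_n\|_2^2$ and using $\lam_n\to b\lam_1$ gives $\|u\|_2^2\ge1/\lam_1$, whereas weak lower semicontinuity of the Dirichlet norm and the variational characterisation of $\lam_1$ give $1\ge\|\nabla u\|_2^2\ge\lam_1\|u\|_2^2\ge1$. Hence $\|\nabla u\|_2^2=\lam_1\|u\|_2^2=1$, so $u$ realises equality in the Poincar\'{e} inequality and, by simplicity of $\lam_1$, $u=c\,\psi_1$ for some $c\neq0$ (indeed $u\neq0$ since $\|\nabla u\|_2=1$).

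It then remains to pass to the limit in $\|u_n\|_4^4\ge a$: since $\|u_n\|_4\to\|u\|_4$ and $\|\nabla u\|_2^2=1$, one gets $\|u\|_4^4-a\|\nabla u\|_2^4\ge0$, that is $c^4\big(\|\psi_1\|_4^4-a\|\nabla\psi_1\|_2^4\big)\ge0$ with $c\neq0$, which contradicts \eqref{2.11}. This contradiction yields the desired $\delta>0$.

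I expect the only genuine difficulty to be the limiting step, specifically identifying the weak limit $u$ as a nonzero multiple of $\psi_1$: this is exactly where the compact embeddings and the equality case of the Poincar\'{e} inequality (hence simplicity of the principal eigenvalue) are used, and where one must check that both defining inequalities survive weak convergence — the $S$-inequality only in the direction compatible with lower semicontinuity of $\|\nabla\cdot\|_2$, which is harmless here because the rescaling froze $\|\nabla u_n\|_2\equiv1$. The cone/rescaling reduction and the boundedness of $(u_n)$ are routine.
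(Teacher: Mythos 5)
Your proposal is correct and follows essentially the same route as the paper's proof of Lemma 3.8: negate the statement, normalise $\|\nabla u_n\|_2=1$, extract a weak $H_0^1$-limit converging strongly in $L^2$ and $L^4$, identify the limit as a nonzero multiple of $\psi_1$ via the equality case of the Poincar\'{e} inequality, and contradict \eqref{2.11}. The only cosmetic difference is how you force $\|\nabla u\|_2=1$ (a squeeze between $1$ and $\lam_1\|u\|_2^2\ge1$, versus the paper's contradiction with the variational characterisation of $\lam_1$), which is the same argument in substance.
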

\begin{proof}
  We assume that the result is false. Then for any $n\in \mathbb{N}_+$, there exist $\lam_n \in (b\lam_1, ~b\lam_1 + \frac{1}{n})$ and $u_n \in H_0^1 (\Omega)$ such that $\|\nabla u_n\|_2 =1$ and
  \begin{eqnarray}\label{3.21}
    b\|\nabla u_n\|_2^2 - \lam_n \|u_n\|_2^2 &\leq& 0,
     \end{eqnarray}
   \begin{eqnarray}\label{3.22}
    \|u_n\|_4^4 - a\|\nabla u_n\|_2^4 &\geq& 0.
  \end{eqnarray}
Noting that $\{u_n\}$ is bounded in $H_0^1(\Omega)$, we may assume that $u_n \rightharpoonup u^*$ weakly in $H_0^1(\Omega)$, and $u_n \rightarrow u^*$ strongly in $L^2(\Omega)$ and $L^4(\Omega)$, as $n\rightarrow \infty$.

 We show that $u_n \rightarrow u^*$ strongly in $H_0^1(\Omega)$. Suppose otherwise, then $\|\nabla u^*\|_2 < \varliminf\limits_{n\rightarrow \infty} \|\nabla u_n\|_2$. In view of $\lam_n \rightarrow b\lam_1$ and $u_n \rightarrow u^*$ in $L^2(\Omega)$, we deduce from \eqref{3.21} that
  $$\|\nabla u^*\|_2^2 - \lam_1 \|u^*\|_2^2 <0,
  $$
  which contradicts the definition of $\lam_1$. Thus, $u_n \rightarrow u^*$ strongly in $H_0^1(\Omega)$ and so $\|\nabla u^*\|_2 =1$.\par
  Furthermore, we deduce from \eqref{3.21} and \eqref{3.22} that
   \begin{eqnarray}
    \|\nabla u^*\|_2^2 - \lam_1 \|u^*\|_2^2 &\leq& 0, \label{3.23}\\
    \|u^*\|_4^4 - a\|\nabla u^*\|_2^4 &\geq& 0.\label{3.24}
  \end{eqnarray}
Noting \eqref{3.23} implies that $u^* = k_0 \psi_1$ for some $k_0 \in \mathbb{R}$, and then \eqref{3.24} ensures that
$$k_0^4 \left( \|\psi_1\|_4^4 - a\|\nabla \psi_1\|_2^4\right) \geq 0.
$$
Then, it follows from the assumption \eqref{2.11} that $k_0 =0$, which is impossible as $\|\nabla u^*\|_2 =1$. Hence we obtain \eqref{3.20}.
\end{proof}

  By Lemma 3.8, we infer that there exists $\delta>0$ such that $S\subseteq L^+$ and $L^- \subseteq {\overline{S}}^{\,c}$ for any $\lam \in (b\lam_1,\, b\lam_1 +\delta)$. Basing on this fact, we prove the following properties of $\textit{\textbf{N}}_3$ with $p=3$ and $\lam>b\lam_1$, which differ from the case $\lam <b\lam_1$ showed in Proposition 3.2.
\begin{prop}
  Suppose that \eqref{2.11} holds, $0<a<1/\Lambda$ and $b\lam_1 <\lam <b\lam_1 +\delta$, where $\delta>0$ is defined by Lemma $3.8$. Let $\sigma_u$ be the unique number determined by $\eqref{3.7}$, then$:$ \par
  {\rm(i)} $\textit{\textbf{N}}_3 \cap L^0 = \emptyset$ and $0\not\in \overline{\textit{\textbf{N}}_3 \cap L^+};$\par
  {\rm(ii)} $\textit{\textbf{N}}_3 \cap L^+$ is nonempty and closed$;$ Moreover, for every $u\in S$, we have $\sigma_u u \in \textit{\textbf{N}}_3 \cap L^+$, $\tau u \in \textit{\textbf{N}}_3^{\,+}$ for $\tau \in (0, \sigma_u)$ and $\tau u \in \textit{\textbf{N}}_3^{\,-}$ for $\tau \in (\sigma_u, +\infty)${\rm ;}\par
  {\rm(iii)} $\textit{\textbf{N}}_3 \cap L^-$ is nonempty and bounded$;$ Moreover, for every $u\in L^-$, we have $\sigma_u u \in \textit{\textbf{N}}_3 \cap L^-$, $\tau u \in \textit{\textbf{N}}_3^{\,-}$ for $\tau \in (0, \sigma_u)$ and $\tau u \in \textit{\textbf{N}}_3^{\,+}$ for $\tau \in (\sigma_u, +\infty)${\rm .}
 \end{prop}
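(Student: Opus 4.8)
The plan is to analyze the fiber map $K_u(\tau)=J(\tau u)$ and the associated quadratic $h_3(\tau)$ from \eqref{3.4}, exactly as in the case $\lam<b\lam_1$, but tracking carefully whether the coefficients $b\|\nabla u\|_2^2-\lam\|u\|_2^2$ and $\|u\|_4^4-a\|\nabla u\|_2^4$ are positive or negative. The key structural input is Lemma 3.8: choosing $\delta>0$ as there, for $\lam\in(b\lam_1,b\lam_1+\delta)$ we have $S\subseteq L^+$ (so on $S$ the sign of $b\|\nabla u\|_2^2-\lam\|u\|_2^2$ is $+$) and $L^-\subseteq\overline S^{\,c}$ (so on $L^-$, $\|u\|_4^4-a\|\nabla u\|_2^4<0$, i.e. $a\|\nabla u\|_2^4-\|u\|_4^4>0$). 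With these sign patterns the formula \eqref{3.7} for $\sigma_u$ still makes sense: on $S$ both numerator and denominator of \eqref{3.7} are positive; on $L^-$ both are negative, so the ratio is again positive and $\sigma_u>0$ is well defined.

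For (i): if $u\in\textit{\textbf{N}}_3\cap L^0$, then $b\|\nabla u\|_2^2-\lam\|u\|_2^2=0$ while $I(u)=0$ forces $a\|\nabla u\|_2^4-\|u\|_4^4=0$ as well; but $L^0=\{\tau\psi_1:\tau\in\mathbb R\}$, so $u=\tau\psi_1$ and then $\tau^4(\|\psi_1\|_4^4-a\|\nabla\psi_1\|_2^4)=0$, contradicting \eqref{2.11} unless $\tau=0$, which is excluded since $0\notin\textit{\textbf{N}}_3$. For $0\notin\overline{\textit{\textbf{N}}_3\cap L^+}$: on $\textit{\textbf{N}}_3\cap L^+$ one has $I(u)=0$ and $b\|\nabla u\|_2^2-\lam\|u\|_2^2>0$, hence $\|u\|_4^4-a\|\nabla u\|_2^4>0$, so \eqref{1.19} gives $(1/\Lambda-a)\|\nabla u\|_2^4\geq\|u\|_4^4-a\|\nabla u\|_2^4=b\|\nabla u\|_2^2-\lam\|u\|_2^2\geq c_2\|\nabla u\|_2^2$ for a suitable positive constant coming from the reverse Poincaré-type bound valid on $L^+$ near $\psi_1$; this yields a lower bound $\|\nabla u\|_2\geq\rho>0$, so $0$ is not in the closure. (If the clean reverse bound is not available uniformly, one argues by contradiction along a sequence $u_n\in\textit{\textbf{N}}_3\cap L^+$ with $\|\nabla u_n\|_2\to0$, normalizes, and passes to a weak limit exactly as in Lemma 3.8 to land on $L^0$, contradicting (i).)

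For (ii) and (iii): fix $u\in S$. Since $S\subseteq L^+$, the quadratic $h_3(\tau)=(a\|\nabla u\|_2^4-\|u\|_4^4)\tau^2+(b\|\nabla u\|_2^2-\lam\|u\|_2^2)$ has negative leading coefficient and positive constant term, so it is positive on $[0,\sigma_u)$, vanishes at $\sigma_u$, and is negative on $(\sigma_u,+\infty)$; since $I(\tau u)=\tau^2h_3(\tau)$, this gives $\tau u\in\textit{\textbf{N}}_3^{\,+}$ for $\tau<\sigma_u$, $\sigma_uu\in\textit{\textbf{N}}_3$, $\tau u\in\textit{\textbf{N}}_3^{\,-}$ for $\tau>\sigma_u$, and $\sigma_uu\in L^+$ because $b\|\nabla(\sigma_uu)\|_2^2-\lam\|\sigma_uu\|_2^2=\sigma_u^2(b\|\nabla u\|_2^2-\lam\|u\|_2^2)>0$. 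Non-emptiness of $\textit{\textbf{N}}_3\cap L^+$ follows by taking $u=\phi_\Lambda\in S$; closedness and the lower bound $\|\nabla u\|_2\geq\rho$ from (i) give that it is closed. For $u\in L^-$ the leading coefficient $a\|\nabla u\|_2^4-\|u\|_4^4$ is positive and the constant term $b\|\nabla u\|_2^2-\lam\|u\|_2^2$ is negative, so $h_3$ is negative on $[0,\sigma_u)$ and positive on $(\sigma_u,+\infty)$, which flips the picture: $\tau u\in\textit{\textbf{N}}_3^{\,-}$ for $\tau<\sigma_u$ and $\tau u\in\textit{\textbf{N}}_3^{\,+}$ for $\tau>\sigma_u$, and $\sigma_uu\in\textit{\textbf{N}}_3\cap L^-$. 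Non-emptiness of $\textit{\textbf{N}}_3\cap L^-$ follows from $\tau\psi_1\in L^-$ for all $\tau\neq0$ (noted in the introduction) together with \eqref{2.11}, which puts $\psi_1\in\overline S^{\,c}$ so $\sigma_{\psi_1}$ is well defined. Boundedness of $\textit{\textbf{N}}_3\cap L^-$: on this set $a\|\nabla u\|_2^4-\|u\|_4^4=-(b\|\nabla u\|_2^2-\lam\|u\|_2^2)=|b\|\nabla u\|_2^2-\lam\|u\|_2^2|$ by $I(u)=0$, and combining $\|u\|_4^4\leq\Lambda^{-1}\|\nabla u\|_2^4$ with the bound $b\|\nabla u\|_2^2-\lam\|u\|_2^2\geq-(\lam-b)\lam_1^{-1}\|\nabla u\|_2^2$-type estimate gives $(a-\Lambda^{-1})\|\nabla u\|_2^4\geq-(\text{const})\|\nabla u\|_2^2$; since $a<1/\Lambda$ the left side is negative, forcing an upper bound on $\|\nabla u\|_2^2$.

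The main obstacle I anticipate is the two assertions in (i)--(ii) about $0\notin\overline{\textit{\textbf{N}}_3\cap L^+}$ and closedness: unlike the $\lam<b\lam_1$ case, here the coefficient $b\|\nabla u\|_2^2-\lam\|u\|_2^2$ is no longer comparable to $\|\nabla u\|_2^2$ uniformly (it can be small when $u$ is close to the $\psi_1$-direction), so obtaining the strict lower bound $\|\nabla u\|_2\geq\rho$ on $\textit{\textbf{N}}_3\cap L^+$ genuinely needs the compactness/normalization argument of Lemma 3.8 rather than a one-line inequality, and one must check that passing to the weak limit keeps the limit in $L^+\cup L^0$ so that hypothesis \eqref{2.11} can be invoked to rule out the bad limit. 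Everything else is a sign-chasing exercise on $h_3$ using $I(\tau u)=\tau^2h_3(\tau)$.
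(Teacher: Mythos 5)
Your fiber-map sign analysis in (ii) and (iii) is correct and matches the paper, and your fallback instinct that the ``hard'' assertions need the compactness argument of Lemma 3.8 is the right one. But two steps as written are genuinely wrong. First, in (i) you assert $L^0=\{\tau\psi_1:\tau\in\mathbb R\}$; that identification holds only when $\lam=b\lam_1$ (as noted in the introduction). For $\lam>b\lam_1$ the set $L^0=\{u: b\|\nabla u\|_2^2=\lam\|u\|_2^2\}$ is much larger than the span of $\psi_1$, so you cannot conclude $u=\tau\psi_1$ and invoke \eqref{2.11} directly. The correct route (and the paper's) is shorter: $u\in \textit{\textbf{N}}_3\cap L^0$ forces $a\|\nabla u\|_2^4-\|u\|_4^4=0$ as well, hence $u\in\overline{L^-}\cap\overline{S}$, and \eqref{3.20} gives $u=0$, contradicting $0\notin\textit{\textbf{N}}_3$. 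Relatedly, in your normalization argument for $0\notin\overline{\textit{\textbf{N}}_3\cap L^+}$ the contradiction is not ``landing on $L^0$, contradicting (i)'' --- the weak limit $v_0$ of the normalized sequence need not lie in $\textit{\textbf{N}}_3$, so (i) is not applicable; one must show $v_0\neq 0$ (via $b-\lam\|v_0\|_2^2=0$) and that $v_0\in\overline{L^-}\cap\overline{S}$, then contradict \eqref{3.20}.

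Second, your one-line proof of the boundedness of $\textit{\textbf{N}}_3\cap L^-$ does not work: the Sobolev inequality $\|u\|_4^4\le\Lambda^{-1}\|\nabla u\|_2^4$ gives the \emph{lower} bound $a\|\nabla u\|_2^4-\|u\|_4^4\ge(a-\Lambda^{-1})\|\nabla u\|_2^4$ with a \emph{negative} constant, so combining it with $\lam\|u\|_2^2-b\|\nabla u\|_2^2\le(\lam/\lam_1-b)\|\nabla u\|_2^2$ only yields $(a-\Lambda^{-1})\|\nabla u\|_2^4\le C\|\nabla u\|_2^2$, which is vacuous (negative $\le$ positive). To force an upper bound on $\|\nabla u\|_2$ you would need a uniform \emph{positive} lower bound $a\|\nabla u\|_2^4-\|u\|_4^4\ge c\|\nabla u\|_2^4$ on $\textit{\textbf{N}}_3\cap L^-$, which no soft inequality provides. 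This is exactly why the paper runs a contradiction argument: assume $\|\nabla u_n\|_2\to\infty$ on $\textit{\textbf{N}}_3\cap L^-$, set $v_n=u_n/\|\nabla u_n\|_2$, deduce $\|v_n\|_4^4\to a$ from the Nehari identity divided by $\|\nabla u_n\|_2^2$, upgrade weak to strong convergence, and land the limit in $\overline{L^-}\cap\overline{S}\setminus\{0\}$, contradicting \eqref{3.20}. You need to replace your estimate by this compactness argument.
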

\begin{proof}
(i) To show $\textit{\textbf{N}}_3 \cap L^0 = \emptyset$, we suppose by contradiction that $\textit{\textbf{N}}_3 \cap L^0 \neq \emptyset$ and $v \in \textit{\textbf{N}}_3 \cap L^0$.
Then we have
$$a\|\nabla v\|_2^4 - \|v\|_4^4 = \lam \|v\|_2^2 -b\|\nabla v\|_2^2 =0,
$$
and $v\in \overline{L^-} \cap \overline{S}$. Hence \eqref{3.20} implies that $\|\nabla v\|_2 =0$, which contradicts $v\in \textit{\textbf{N}}_3$.\par
Next, we prove $0\not\in \overline{\textit{\textbf{N}}_3 \cap L^+}$. Suppose otherwise; then there exists $u_n \in \textit{\textbf{N}}_3 \cap L^+$ such that $u_n \rightarrow 0$ in $H_0^1(\Omega)$. Then
\begin{eqnarray}\label{3.25}
  0< b\|\nabla u_n\|_2^2 - \lam \|u_n\|_2^2 = \|u_n\|_4^4 -a \|\nabla u_n\|_2^4 \rightarrow 0,\ \ {\rm as}\ n\rightarrow \infty.
\end{eqnarray}

   To get a contradiction, we let
   $$v_n = \frac{u_n}{\|\nabla u_n\|_2}.
   $$
    Since $\|\nabla v_n\|_2 =1$, we may assume that $v_n \rightharpoonup v_0$ weakly in $H_0^1(\Omega)$, $v_n \rightarrow v_0$ strongly in $L^2(\Omega)$ and $L^4(\Omega)$, as $n\rightarrow \infty$. Dividing \eqref{3.25} by $\|\nabla u_n\|_2^2$ gives
   \begin{eqnarray}\label{3.26}
     0 &<& b\|\nabla v_n\|_2^2 - \lam \|v_n\|_2^2 \nonumber \\
     &=& \frac{\|u_n\|_4^4 - a\|\nabla u_n\|_2^4}{\|\nabla u_n\|_2^2} \nonumber\\
     &=& \left(\|v_n\|_4^4 -a\right)\|\nabla u_n\|_2^2 \rightarrow 0,\ \ {\rm as}\ n\, \rightarrow\, \infty.
   \end{eqnarray}
  Thus, it follows from \eqref{3.26} and the fact that $v_n \rightarrow v_0$ in $L^2(\Omega)$ that
  $$0= \lim\limits_{n\rightarrow \infty} \left(b\|\nabla v_n\|_2^2 - \lam \|v_n\|_2^2\right) = b-\lam \|v_0\|_2^2,
  $$
  which implies that $\|\nabla v_0\|_2 \neq 0$.\par
    On the other hand, we deduce from \eqref{3.26} that
    $$b\|\nabla v_0\|_2^2 - \lam \|v_0\|_2^2 \leq \varliminf\limits_{n\rightarrow \infty} \left(b\|\nabla v_n\|_2^2 - \lam \|v_n\|_2^2\right) =0,
    $$
 which implies that $v_0 \in \overline{L^-}$.\par
   Moreover, by virtue of \eqref{3.26}, we find that $\|v_n\|_4^4 >a$ and $\|v_0\|_4^4 \geq a$. Observing that
 $\|\nabla v_0\|_2^2 \leq \varliminf\limits_{n\rightarrow \infty} \|\nabla v_n\|_2^2 =1$, hence we have
 $\|v_0\|_4^4 - a\|\nabla v_0\|_2^4 \geq 0$ and $v_0 \in \overline{L^-} \cap \overline{S}$. Then \eqref{3.20} implies that $\|\nabla v_0\|_2 =0$, which leads to a contradiction. Therefore $0\not\in \overline{\textit{\textbf{N}}_3 \cap L^+}$.

 (ii) For $u\in S$, we deduce from Lemma 3.8 that $u\in L^+$ and $\|\nabla u\|_2 \neq 0$. Using the same argument as in the proof of Proposition 3.2 (1), we obtain $h_3(\sigma_u)= 0$, $h_3(\tau)> 0$ for $\tau \in (0, \sigma_u)$ and $h_3(\tau)< 0$ for $\tau \in (\sigma_u, +\infty)$, where $h_3(\tau)$ is the function defined by \eqref{3.4}. Then it follows that $\sigma_u u \in \textit{\textbf{N}}_3 \cap L^+$ and $\textit{\textbf{N}}_3 \cap L^+ \neq \emptyset$. Moreover, we have $\tau u \in \textit{\textbf{N}}_3^{\,+}$ for $\tau \in (0, \sigma_u)$ and $\tau u \in \textit{\textbf{N}}_3^{\,-}$ for $\tau \in (\sigma_u, +\infty)$.\par
 Next, we show $\textit{\textbf{N}}_3 \cap L^+$ is closed. By (i) we get $\overline{\textit{\textbf{N}}_3 \cap L^+} \subseteq \left(\textit{\textbf{N}}_3 \cap L^+\right) \cup \{0\}$. Then in view of $0\not\in \overline{\textit{\textbf{N}}_3 \cap L^+}$, we have
$\overline{\textit{\textbf{N}}_3 \cap L^+} = \textit{\textbf{N}}_3 \cap L^+$ and prove (ii).\par
  (iii) For $u\in L^-$, noting that $L^- \subseteq {\overline{S}}^{\,c}$, we have $b\|\nabla u\|_2^2 - \lam \|u\|_2^2 <0$ and
$\|u\|_4^4 - a \|\nabla u\|_2^4 <0$. Thus the number $\sigma_u >0$ which appears in \eqref{3.7} is also well defined as $u\in L^-$ and $\lam \in (b\lam_1,\, b\lam_1 +\delta)$. By a direct computation, we deduce from \eqref{3.4} that
  $$h_3(\tau) = \left(a \|\nabla u\|_2^4 - \|u\|_4^4 \right) \left(\tau^2 -\sigma_u ^2\right).
  $$
Therefore $\sigma_u u \in \textit{\textbf{N}}_3 \cap L^-$ and $\textit{\textbf{N}}_3 \cap L^- \neq \emptyset$. Furthermore, noting that $a \|\nabla u\|_2^4 - \|u\|_4^4 >0$, then we have $\tau u \in \textit{\textbf{N}}_3^{\,-}$ for $\tau \in (0, \sigma_u)$ and $\tau u \in \textit{\textbf{N}}_3^{\,+}$ for $\tau \in (\sigma_u, +\infty)$.\par
   Next, we prove that $\textit{\textbf{N}}_3 \cap L^-$ is bounded via a contradiction argument. Suppose that
$\textit{\textbf{N}}_3 \cap L^-$ is unbounded, then there exists a sequence $\{u_n\}\subseteq \textit{\textbf{N}}_3 \cap L^-$ such that $\|\nabla u_n\|_2 \rightarrow \infty$ as $n\rightarrow \infty$, and
\begin{eqnarray}\label{3.27}
  b\|\nabla u_n\|_2^2 -\lam \|u_n\|_2^2 = \|u_n\|_4^4 - a \|\nabla u_n\|_2^4 <0.
\end{eqnarray}

Putting
$$v_n = \frac{u_n}{\|\nabla u_n\|_2},
   $$
 then $\|\nabla v_n\|_2 =1$ and $v_n \in L^-$. We may assume that $v_n \rightharpoonup v_0$
weakly in $H_0^1(\Omega)$, $v_n \rightarrow v_0$ strongly in $L^2(\Omega)$ and $L^4(\Omega)$ as $n\rightarrow \infty$.
  We divide the proof into the following two steps.

  {\emph {Step}} 1: We show that $\|\nabla v_0\|_2 >0$ and $v_0 \in \overline{S}$.\par
   From \eqref{3.27}, we infer that
   \begin{eqnarray}\label{3.28}
     b- \lam \|v_n\|_2^2 = \left(\|v_n\|_4^4 -a\right)\|\nabla u_n\|_2^2.
   \end{eqnarray}
In view of \eqref{1.6} and $v_n \in L^-$, we get
 $$b-\frac{\lam}{\lam_1} \leq b- \lam \|v_n\|_2^2 \leq 0.
 $$
Moreover, noting that $\|\nabla u_n\|_2 \rightarrow \infty$, then we deduce from \eqref{3.28} that $\|v_n\|_4^4 \rightarrow a$ as $n\rightarrow \infty$, and so $\|v_0\|_4^4 =a$. It follows from $\|\nabla v_0\|_2 \leq \varliminf\limits_{n\rightarrow \infty} \|\nabla v_n\|_2 =1$ that $\|v_0\|_4^4 - a\|\nabla v_0\|_2^4 \geq 0$. Thus, by virtue of $\|v_0\|_4^4 =a>0$, we obtain $\|\nabla v_0\|_2 >0$ and $v_0 \in \overline{S}$.\par
    {\emph {Step}} 2: We show that $v_n \rightarrow v_0$ in $H_0^1(\Omega)$ as $n\rightarrow \infty$.\par
    Suppose that this is false. Then $\|\nabla v_0\|_2 < \varliminf\limits_{n\rightarrow \infty} \|\nabla v_n\|_2$. By virtue of $v_n \in L^-$, we have
    \begin{eqnarray}\label{3.29}
      b\|\nabla v_0\|_2^2 - \lam \|v_0\|_2^2 < \varliminf\limits_{n\rightarrow \infty} \left(b\|\nabla v_n\|_2^2 - \lam \|v_n\|_2^2\right)\leq 0.
    \end{eqnarray}
Combining \eqref{3.29} with the result obtained in Step 1, we have $v_0 \in \overline{L^-} \cap \overline{S}$ and $\|\nabla v_0\|_2 >0$, which contradicts \eqref{3.20}.\par
  Hence, it follows from $v_n \in L^-$ and $v_n \rightarrow v_0$ in $H_0^1(\Omega)$ that $b\|\nabla v_0\|_2^2 - \lam \|v_0\|_2^2 \leq 0$ and $v_0 \in \overline{L^-}$, which is impossible. Therefore, we obtain (iii).
\end{proof}
\noindent 3.2.2 \emph{Properties of $J(u)$ on $\textit{\textbf{N}}_3$ for $\lam > b\lam_1$}\par\medskip
   The following lemma concerns with the properties of $J(u)$ on $\textit{\textbf{N}}_3 \cap L^+$ and $\textit{\textbf{N}}_3 \cap L^-$.
\begin{lem}
  Under the conditions of Proposition $3.9$, we have{\rm :}\\[0.2em]
{\rm (1)} $J(u) >0$ for any $u\in \textit{\textbf{N}}_3 \cap L^+${\rm ;}\\
{\rm (2)} every minimizing sequence of $J(u)$ on $\textit{\textbf{N}}_3 \cap L^+$ is bounded in $H_0^1(\Omega)${\rm;}\\
{\rm (3)} there exists $C_1 >0$ such that $-C_1 \leq J(u) <0$ for any $u\in \textit{\textbf{N}}_3 \cap L^-$.
\end{lem}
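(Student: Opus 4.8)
The plan is to base everything on the identity valid on the whole Nehari manifold: since $I(u)=0$ for $u\in\textit{\textbf{N}}_3$, formula \eqref{1.13} with $p=3$ collapses to
$$J(u)=\frac14\bigl(b\|\nabla u\|_2^2-\lambda\|u\|_2^2\bigr),\qquad u\in\textit{\textbf{N}}_3 .$$
Given this, part (1) and the sign statement in part (3) are immediate: for $u\in\textit{\textbf{N}}_3\cap L^+$ the right-hand side is positive by the definition of $L^+$, and for $u\in\textit{\textbf{N}}_3\cap L^-$ it is negative by the definition of $L^-$. For the lower bound in part (3) I would invoke Proposition 3.9(iii), which guarantees that $\textit{\textbf{N}}_3\cap L^-$ is bounded in $H_0^1(\Omega)$, say $\|\nabla u\|_2\le M$ for all such $u$; combining the identity with the Poincaré inequality $\|u\|_2^2\le\lambda_1^{-1}\|\nabla u\|_2^2$ and with $\lambda>b\lambda_1>0$ yields $J(u)\ge-\tfrac{\lambda}{4\lambda_1}\|\nabla u\|_2^2\ge-\tfrac{\lambda M^2}{4\lambda_1}=:-C_1$, as required.

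The only substantial step is part (2): boundedness of minimizing sequences of $J$ on $\textit{\textbf{N}}_3\cap L^+$. I would argue by contradiction. Take $\{u_n\}\subseteq\textit{\textbf{N}}_3\cap L^+$ with $J(u_n)$ bounded and suppose, along a subsequence, $t_n:=\|\nabla u_n\|_2\to\infty$. Normalizing $v_n:=u_n/t_n$ gives $\|\nabla v_n\|_2=1$ and $v_n\in L^+$ (a cone), and after passing to a further subsequence $v_n\rightharpoonup v_0$ weakly in $H_0^1(\Omega)$, $v_n\to v_0$ strongly in $L^2(\Omega)$ and $L^4(\Omega)$. By the identity above, $J(u_n)=\tfrac{t_n^2}{4}\bigl(b-\lambda\|v_n\|_2^2\bigr)$ with $b-\lambda\|v_n\|_2^2>0$; since $J(u_n)$ is bounded and $t_n\to\infty$, this factor tends to $0$, so $b-\lambda\|v_0\|_2^2=0$, and in particular $v_0\neq0$. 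Dividing the relation $I(u_n)=0$ by $t_n^4$ gives $a+t_n^{-2}(b-\lambda\|v_n\|_2^2)-\|v_n\|_4^4=0$, and letting $n\to\infty$ yields $\|v_0\|_4^4=a>0$.

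To finish part (2), weak lower semicontinuity gives $\|\nabla v_0\|_2\le\liminf_n\|\nabla v_n\|_2=1$, hence on one hand $b\|\nabla v_0\|_2^2-\lambda\|v_0\|_2^2\le b-\lambda\|v_0\|_2^2=0$, so $v_0\in\overline{L^-}$, and on the other hand $\|v_0\|_4^4-a\|\nabla v_0\|_2^4=a\bigl(1-\|\nabla v_0\|_2^4\bigr)\ge0$, so $v_0\in\overline{S}$. Thus $v_0\in\bigl(\overline{L^-}\cap\overline{S}\bigr)\setminus\{0\}$, contradicting \eqref{3.20} of Lemma 3.8. Therefore $\{\|\nabla u_n\|_2\}$ is bounded, which proves part (2).

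The main obstacle is making the blow-up analysis in part (2) watertight: one must combine the boundedness of $J(u_n)$ with the Nehari constraint $I(u_n)=0$ in just the right way to extract simultaneously that the normalized weak limit $v_0$ is nonzero, that $\|v_0\|_4^4=a$, and that $v_0$ lies in both closed cones $\overline{L^-}$ and $\overline{S}$, so that Lemma 3.8 can be applied. All the other estimates — the collapse of \eqref{1.13}, the Poincaré bound for part (3), and the cone invariance of $L^+$ — are routine.
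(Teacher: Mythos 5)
Your proposal is correct and follows essentially the same route as the paper: the collapse of $J$ to $\tfrac14\bigl(b\|\nabla u\|_2^2-\lambda\|u\|_2^2\bigr)$ on $\textit{\textbf{N}}_3$ for parts (1) and (3), the boundedness of $\textit{\textbf{N}}_3\cap L^-$ from Proposition 3.9(iii) for the lower bound, and for part (2) the same normalization $v_n=u_n/\|\nabla u_n\|_2$ leading to a nonzero weak limit in $\overline{L^-}\cap\overline{S}$ that contradicts Lemma 3.8. The only (immaterial) differences are that you bound $J$ from below on $\textit{\textbf{N}}_3\cap L^-$ via Poincar\'e rather than via $J(u)\geq-\tfrac{a}{4}\|\nabla u\|_2^4$, and you use only boundedness of $J(u_n)$ where the paper uses convergence to the infimum.
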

 \begin{proof}
  (1) For any $u \in \textit{\textbf{N}}_3 \cap L^+$, we have $\|u\|_4^4 - a \|\nabla u\|_2^4 = b\|\nabla u\|_2^2 -\lam \|u\|_2^2 >0$. Thus by \eqref{zh809-4} with $p=3$, we obtain that
  $$J(u) = \frac{1}{4}\left(b\|\nabla u\|_2^2 -\lam \|u\|_2^2\right) >0,\ \ \ \ {\rm for\ all}\ u \in \textit{\textbf{N}}_3 \cap L^+.
  $$
 (2) Let $\{u_n\}$ be a minimizing sequence of $J(u)$ on $\textit{\textbf{N}}_3 \cap L^+$, then there exists $c\geq 0$ such that
 \begin{eqnarray}\label{3.30}
    b\|\nabla u_n\|_2^2 -\lam \|u_n\|_2^2 = \|u_n\|_4^4 - a \|\nabla u_n\|_2^4 \rightarrow c,\ \ \ {\rm as}\ n\rightarrow \infty.
 \end{eqnarray}

  By a contradiction we assume that $\|\nabla u_n\|_2^2 \rightarrow \infty$ as $n\rightarrow \infty$. Let
   $$v_n = \frac{u_n}{\|\nabla u_n\|_2},
   $$
   thus we may assume that $v_n \rightharpoonup v_0$ weakly in $H_0^1(\Omega)$, $v_n \rightarrow v_0$ strongly in $L^2(\Omega)$ and $L^4(\Omega)$ as $n\rightarrow \infty$. Then it follows from \eqref{3.30} and $\|\nabla u_n\|_2 \rightarrow \infty$ that
 \begin{eqnarray}\label{3.31}
   b-\lam\|v_n\|_2^2 = \left(\|v_n\|_4^4 -a\right)\|\nabla u_n\|_2^2 \rightarrow 0,\ \ \ {\rm as}\ n\rightarrow \infty,
 \end{eqnarray}
 and so $\|v_n\|_4^4 \rightarrow a$ as $n\rightarrow \infty$. Since $v_n \rightarrow v_0$ in $L^4(\Omega)$, we infer that $\|v_0\|_4^4 =a$. Furthermore, we deduce from $\|\nabla v_0\|_2 \leq \varliminf\limits_{n\rightarrow \infty} \|\nabla v_n\|_2 =1$ that $\|v_0\|_4^4 - a\|\nabla v_0\|_2^4 \geq 0$. Thus $v_0 \in \overline{S}$.\par
   On the other hand, by virtue of $v_n \rightharpoonup v_0$ weakly in $H_0^1(\Omega)$ and \eqref{3.31}, we have
   \begin{eqnarray*}
      b\|\nabla v_0\|_2^2 - \lam \|v_0\|_2^2 \leq \varliminf\limits_{n\rightarrow \infty} \left(b\|\nabla v_n\|_2^2 - \lam \|v_n\|_2^2\right)= 0,
    \end{eqnarray*}
 which implies that $v_0 \in \overline{L^-}$. Then by \eqref{3.20} we get $\|\nabla v_0\|_2 =0$, which is impossible as $\|v_0\|_4^4 =a>0$. Therefore, $\{u_n\}$ is bounded.\\[0.2em]
 (3) For any $u \in \textit{\textbf{N}}_3 \cap L^-$, we deduce from \eqref{zh809-4} for $p=3$ and $\|u\|_4^4 - a \|\nabla u\|_2^4 = b\|\nabla u\|_2^2 -\lam \|u\|_2^2 <0$ that
  $$J(u) = \frac{1}{4}\left(\|u\|_4^4 - a \|\nabla u\|_2^4\right) <0.
  $$

  Moreover, by virtue of Proposition 3.9 (iii), we find that there exists $C>0$ such that $\|\nabla u\|_2 \leq C$ for all $u \in \textit{\textbf{N}}_3 \cap L^-$, and
  $$J(u) \geq -\frac{a}{4}\|\nabla u\|_2^4 \geq -\frac{a}{4} C^4.
  $$
  Hence we prove (3).
 \end{proof}

    From Lemma 3.10, we can see that $d_3^{\,+} = \inf\limits_{u\in \textit{\textbf{N}}_3 \cap L^+} J(u)$ and
 $d_3^{\,-} = \inf\limits_{u\in \textit{\textbf{N}}_3 \cap L^-} J(u)$ are well defined.\par\medskip
  At the end of this subsection, we give the following lemma to show the relation between $d_3^{\,-}$ and $\textit{\textbf{N}}_3^{\,-}$ for $\lam >b\lam_1$.
 \begin{lem}
  Under the conditions of Proposition $3.9$, we have{\rm :}
  {\rm(1)} $d_3^{\,-} <0< d_3^{\,+}${\rm;} {\rm(2)} $\textit{\textbf{N}}_3^{\,-} \cap J^{d_3^{\,-}} \neq \emptyset${\rm;}
  {\rm(3)} For any $u\in \textit{\textbf{N}}_3^{\,-},$
  \begin{eqnarray}\label{3.32}
    d_3^{\,-}< \frac{1}{4}\left(b\|\nabla u\|_2^2 -\lam \|u\|_2^2\right).
  \end{eqnarray}
\end{lem}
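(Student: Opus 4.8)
The plan is to prove the three assertions of Lemma 3.11 in order, relying on the structural results of Proposition 3.9 and the sign/boundedness estimates of Lemma 3.10.

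For part (1), the inequality $d_3^- < 0$ is quick: since $\lam > b\lam_1$ we have $\psi_1 \in L^-$, so Proposition 3.9(iii) yields $\sigma_{\psi_1}\psi_1 \in \textit{\textbf{N}}_3 \cap L^-$, and Lemma 3.10(3) gives $d_3^- \le J(\sigma_{\psi_1}\psi_1) < 0$. The substantive point is $d_3^+ > 0$. By Lemma 3.10(1) we already know $d_3^+ \ge 0$, so I would argue by contradiction, assuming $d_3^+ = 0$ and choosing a minimizing sequence $\{u_n\} \subset \textit{\textbf{N}}_3 \cap L^+$. By Lemma 3.10(2) it is bounded in $H_0^1(\Omega)$, so up to a subsequence $u_n \rightharpoonup u_0$ in $H_0^1(\Omega)$ with strong convergence in $L^2(\Omega)$ and $L^4(\Omega)$. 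Since each $u_n \in \textit{\textbf{N}}_3$, the identity $J(u_n) = \frac{1}{4}(b\|\nabla u_n\|_2^2 - \lam\|u_n\|_2^2) = \frac{1}{4}(\|u_n\|_4^4 - a\|\nabla u_n\|_2^4) \to 0$ lets me deduce $\|\nabla u_n\|_2^2 \to t_0$ with $a t_0^2 = \|u_0\|_4^4$ and $b t_0 = \lam\|u_0\|_2^2$. If $u_0 = 0$, then $t_0 = 0$, hence $u_n \to 0$ strongly in $H_0^1(\Omega)$, contradicting $0 \notin \overline{\textit{\textbf{N}}_3 \cap L^+}$ from Proposition 3.9(i); so $u_0 \neq 0$. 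Then weak lower semicontinuity gives $b\|\nabla u_0\|_2^2 - \lam\|u_0\|_2^2 \le bt_0 - \lam\|u_0\|_2^2 = 0$ and $\|u_0\|_4^4 - a\|\nabla u_0\|_2^4 \ge \|u_0\|_4^4 - a t_0^2 = 0$, i.e. $u_0 \in \overline{L^-} \cap \overline{S}$ with $u_0 \neq 0$, contradicting \eqref{3.20}. This forces $d_3^+ > 0$.

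For part (2), the idea is to exhibit one explicit member of $\textit{\textbf{N}}_3^{\,-} \cap J^{d_3^-}$ along a ray through $\phi_\Lambda$ from \eqref{1.9} (which lies in $S$ by \eqref{3.6}). By Proposition 3.9(ii), $\tau \phi_\Lambda \in \textit{\textbf{N}}_3^{\,-}$ for every $\tau > \sigma_{\phi_\Lambda}$, while along this ray $J(\tau\phi_\Lambda) = -\frac{\tau^4}{4}(1 - a\Lambda) + \frac{\tau^2}{2}(b\sqrt{\Lambda} - \lam\|\phi_\Lambda\|_2^2) \to -\infty$ because $a\Lambda < 1$. Choosing $\tau$ large enough that both $\tau > \sigma_{\phi_\Lambda}$ and $J(\tau\phi_\Lambda) < d_3^-$ gives the desired nonempty intersection.

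For part (3), I would split according to the sign of $b\|\nabla u\|_2^2 - \lam\|u\|_2^2$. If $u \in \textit{\textbf{N}}_3^{\,-}$ with $u \in L^+ \cup L^0$, then $b\|\nabla u\|_2^2 - \lam\|u\|_2^2 \ge 0 > 4d_3^-$ by part (1), so \eqref{3.32} is immediate. If $u \in \textit{\textbf{N}}_3^{\,-} \cap L^-$, then Proposition 3.9(iii) gives $\sigma_u > 1$ and $\sigma_u u \in \textit{\textbf{N}}_3 \cap L^-$, so using the identity $J(w) = \frac{1}{4}(b\|\nabla w\|_2^2 - \lam\|w\|_2^2)$ valid on $\textit{\textbf{N}}_3$ we get $d_3^- \le J(\sigma_u u) = \frac{\sigma_u^2}{4}(b\|\nabla u\|_2^2 - \lam\|u\|_2^2)$; since this last factor is negative and $\sigma_u^2 > 1$, multiplication reverses the inequality and yields $d_3^- \le \frac{\sigma_u^2}{4}(b\|\nabla u\|_2^2 - \lam\|u\|_2^2) < \frac{1}{4}(b\|\nabla u\|_2^2 - \lam\|u\|_2^2)$, which is \eqref{3.32}. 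The main obstacle is the compactness argument in part (1) for $d_3^+ > 0$: one must carefully combine the boundedness of minimizing sequences (Lemma 3.10(2)), the strong $L^2$–$L^4$ convergence, weak lower semicontinuity of the Dirichlet norm, and the separation property \eqref{3.20}; once Proposition 3.9 is available, parts (2) and (3) are short.
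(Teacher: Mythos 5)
Your proof is correct and follows essentially the same route as the paper: for $d_3^{\,+}>0$ a contradiction argument with a bounded minimizing sequence resolved by the separation property $\overline{L^-}\cap\overline{S}=\{0\}$, for (2) a ray in $S$ along which $J\to-\infty$, and for (3) the case split on $L^+\cup L^0$ versus $L^-$ combined with $d_3^{\,-}\leq J(\sigma_u u)$. The only variations are cosmetic: in (1) you bypass the paper's strong $H_0^1$-convergence step by using weak lower semicontinuity directly (handling $u_0=0$ separately via Proposition 3.9(i)), and in (3) you exploit $\sigma_u>1$ and the identity $J=\tfrac14(b\|\nabla\cdot\|_2^2-\lambda\|\cdot\|_2^2)$ on $\textit{\textbf{N}}_3$ rather than the explicit quotient formula; both are valid.
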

 \begin{proof} (1) It follows from Lemma 3.10 (3) that $d_3^{\,-}<0$, and it suffices for us to prove that $d_3^{\,+}>0$.\par If this is false, then by virtue of Lemma 3.10 (1), we have $\inf\limits_{u\in \textit{\textbf{N}}_3 \cap L^+} J(u) = 0$. Let $\{u_n\}$ be a minimizing sequence of $J(u)$ on $\textit{\textbf{N}}_3 \cap L^+$, then
 \begin{eqnarray}\label{3.33}
   b\|\nabla u_n\|_2^2 -\lam \|u_n\|_2^2 = \|u_n\|_4^4 - a \|\nabla u_n\|_2^4 \rightarrow 0 \qquad {\rm as}\ n\rightarrow \infty.
 \end{eqnarray}
 In view of Lemma 3.10 (2), we may assume that $u_n \rightharpoonup \hat{u}$
weakly in $H_0^1(\Omega)$, $u_n \rightarrow \hat{u}$ strongly in $L^2(\Omega)$ and $L^4(\Omega)$. Furthermore, we can prove that $u_n \rightarrow \hat{u}$ in $H_0^1(\Omega)$ by the similar manner as used in the Step 2 of Proposition 3.9 (iii). Then, it follows from Proposition 3.9 (ii) that $\|\nabla \hat{u}\|_2 \neq 0$.\par
    On the other hand, we deduce from \eqref{3.33} and $u_n \rightarrow \hat{u}$ in $H_0^1(\Omega)$ that
    \begin{eqnarray*}
   b\|\nabla \hat{u}\|_2^2 -\lam \|\hat{u}\|_2^2 = \|\hat{u}\|_4^4 - a \|\nabla \hat{u}\|_2^4 =0.
 \end{eqnarray*}
Then by virtue of \eqref{3.20}, we have $\hat{u} \in \overline{L^{\,-}} \cap \overline{S} = \{0\}$ which contradicts $\|\nabla \hat{u}\|_2 \neq 0$, and we obtain (1).\par
(2) Let $u\in S$, then it follows from \eqref{3.20} that $\|u\|_4^4 - a \|\nabla u\|_2^4 >0$ and $b\|\nabla u\|_2^2 -\lam \|u\|_2^2 >0$. Now we choose $k>0$ such that $ku \in \textit{\textbf{N}}_3^{\,-} \cap J^{d_3^{\,-}}$. \par
   By \eqref{1.12} with $p=3$, we get
   $$I(ku) = -k^2 \left(\|u\|_4^4 - a \|\nabla u\|_2^4\right) (k^2 - \sigma_u^2),
   $$
 where $\sigma_u >0$ is the number determined by \eqref{3.7}. Hence we have $I(ku) <0$ for $k>\sigma_u$.\par
    Furthermore, we deduce from \eqref{zh809-4} that
    $$J(ku) = -\frac{k^2}{4} \left(\|u\|_4^4 - a \|\nabla u\|_2^4\right) (k^2 - 2\sigma_u^2).
    $$
 Thus, there exists $K_1 >\sqrt{2}\sigma_u$ such that $J(ku) < d_3^{\, -}$ for $k>K_1$, which leads to
 $ku \in \textit{\textbf{N}}_3^{\,-} \cap J^{d_3^{\,-}}$ for $k>K_1$. Hence we prove (2).\par
    (3) If $u\in \textit{\textbf{N}}_3^{\,-} \cap \overline{L^+}$, then we deduce from (1) that
 $b\|\nabla u\|_2^2 -\lam \|u\|_2^2 \geq 0 >4d_3^{\, -}$.\par
   If $u\in \textit{\textbf{N}}_3^{\,-} \cap L^-$, noting $\|\nabla u\|_2 \neq 0$ and \eqref{3.20}, then we have $u\in {\overline{S}}^{\,c}$. Hence, it follows from $I(u)<0$ that
   \begin{eqnarray}\label{3.34}
     b\|\nabla u\|_2^2 -\lam \|u\|_2^2 < \|u\|_4^4 - a \|\nabla u\|_2^4 <0.
   \end{eqnarray}
On the other hand, in view of Proposition 3.9 (iii), we know that for any $u\in L^-$, there exists a unique number
 $\sigma_u >0$ such that $\sigma_u u \in \textit{\textbf{N}}_3 \cap L^-$, and so
 \begin{eqnarray}\label{3.35}
   d_3^{\,-} \leq J(\sigma_u u) = \frac{\left(b\|\nabla u\|_2^2 -\lam \|u\|_2^2\right)^2}{4\left(\|u\|_4^4 - a \|\nabla u\|_2^4\right)}.
 \end{eqnarray}
 Combining \eqref{3.34} and \eqref{3.35}, we have
$4d_3^{\,-}< b\|\nabla u\|_2^2 -\lam \|u\|_2^2$ for any $u\in \textit{\textbf{N}}_3^{\,-} \cap L^-$. Thus, for all $u\in \textit{\textbf{N}}_3^{\,-}$ we obtain \eqref{3.32}.
 \end{proof}

\section{Structure of Nehari manifold $\textit{\textbf{N}}_p$ for $3<p<5$, $\lam \leq b\lam_1$}
\setcounter{equation}{0}
\label{}\ \ \ \ \ This section is devoted to show the structure of Nehari manifold $\textit{\textbf{N}}_p$ for $3<p<5$.
\subsection{The structure of $\textit{\textbf{N}}_p$ for $3<p<5$ and $\lam \leq b\lam_1$}
 To begin with, we study the behavior of fiber mapping $K_u : \mathbb{R}^+ \rightarrow \mathbb{R}$ defined by \eqref{3.1} for $3<p<5$.
\begin{lem}
 Let $3<p<5$ and $\lambda \leq b\lambda_1$, then for every $u\in H^1_0(\Omega)\backslash \{0\}$, there holds{\rm :}\par
 {\rm (i)} there is a unique number $\tau_u>0$ depending on $u$, such that $K_u'\left(\tau_u\right) = 0${\rm ;}\par
 {\rm (ii)} $K_u'\left(\tau\right) > 0$ for $0< \tau< \tau_u${\rm,} and $K_u'\left(\tau\right) < 0$ for $\tau > \tau_u${\rm ;}\par
 {\rm (iii)} $K_u''\left(\tau_u\right) < 0$, and $K_u(\tau_u) =  \sup\limits_{\tau >0}K_u(\tau)$.
\end{lem}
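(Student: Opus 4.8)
The plan is to reduce all three assertions to an elementary one–variable analysis of the function obtained by factoring $\tau$ out of $K_u'$. For fixed $u \in H^1_0(\Omega)\setminus\{0\}$ and $\tau>0$ I would write
$$K_u'(\tau) = \tau\, g_u(\tau), \qquad g_u(\tau) := a\tau^2\|\nabla u\|_2^4 + \left(b\|\nabla u\|_2^2 - \lam\|u\|_2^2\right) - \tau^{p-1}\|u\|_{p+1}^{p+1},$$
so that the positive zeros of $K_u'$ are exactly those of $g_u$ and $K_u'$ has the same sign as $g_u$ on $(0,\infty)$. Since $u\neq 0$ we have $\|\nabla u\|_2>0$ and $\|u\|_{p+1}>0$; and since $\lam\leq b\lam_1$, estimate \eqref{1.6} gives $b\|\nabla u\|_2^2 - \lam\|u\|_2^2 \geq b_0\|\nabla u\|_2^2 \geq 0$, a quantity that can vanish only in the borderline case $\lam=b\lam_1$ with $u$ a multiple of $\psi_1$ (i.e. $u\in L^0$).

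The second step is to show that $g_u$ is positive near $0$, rises to a unique interior maximum, then strictly decreases to $-\infty$, hence has exactly one positive zero. One has $g_u(0) = b\|\nabla u\|_2^2 - \lam\|u\|_2^2 \geq 0$, $g_u(\tau)\to-\infty$ as $\tau\to\infty$ (because $p-1>0$), and
$$g_u'(\tau) = \tau\left[2a\|\nabla u\|_2^4 - (p-1)\tau^{p-3}\|u\|_{p+1}^{p+1}\right].$$
Here the hypothesis $p>3$ enters essentially: $\tau\mapsto\tau^{p-3}$ is strictly increasing, so the bracket changes sign exactly once, at $\tau^* = \big(2a\|\nabla u\|_2^4/((p-1)\|u\|_{p+1}^{p+1})\big)^{1/(p-3)}$, whence $g_u$ is strictly increasing on $(0,\tau^*)$ and strictly decreasing on $(\tau^*,\infty)$. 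Consequently $g_u>0$ on $(0,\tau^*]$ and $g_u$ decreases strictly from $g_u(\tau^*)>0$ to $-\infty$ on $[\tau^*,\infty)$, so there is a unique $\tau_u\in(\tau^*,\infty)$ with $g_u(\tau_u)=0$, and $g_u>0$ on $(0,\tau_u)$, $g_u<0$ on $(\tau_u,\infty)$. Translating back through $K_u'(\tau)=\tau g_u(\tau)$ gives (i) and (ii), and since $K_u'$ passes from positive to negative at $\tau_u$, also $K_u(\tau_u)=\sup_{\tau>0}K_u(\tau)$, which is the $\sup$ part of (iii).

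For the last assertion $K_u''(\tau_u)<0$, the plan is to use the stationarity identity to eliminate the highest-order term. From $K_u'(\tau_u)=0$, dividing by $\tau_u$, one gets $a\tau_u^2\|\nabla u\|_2^4 + (b\|\nabla u\|_2^2 - \lam\|u\|_2^2) = \tau_u^{p-1}\|u\|_{p+1}^{p+1}$; substituting this into \eqref{3.3} yields
$$K_u''(\tau_u) = 3a\tau_u^2\|\nabla u\|_2^4 + \left(b\|\nabla u\|_2^2 - \lam\|u\|_2^2\right) - p\,\tau_u^{p-1}\|u\|_{p+1}^{p+1} = (3-p)\,a\tau_u^2\|\nabla u\|_2^4 + (1-p)\left(b\|\nabla u\|_2^2 - \lam\|u\|_2^2\right),$$
which is strictly negative: the first term is negative because $p>3$, $a>0$, $\tau_u>0$, $\|\nabla u\|_2>0$, and the second is nonpositive because $p>1$ and $b\|\nabla u\|_2^2 - \lam\|u\|_2^2\geq 0$. (Equivalently, $K_u''(\tau_u)=\tau_u g_u'(\tau_u)<0$ since $\tau_u>\tau^*$.) I do not expect a real obstacle here; the only point needing care is the degenerate case $\lam=b\lam_1$, $u\in L^0$, where $b\|\nabla u\|_2^2-\lam\|u\|_2^2=0$ — but every strict inequality above is supplied by the $a$-terms together with $p>3$, so the argument goes through, and this is exactly why the statement needs $p>3$ rather than $p\geq 3$.
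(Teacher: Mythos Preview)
Your proof is correct and follows essentially the same approach as the paper: the paper's auxiliary function $h_p(\tau)$ is precisely your $g_u(\tau)$, its stationary point $\tau_0$ is your $\tau^*$, and the substitution argument for $K_u''(\tau_u)<0$ is identical. Your alternative remark that $K_u''(\tau_u)=\tau_u g_u'(\tau_u)<0$ (via $g_u(\tau_u)=0$ and $\tau_u>\tau^*$) is a nice shortcut the paper does not mention, and your explicit handling of the borderline case $\lam=b\lam_1$, $u\in L^0$ is slightly more careful than the paper's.
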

\begin{proof}
 (i)  For $3<p<5$ and every $u\in H^1_0(\Omega)\backslash \{0\}$, let
  \begin{eqnarray*}
    h_p(\tau)= a\tau^2 \|\nabla u\|_2^4 + b \|\nabla u\|_2^2 - \lambda \|u\|_2^2 - \tau^{p-1} \|u\|_{p+1}^{p+1}.
  \end{eqnarray*}
     From \eqref{3.2} we have $K_u'(\tau)=\tau h_p (\tau)$. We will prove that there is a unique $\tau_u >0$ such that $h_p (\tau_u)=0$.\par
     Since $3<p<5$, $\|\nabla u\|_2 \neq 0$ and
  \begin{eqnarray*}
    h'_p(\tau) = 2a\tau \|\nabla u\|_2^4 - (p-1)\tau^{p-2} \|u\|_{p+1}^{p+1},
  \end{eqnarray*}
 we infer that $h_p(\tau)$ admits a unique stationary point
 \begin{eqnarray}\label{4.1}
   \tau_0 = \left[\frac{2a\|\nabla u\|_2^4}{(p-1)\|u\|_{p+1}^{p+1}}\right]^{\frac{1}{p-3}} >0,
 \end{eqnarray}
 and $h''_p(\tau_0) = 2a \|\nabla u\|_2^4 - (p-1)(p-2)\tau_0^{p-3} \|u\|_{p+1}^{p+1} = 2a(3-p)\|\nabla u\|_2^4  < 0$.
 Thus $h_p(\tau)$ is strictly increasing for $\tau\in [0,\tau_0)$ and strictly decreasing for $\tau\in (\tau_0, +\infty)$. We can check that $h_p (\tau)$ attains its maximum at $\tau_0$ with
  \begin{eqnarray*}
    h_p(\tau_0) &=& \tau_0^2 \left [ a\|\nabla u\|_2^4 - \tau_0^{p-3} \|u\|_{p+1}^{p+1} \right] + b \|\nabla u\|_2^2 - \lambda \|u\|_2^2 \\
    &=& a\tau_0^2 \|\nabla u\|_2^4 \left ( 1 - \frac{2}{p-1} \right) + b \|\nabla u\|_2^2 - \lambda \|u\|_2^2 \\
    &\geq& \frac{p-3}{p-1} a\tau_0^2 \|\nabla u\|_2^4 + b_0 \|\nabla u\|_2^2 \ >\ 0,
  \end{eqnarray*}
  by virtue of \eqref{1.6} and $3<p<5$.\par
  Moreover, it follows from \eqref{1.6} and $p>3$ that $h_p(0) = b \|\nabla u\|_2^2 - \lambda \|u\|_2^2 \geq b_0 \|\nabla u\|_2^2 \geq 0$, and $h_p(\tau) \rightarrow -\infty$ as $\tau \rightarrow +\infty$. Hence, we show that there is a unique $\tau_u >0$ such that $h_p (\tau_u)=0$. Then (i) holds. Furthermore, we have $\tau_u >\tau_0$, where $\tau_0$ is determined by \eqref{4.1}.\par
    (ii) By (i) we have $h_p (\tau) > 0$ for $0< \tau< \tau_u$, and $h_p (\tau) < 0$ for $\tau > \tau_u$. Thus, keeping in mind that $K_u'(\tau)=\tau h_p (\tau)$ we obtain (ii). \par
    (iii) From (i), $K_u'(\tau_u)=0$, then
   \begin{eqnarray}\label{4.2}
    \tau_u^{p-1} \|u\|_{p+1}^{p+1} = a\tau_u^2 \|\nabla u\|_2^4 + b \|\nabla u\|_2^2 - \lambda \|u\|_2^2.
   \end{eqnarray}
  Combining with \eqref{3.3}, \eqref{4.2} and \eqref{1.6}, we have
  \begin{eqnarray*}
   K''_u(\tau_u) &=& -(p-3)\,a\, \tau_u^2 \|\nabla u\|_2^4 -(p-1)\,b\, \|\nabla u\|_2^2 + (p-1)\,\lambda\, \|u\|_2^2\\
     &\leq&
      -(p-3)\,a\, \tau_u^2\, \|\nabla u\|_2^4 -(p-1)b_0 \|\nabla u\|_2^2 \ <\ 0,
  \end{eqnarray*}
and $K_u(\tau_u) =  \sup\limits_{\tau >0}K_u(\tau)$.
\end{proof}

  Basing on Lemma 4.1, we have
  \begin{prop}
    Let $3<p<5$,  $\lambda \leq b\lambda_1$, and $\tau_u >0$ be the unique constant given in {\rm Lemma} $4.1$, then for every $u\in H^1_0(\Omega)\backslash \{0\}$, there holds{\rm :}\par
 {\rm (1)} $\tau_u u \in \textit{\textbf{N}}_p$ and $J(\tau_u u) = \sup\limits_{\tau >0} J(\tau u)$. Moreover, $\tau u \in \textit{\textbf{N}}_p^{\,+}$ for $\tau \in (0, \tau_u)$ and $\tau u \in \textit{\textbf{N}}_p^{\,-}$ for $\tau \in (\tau_u, +\infty)${\rm ;}

 {\rm (2)} $J(\tau u)$ is increasing for $\tau \in (0, \tau_u)$, and is decreasing for $\tau \in (\tau_u, +\infty);$

 {\rm (3)} $\tau_u$ is decreasing with respect to $\lam$ on $(-\infty, b\lam_1]$.
  \end{prop}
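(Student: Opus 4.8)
The plan is to obtain parts (1) and (2) as immediate consequences of Lemma 4.1, and to isolate part (3) as the only statement requiring a genuinely new (though elementary) argument.

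For parts (1) and (2) I would start from the two identities recorded just below \eqref{3.3}, namely $J(\tau u) = K_u(\tau)$ and $I(\tau u) = \tau K_u'(\tau)$, valid for all $u\in H^1_0(\Omega)\backslash\{0\}$ and $\tau>0$. Lemma 4.1(i) gives $K_u'(\tau_u)=0$, hence $I(\tau_u u)=0$ and (since $\tau_u u\neq 0$) $\tau_u u\in \textit{\textbf{N}}_p$. Lemma 4.1(ii) says $K_u'>0$ on $(0,\tau_u)$ and $K_u'<0$ on $(\tau_u,+\infty)$, so $I(\tau u)=\tau K_u'(\tau)>0$ for $\tau\in(0,\tau_u)$, i.e. $\tau u\in \textit{\textbf{N}}_p^{\,+}$, and $I(\tau u)<0$ for $\tau\in(\tau_u,+\infty)$, i.e. $\tau u\in \textit{\textbf{N}}_p^{\,-}$. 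The same sign information makes $K_u$, hence $\tau\mapsto J(\tau u)$, strictly increasing on $(0,\tau_u)$ and strictly decreasing on $(\tau_u,+\infty)$, which is part (2); together with Lemma 4.1(iii) it also yields $J(\tau_u u)=K_u(\tau_u)=\sup\limits_{\tau>0}K_u(\tau)=\sup\limits_{\tau>0}J(\tau u)$, finishing part (1).

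For part (3) I would fix $u\in H^1_0(\Omega)\backslash\{0\}$ and, following the notation in the proof of Lemma 4.1, write $h_p(\tau;\lam)=a\tau^2\|\nabla u\|_2^4+b\|\nabla u\|_2^2-\lam\|u\|_2^2-\tau^{p-1}\|u\|_{p+1}^{p+1}$, so that $\tau_u=\tau_u(\lam)$ is the unique positive zero of $h_p(\cdot;\lam)$. The key observation is that the $\lam$-dependence is affine: $h_p(\tau;\lam)=g(\tau)-\lam\|u\|_2^2$, where $g(\tau):=a\tau^2\|\nabla u\|_2^4+b\|\nabla u\|_2^2-\tau^{p-1}\|u\|_{p+1}^{p+1}$ is independent of $\lam$, and the stationary point $\tau_0$ in \eqref{4.1} depends only on $u$. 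Since $g$ is strictly decreasing on $(\tau_0,+\infty)$ and Lemma 4.1 guarantees $\tau_u(\lam)>\tau_0$, the root is characterized by $g(\tau_u(\lam))=\lam\|u\|_2^2$ with $\tau_u(\lam)$ lying on the strictly decreasing branch of $g$; as $\lam$ increases on $(-\infty,b\lam_1]$ the right-hand side strictly increases (because $\|u\|_2>0$), so $\tau_u(\lam)$ strictly decreases. Alternatively, differentiating $h_p(\tau_u(\lam);\lam)=0$ and using $\partial_\tau h_p(\tau_u;\lam)=h_p'(\tau_u;\lam)<0$ (the zero lies beyond the maximum $\tau_0$) together with $\partial_\lam h_p=-\|u\|_2^2<0$ gives $\tau_u'(\lam)=\|u\|_2^2/h_p'(\tau_u;\lam)<0$ by the implicit function theorem.

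The proposition carries essentially no obstacle beyond bookkeeping: parts (1)--(2) are a direct translation of Lemma 4.1, and part (3) reduces to the monotonicity of $g$ on its decreasing branch. The single point that must be handled with care is that the zero $\tau_u(\lam)$ always sits on that branch, i.e. $h_p'(\tau_u;\lam)\neq 0$; this is precisely the inequality $\tau_u>\tau_0$ proved at the end of Lemma 4.1(i), whose validity already invokes $b_0\geq 0$, that is, $\lam\leq b\lam_1$.
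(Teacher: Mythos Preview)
Your proposal is correct and follows essentially the same approach as the paper: parts (1) and (2) are deduced directly from Lemma~4.1 via the identities $J(\tau u)=K_u(\tau)$ and $I(\tau u)=\tau K_u'(\tau)$, and part (3) hinges on the fact $\tau_u>\tau_0$ established in Lemma~4.1(i). The only cosmetic difference is in part (3): the paper inverts to write $\lambda=\lambda(\tau_u)$ from \eqref{4.2} and computes $\dif\lambda/\dif\tau_u<0$, whereas you work directly with $\tau_u(\lambda)$ via the decreasing branch of $g$ (or the implicit function theorem); both arguments rest on the same inequality $h_p'(\tau_u)<0$.
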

  \begin{proof}
 The claim (1) can be obtained by Lemma 4.1 (i), (iii) directly.\par
   Observing that $J(\tau u)= K_u(\tau)$ and $I(\tau u) = \tau K_u'(\tau)$ for $\tau>0$, then we have (2) by virtue of Lemma 4.1 (ii).\par
   To prove (3), from \eqref{4.2} we get a function $\lam=\lam(\tau_u)$ which is defined by
   \begin{eqnarray}\label{4.3}
     \lam = \frac{- \tau_u^{p-1} \|u\|_{p+1}^{p+1} + a\tau_u^2 \|\nabla u\|_2^4 + b \|\nabla u\|_2^2}{\|u\|_2^2},
   \end{eqnarray}
    for $u\in H^1_0(\Omega)\setminus \{0\}$ is fixed.
  Noting that $\tau_u >\tau_0$, where $\tau_0$ is determined by \eqref{4.1}, then we have
   \begin{eqnarray*}
     \frac{\dif \lam}{\dif \tau_u} &=&  \frac{-(p-1)\tau_u^{p-2} \|u\|_{p+1}^{p+1} + 2a\tau_u \|\nabla u\|_2^4}{ \|u\|_2^2}\\
     &=& -\frac{\tau_u}{\|u\|_2^2}\left[(p-1)\tau_u^{p-3} \|u\|_{p+1}^{p+1} - 2a \|\nabla u\|_2^4\right] <0.
   \end{eqnarray*}
  Hence $\lam$ is decreasing with respect to $\tau_u$. Observing that $\tau_u$ is the inverse function of \eqref{4.3}, then we obtain (3).
  \end{proof}
\begin{rmk}
  {\rm Proposition 4.2 indicates several important facts of $\textit{\textbf{N}}_p$, $\textit{\textbf{N}}_p^{\,+}$ and $\textit{\textbf{N}}_p^{\,-}$ for $3<p<5$. \par
  (a) $\textit{\textbf{N}}_p$, $\textit{\textbf{N}}_p^{\,+}$, $\textit{\textbf{N}}_p^{\,-} \neq \emptyset$ for $3<p<5$ and $\lambda \leq b\lambda_1$; \par
  (b) For every $u\in H^1_0(\Omega)\backslash \{0\}$ and $\tau >0$, the ray $\tau \mapsto \tau u$ intersects $\textit{\textbf{N}}_p$ at the unique point $\tau_u u$. From the viewpoint of geometry, each ray starting from the origin of $H^1_0(\Omega)$ intersects exactly once the manifold $\textit{\textbf{N}}_p$, and $H^1_0(\Omega)$ is separated by $\textit{\textbf{N}}_p$ into two parts: the inside part is
  $$\textit{\textbf{N}}_p^{\,+} = \left\{ u \in H^1_0(\Omega): I(u)>0\right\} \cup \{0\}
       = \left\{\tau u: u\in H^1_0(\Omega),\ 0\leq \tau<\tau_u\right\},
   $$
   and the outside part is
   $$\textit{\textbf{N}}_p^{\,-} = \left\{ u \in H^1_0(\Omega): I(u)<0\right\}
       = \left\{\tau u: u\in H^1_0(\Omega),\  \tau > \tau_u\right\}.\qquad\qquad\quad
   $$

   (c)  For $\lam \leq b\lam_1$, we know from Proposition 4.2 (1) and (3) that the size of $\textit{\textbf{N}}_p^{\,+}$ gets smaller and $\textit{\textbf{N}}_p^{\,-}$ gets bigger as $\lam$ increases.
  }\hfill $\Box$
\end{rmk}

\begin{lem}
   Let $3<p<5$, $\lambda \leq b\lambda_1$, and $\rho_p = \left(S_{p+1}^{(p+1)/2}a\right)^{\frac{1}{p-3}}$, where $S_{p+1}$ is the Sobolev constant given by $\eqref{1.8}$ with exponent $q=p+1$.\par
  {\rm(1)} If $0< \|\nabla u\|_2 <\rho_p$, then $I(u)>0${\rm ;}\par
   {\rm(2)} If $I(u)<0$, then $\|\nabla u\|_2 > \rho_p${\rm ;}\par
   {\rm(3)} If $I(u)=0$, then either $\|\nabla u\|_2 =0$ or $\|\nabla u\|_2 \geq \rho_p$.
\end{lem}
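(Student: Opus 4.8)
The plan is to follow the same scheme as in the proof of Lemma 3.4, namely to produce one lower bound for the Nehari functional $I(u)$ and then read off all three assertions from it. First I would use the hypothesis $\lambda\le b\lambda_1$ together with \eqref{1.6} to discard the linear part: for $u\in H^1_0(\Omega)\setminus\{0\}$ one has $b\|\nabla u\|_2^2-\lambda\|u\|_2^2\ge b_0\|\nabla u\|_2^2\ge 0$, so \eqref{1.12} gives $I(u)\ge a\|\nabla u\|_2^4-\|u\|_{p+1}^{p+1}$. Next I would invoke the Sobolev inequality coming from \eqref{1.8} with $q=p+1$ (admissible since $3<p<5$ forces $4<p+1<6$), that is $\|u\|_{p+1}^2\le S_{p+1}^{-1}\|\nabla u\|_2^2$, hence $\|u\|_{p+1}^{p+1}\le S_{p+1}^{-(p+1)/2}\|\nabla u\|_2^{p+1}$. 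Combining the two estimates and factoring out $\|\nabla u\|_2^4$ yields
$$I(u)\ \ge\ \|\nabla u\|_2^4\Big(a-S_{p+1}^{-(p+1)/2}\|\nabla u\|_2^{p-3}\Big),$$
which plays here the role that \eqref{3.9} plays in the case $p=3$.

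With this bound in hand, the three claims are immediate from the elementary observation that the map $t\mapsto a-S_{p+1}^{-(p+1)/2}t^{\,p-3}$ is strictly decreasing on $[0,+\infty)$ and vanishes exactly at $t=\rho_p=\big(aS_{p+1}^{(p+1)/2}\big)^{1/(p-3)}$. For (1), if $0<\|\nabla u\|_2<\rho_p$ the bracket is positive and $\|\nabla u\|_2^4>0$, so $I(u)>0$. For (2), if $I(u)<0$ then necessarily $\|\nabla u\|_2\ne 0$ and the bracket must be negative, i.e.\ $\|\nabla u\|_2^{p-3}>aS_{p+1}^{(p+1)/2}$, hence $\|\nabla u\|_2>\rho_p$; this is just the contrapositive of (1), sharpened to a strict inequality by the strict monotonicity. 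For (3), if $I(u)=0$ with $\|\nabla u\|_2\ne0$, the displayed inequality forces the bracket to be $\le0$, whence $\|\nabla u\|_2\ge\rho_p$.

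I do not expect any genuine obstacle in this lemma; the only points that need a little care are keeping track of the exponent $(p+1)/2$ on the Sobolev constant and checking that $p-3>0$, so that $\rho_p$ is well defined and the auxiliary one-variable function is strictly monotone — both guaranteed by the assumption $3<p<5$. The structure is entirely parallel to Lemma 3.4, with $S_{p+1}^{(p+1)/2}$ and the exponent $p-3$ taking over the roles of $\Lambda$ and the exponent $2$ there; note in particular that, unlike the case $p=3$, the constant $\rho_p$ here involves neither $b_0$ nor $a\Lambda$, because for $p>3$ the term $\|u\|_{p+1}^{p+1}$ is of strictly higher order than $a\|\nabla u\|_2^4$ near the origin and the quartic term alone controls the sign of $I(u)$ for small $u$.
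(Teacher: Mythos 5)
Your proof is correct and follows essentially the same route as the paper: drop the linear term via $\lambda\le b\lambda_1$ and \eqref{1.6}, bound $\|u\|_{p+1}^{p+1}$ by $S_{p+1}^{-(p+1)/2}\|\nabla u\|_2^{p+1}$ from \eqref{1.8}, and compare with $a\|\nabla u\|_2^4$; the paper merely runs the same two inequalities separately for each of (1)--(3) instead of packaging them into a single displayed lower bound for $I(u)$.
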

\begin{proof}
 (1) If $0< \|\nabla u\|_2 <\rho_p$, then by \eqref{1.8},
   $$\|u\|_{p+1}^{p+1} \leq \frac{1}{S_{p+1}^{(p+1)/2}} \|\nabla u\|_2^{p+1} < \frac{\rho_p^{\,p-3}}{S_{p+1}^{(p+1)/2}}  \|\nabla u\|_2^4 = a  \|\nabla u\|_2^4.
   $$
   Combining the above inequality with \eqref{1.7} and \eqref{1.12}, we have $I(u)>0$.\par
   (2) By contradiction suppose that $\|\nabla u\|_2 \leq \rho_p$, then we have $I(u)\geq 0$ with a similar argument in (1), which contradicts with the assumption $I(u)<0$. \par
   (3) If $I(u)=0$ and $\|\nabla u\|_2 \neq 0$, it follows from \eqref{1.6}, \eqref{1.8} and \eqref{1.12} that
   \begin{eqnarray*}
   a\|\nabla u\|_4^4 &=& - b \|\nabla u\|_2^2 + \lambda \|u\|_2^2 + \|u\|_{p+1}^{p+1}\\
    &\leq& \|u\|_{p+1}^{p+1}  \ \leq\ \frac{1}{S_{p+1}^{(p+1)/2}} \|\nabla u\|_2^{p+1}.
   \end{eqnarray*}
   Then we deduce that  $\|\nabla u\|_2 \geq \left(S_{p+1}^{(p+1)/2}a\right)^{\frac{1}{p-3}}= \rho_p$ by virtue of $p>3$.
\end{proof}

\subsection{The structure of $\textit{\textbf{W}}_p^{\,+}$ and $\textit{\textbf{W}}_p^{\,-}$ for $3<p<5$, $\lam \leq b\lam_1$}
  First, we show the positivity and minimax characterization of $d_p$ for $3<p<5$ defined by \eqref{1.16}.
 \begin{prop}
    If $3<p<5$ and $ \lambda \leq b\lambda_1$, then $0< d_p <+\infty${\rm ,} and
   \begin{eqnarray}\label{4.4}
   d_p = \inf\limits_{u \in \textit{\textbf{N}}_p} J(u) = \inf\limits_{u \in H^1_0(\Omega)\backslash \{0\}}\sup\limits_{\tau>0} J(\tau u).
   \end{eqnarray}
  \end{prop}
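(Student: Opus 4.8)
The plan is to establish the three assertions—$d_p<+\infty$, $d_p>0$, and the minimax identity \eqref{4.4}—in turn, using only Proposition 4.2, Lemma 4.4 and Remark 4.3. Finiteness is immediate: by Remark 4.3(a) the manifold $\textit{\textbf{N}}_p$ is nonempty for $3<p<5$ and $\lambda\leq b\lambda_1$, so fixing any $u\in\textit{\textbf{N}}_p$ gives $d_p\leq J(u)<+\infty$ straight from the definition \eqref{1.16}. The first equality in \eqref{4.4} is likewise just the definition of $d_p$.

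For positivity I would argue as follows. If $u\in\textit{\textbf{N}}_p$ then $I(u)=0$ and $\|\nabla u\|_2\neq 0$, so Lemma 4.4(3) yields the uniform lower bound $\|\nabla u\|_2\geq\rho_p=\big(S_{p+1}^{(p+1)/2}a\big)^{1/(p-3)}$. Substituting $I(u)=0$ into the identity \eqref{1.13} and noting that, for $3<p<5$, one has $\tfrac14-\tfrac1{p+1}>0$ and $\tfrac12-\tfrac1{p+1}>0$, while $b\|\nabla u\|_2^2-\lambda\|u\|_2^2\geq b_0\|\nabla u\|_2^2\geq 0$ by \eqref{1.6} and $\lambda\leq b\lambda_1$, every surviving term on the right of \eqref{1.13} is nonnegative; discarding the $\|\nabla u\|_2^2$ and $\|u\|_2^2$ terms gives
$$
J(u)\ \geq\ a\left(\frac14-\frac1{p+1}\right)\|\nabla u\|_2^4\ \geq\ a\,\frac{p-3}{4(p+1)}\,\rho_p^{\,4}\ >\ 0 .
$$
Taking the infimum over $u\in\textit{\textbf{N}}_p$ gives $d_p\geq a\,\frac{p-3}{4(p+1)}\,\rho_p^{\,4}>0$, which in particular shows $J$ is bounded below on $\textit{\textbf{N}}_p$ so that $d_p$ is well defined.

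For the minimax identity, Proposition 4.2(1) supplies, for each $u\in H^1_0(\Omega)\setminus\{0\}$, a unique $\tau_u>0$ with $\tau_u u\in\textit{\textbf{N}}_p$ and $J(\tau_u u)=\sup_{\tau>0}J(\tau u)$; hence $\inf_{u\neq 0}\sup_{\tau>0}J(\tau u)=\inf_{u\neq 0}J(\tau_u u)\geq\inf_{v\in\textit{\textbf{N}}_p}J(v)=d_p$. Conversely, any $v\in\textit{\textbf{N}}_p$ satisfies $K_v'(1)=0$, so by uniqueness of the stationary point $\tau_v=1$, whence $J(v)=\sup_{\tau>0}J(\tau v)\geq\inf_{u\neq 0}\sup_{\tau>0}J(\tau u)$; taking the infimum over $\textit{\textbf{N}}_p$ gives the reverse inequality, proving \eqref{4.4}. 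The whole argument is routine; the only point requiring care is checking that the coefficients in \eqref{1.13} have the correct signs in the regime $3<p<5$ (so that dropping the lower-order terms only decreases $J$), combined with the uniform bound $\|\nabla u\|_2\geq\rho_p$ on $\textit{\textbf{N}}_p$ from Lemma 4.4(3)—this is precisely what upgrades pointwise positivity of $J$ on $\textit{\textbf{N}}_p$ to the strict positivity $d_p>0$.
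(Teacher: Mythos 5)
Your proof is correct, and the two substantive parts --- positivity via \eqref{1.13} combined with the lower bound $\|\nabla u\|_2\geq\rho_p$ from Lemma 4.4(3), and the minimax identity via the unique maximizer $\tau_u$ from Proposition 4.2 --- coincide with the paper's argument (the paper delegates the minimax step to a ``standard argument as in Willem,'' which is exactly the two-inequality argument you wrote out). The one place you genuinely diverge is the proof of $d_p<+\infty$: you simply observe that $\textit{\textbf{N}}_p\neq\emptyset$ and that $J$ is finite at any fixed element, so $d_p\leq J(u)<+\infty$; the paper instead constructs the explicit element $\hat{\tau}\phi_\Lambda\in\textit{\textbf{N}}_p$, bounds $\hat{\tau}$ via H\"older's inequality, and derives a quantitative upper bound for $J(\hat{\tau}\phi_\Lambda)$ in terms of $a$, $b$, $\Lambda$ and $|\Omega|$. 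Your shortcut is perfectly valid for the statement as written (finiteness is all that is claimed); the paper's longer computation only buys an explicit numerical bound on $d_p$ that is not used in the proposition itself. Both arguments rest on the same prerequisite, namely the non-emptiness of $\textit{\textbf{N}}_p$ guaranteed by Proposition 4.2.
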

 \begin{proof}
   At first, we prove $d_p >0$. If $u \in \textit{\textbf{N}}_p$ for $3<p<5$, we deduce from \eqref{1.6} and \eqref{1.13} that
  \begin{eqnarray}\label{4.5}
     J(u) &=& a\left(\frac{1}{4}-\frac{1}{p+1}\right)\|\nabla u\|^4_2 + b\left(\frac{1}{2}-\frac{1}{p+1}\right)\|\nabla u\|^2_2
     - \lambda \left(\frac{1}{2}-\frac{1}{p+1}\right)\|u\|^2_2 \nonumber\\
     &\geq&  a\left(\frac{1}{4}-\frac{1}{p+1}\right)\|\nabla u\|^4_2 + \left(\frac{1}{2}-\frac{1}{p+1}\right)b_0 \|\nabla u\|_2^2.
  \end{eqnarray}
 Since $3<p<5$, $\lambda \leq b\lambda_1$ and by Lemma 4.4 (3) along with \eqref{4.5}, we infer that $J(u)$ is bounded from below on $\textit{\textbf{N}}_p$ and
 \begin{eqnarray*}
    J(u)\geq a\left(\frac{1}{4}-\frac{1}{p+1}\right) \rho_p^4,\ \ \ {\rm for}\ u\in \textit{\textbf{N}}_p,
 \end{eqnarray*}
where $\rho_p = \left(S_{p+1}^{(p+1)/2}a\right)^{\frac{1}{p-3}}$ is chosen in Lemma 4.4. Therefore,
 \begin{eqnarray}\label{4.6}
 d_p \geq \left(\frac{1}{4}-\frac{1}{p+1}\right) a \rho_p^4 >0.
 \end{eqnarray}

   Next, we show that $d_p < +\infty$. Put $\phi_\Lambda \in H^1_0(\Omega)$ satisfying \eqref{1.9}, and let $\hat{\tau}=\tau_{\phi_\Lambda}>0$ be the number determined by Lemma 4.1 (i) with respect to $u=\phi_\Lambda$, such that $\hat{\tau} \phi_\Lambda \in \textit{\textbf{N}}_p$  and
  \begin{eqnarray}\label{4.7}
    \hat{\tau}^{p-1} \|\phi_\Lambda\|_{p+1}^{p+1} &=& a\hat{\tau}^2 \|\nabla \phi_\Lambda\|_2^4 + b \|\nabla \phi_\Lambda\|_2^2 - \lambda \|\phi_\Lambda\|_2^2 \nonumber\\
     &=& a \Lambda \hat{\tau}^2 + b\sqrt{\Lambda} - \lambda \|\phi_\Lambda\|_2^2.
   \end{eqnarray}
From $p>3$ and H\"{o}lder inequality, we have $\|\phi_\Lambda\|_4^4 \leq |\Omega|^{(p-3)/(p+1)} \|\phi_\Lambda\|_{p+1}^{4}$, and $\|\phi_\Lambda\|_2^2 \leq |\Omega|^{1/2} \|\phi_\Lambda\|_{4}^{2}$. Noting $\|\phi_\Lambda\|_4^4=1$, then we deduce from \eqref{4.7} that
\begin{eqnarray*}
  |\Omega|^{-\frac{p-3}{4}}  \hat{\tau}^{p-1} \leq  a \Lambda \hat{\tau}^2 + b\sqrt{\Lambda}+c_0 ,
\end{eqnarray*}
where
 \begin{eqnarray}\label{4.8}
 c_0 =\left\{\begin{array}{ll}
      0,\ \ &{\rm for}\ 0\leq \lambda \leq b\lam_1,
    \\[1.2ex]
   -\lam |\Omega|^{1/2} , \ \ &{\rm for}\ \lambda < 0.
      \end{array}\right.
 \end{eqnarray}
Thus, by virtue of $3<p<5$, there exists $\gamma_0>0$ such that $\hat{\tau}\leq \gamma_0$. Now observing that $\hat{\tau} \phi_\Lambda \in \textit{\textbf{N}}_p$  and
\begin{eqnarray*}
  &&J(\hat{\tau} \phi_\Lambda)\\
   &=& a\hat{\tau}^4 \left(\frac{1}{4}-\frac{1}{p+1}\right)\|\nabla  \phi_\Lambda\|^4_2 + b\hat{\tau}^2 \left(\frac{1}{2}-\frac{1}{p+1}\right)\|\nabla \phi_\Lambda\|^2_2
     - \lambda \hat{\tau}^2 \left(\frac{1}{2}-\frac{1}{p+1}\right)\|\phi_\Lambda\|^2_2\\
   &\leq& a \gamma_0^4  \Lambda \left(\frac{1}{4}-\frac{1}{p+1}\right) +  b \gamma_0^2 \sqrt{\Lambda} \left(\frac{1}{2}-\frac{1}{p+1}\right) + c_0 \gamma_0^2 \left(\frac{1}{2}-\frac{1}{p+1}\right),
\end{eqnarray*}
where $c_0$ is defined by \eqref{4.8}. Therefore, we arrive at $d_p = \inf\limits_{u \in \textit{\textbf{N}}_p} J(u) <+\infty$.\par
  By Proposition 4.2 and a standard argument as in Willem \cite{Wi} (Chapter 4), we obtain \eqref{4.4}.
  \end{proof}

 In order to prove the non-emptiness of the potential well $\textit{\textbf{W}}_p^{\,+} = J^{d_p} \cap \textit{\textbf{N}}_p^{\,+}$ and the unstable set $\textit{\textbf{W}}_p^{\,-} = J^{d_p} \cap \textit{\textbf{N}}_p^{\,-}$ for $3<p<5$, we choose
 \begin{eqnarray}\label{4.9}
 R_p =\left[\frac{4(p+1)d_p}{(p-3)a}\right]^{1/4} \ \ {\rm and}\ \hat{r}_p = \min\left\{\rho_p,\ \left(-\frac{c_1}{a} + \frac{1}{a}\sqrt{c_1^2 + 4ad_p}\right)^{1/2}\right\},
\end{eqnarray}
 where $\rho_p>0$ is given in Lemma 4.4, and $c_1$ appears in \eqref{1.7}. By \eqref{4.6} we have $R_p\geq \rho_p$ for $3<p<5$ and $\lam \leq b\lam_1$.\par
  We have:\par
 \begin{thm}
  If  $3<p<5$, $\lambda \leq b\lambda_1$, then $\textit{\textbf{W}}_p^{\,+}\neq \emptyset$, $\textit{\textbf{W}}_p^{\,-}\neq \emptyset$ and
\begin{eqnarray}
 && B_{\hat{r}_p} \subseteq \textit{\textbf{W}}_p^{\,+} \subseteq B_{R_p}\cap J^{d_p},\label{4.10}\\
 && \bar{B}_{R_p}^{\,c}\cap J^{d_p} \subseteq \textit{\textbf{W}}_p^{\,-} \subseteq \bar{B}_{\rho_p}^{\,c}\cap J^{d_p},\label{4.11}
\end{eqnarray}
where $R_p$, $\hat{r}_p$ are defined by $\eqref{4.9}$, and $\rho_p$ is given in Lemma $4.4$.
\end{thm}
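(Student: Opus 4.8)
The plan is to transcribe, almost verbatim, the proof of Theorem 3.7, replacing Lemma 3.4, Proposition 3.2 and Proposition 3.5 by their $3<p<5$ analogues Lemma 4.4, Proposition 4.2 and Proposition 4.5. Throughout I would use the extremal function $\phi_\Lambda$ from \eqref{1.9}, so that $\|\nabla\phi_\Lambda\|_2=\Lambda^{1/4}$, together with $d_p>0$ from Proposition 4.5 and the ordering $\hat r_p\le\rho_p\le R_p$ coming from \eqref{4.6} and \eqref{4.9}.

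First I would prove $B_{\hat r_p}\subseteq\textit{\textbf{W}}_p^{\,+}$, which already gives $\textit{\textbf{W}}_p^{\,+}\neq\emptyset$. For $u\in B_{\hat r_p}$ with $\|\nabla u\|_2\neq0$ one has $0<\|\nabla u\|_2<\hat r_p\le\rho_p$, so Lemma 4.4(1) yields $I(u)>0$, i.e. $u\in\textit{\textbf{N}}_p^{\,+}$ (while $0\in\textit{\textbf{N}}_p^{\,+}$ by convention and $J(0)=0<d_p$); discarding the nonpositive term $-\tfrac1{p+1}\|u\|_{p+1}^{p+1}$ in \eqref{zh809-4} and using the right inequality in \eqref{1.6} gives $J(u)\le\tfrac a4\|\nabla u\|_2^4+\tfrac{c_1}2\|\nabla u\|_2^2$, and since $\hat r_p$ is chosen in \eqref{4.9} so that $\tfrac a4 r^4+\tfrac{c_1}2 r^2=d_p$ at $r$ equal to the second entry of the minimum, together with $d_p>0$, this is $<d_p$, hence $u\in\textit{\textbf{W}}_p^{\,+}$. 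For the reverse inclusion, $\textit{\textbf{W}}_p^{\,+}\subseteq J^{d_p}$ by definition; and if $u\in\textit{\textbf{W}}_p^{\,+}$ with $\|\nabla u\|_2\neq0$, then feeding \eqref{1.6} into \eqref{1.13} and dropping the nonnegative terms $(\tfrac12-\tfrac1{p+1})b_0\|\nabla u\|_2^2$ and $\tfrac1{p+1}I(u)$ gives $J(u)\ge a(\tfrac14-\tfrac1{p+1})\|\nabla u\|_2^4$, and since $a(\tfrac14-\tfrac1{p+1})R_p^4=d_p$ by \eqref{4.9}, the bound $J(u)<d_p$ forces $\|\nabla u\|_2<R_p$. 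This establishes \eqref{4.10}.

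Next I would handle \eqref{4.11} and $\textit{\textbf{W}}_p^{\,-}\neq\emptyset$. The preliminary step is $\bar B_{R_p}^{\,c}\cap J^{d_p}\neq\emptyset$: by Proposition 4.2(2) the fiber map $\tau\mapsto J(\tau\phi_\Lambda)=K_{\phi_\Lambda}(\tau)$ increases and then decreases, and since $p+1>4$ the term $-\tfrac{\tau^{p+1}}{p+1}\|\phi_\Lambda\|_{p+1}^{p+1}$ dominates so $K_{\phi_\Lambda}(\tau)\to-\infty$; hence $\tau\phi_\Lambda\in J^{d_p}$ for all $\tau$ beyond some $\mu>0$, and any $l>\max\{R_p\Lambda^{-1/4},\mu\}$ gives $l\phi_\Lambda\in\bar B_{R_p}^{\,c}\cap J^{d_p}$. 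Then for any $u\in\bar B_{R_p}^{\,c}\cap J^{d_p}$ the same estimate $J(u)\ge a(\tfrac14-\tfrac1{p+1})\|\nabla u\|_2^4+\tfrac1{p+1}I(u)>d_p+\tfrac1{p+1}I(u)$ (using $\|\nabla u\|_2>R_p$) together with $J(u)<d_p$ forces $I(u)<0$, so $u\in\textit{\textbf{N}}_p^{\,-}\cap J^{d_p}=\textit{\textbf{W}}_p^{\,-}$; this yields simultaneously $\textit{\textbf{W}}_p^{\,-}\neq\emptyset$ and $\bar B_{R_p}^{\,c}\cap J^{d_p}\subseteq\textit{\textbf{W}}_p^{\,-}$. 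Finally $\textit{\textbf{W}}_p^{\,-}\subseteq\textit{\textbf{N}}_p^{\,-}$ and Lemma 4.4(2) give $\textit{\textbf{W}}_p^{\,-}\subseteq\bar B_{\rho_p}^{\,c}\cap J^{d_p}$, completing \eqref{4.11}.

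Given Sections 3 and 4, the argument is essentially routine; the one place that genuinely relies on the superquartic regime $p>3$ is the non-emptiness of $\bar B_{R_p}^{\,c}\cap J^{d_p}$, where one needs the fiber map along $\phi_\Lambda$ to drop below $d_p$ while the Dirichlet norm already exceeds $R_p$ — here the $\tau^{p+1}$ growth does the job automatically, whereas in the borderline case $p=3$ of Theorem 3.7 the same conclusion required the auxiliary coercivity $a\Lambda<1$.
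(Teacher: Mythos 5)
Your proof is correct and follows essentially the same route as the paper's proof of Theorem 4.6: the inclusion $B_{\hat r_p}\subseteq \textit{\textbf{W}}_p^{\,+}$ via Lemma 4.4(1) and the upper bound $J(u)\le \tfrac a4\|\nabla u\|_2^4+\tfrac{c_1}2\|\nabla u\|_2^2$, the lower bound $J(u)\ge a(\tfrac14-\tfrac1{p+1})\|\nabla u\|_2^4+\tfrac1{p+1}I(u)$ for both halves of \eqref{4.10}--\eqref{4.11}, and a fiber-map argument for the non-emptiness of $\bar B_{R_p}^{\,c}\cap J^{d_p}$. The only cosmetic difference is that you use $\phi_\Lambda$ as the test direction there while the paper uses an arbitrary $\psi$ with $\|\nabla\psi\|_2=1$; both work since $p+1>4$ makes $K_u(\tau)\to-\infty$ along every ray.
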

\begin{proof}
Applying Lemma 4.4 (1), we know that $B_{\hat{r}_p} \subseteq \textit{\textbf{N}}_p^{\,+}$. Further, by \eqref{1.6}, \eqref{zh809-4} and \eqref{1.13}, we have
 \begin{eqnarray*}
     J(u) \leq \frac{a}{4}\|\nabla u\|_2^4 + \frac{c_1}{2}\|\nabla u\|_2^2 < d_p,\ \ {\rm for}\ u\in B_{\hat{r}_p},
 \end{eqnarray*}
and
 \begin{eqnarray}
    d_p > J(u) &\geq&  a\left(\frac{1}{4}-\frac{1}{p+1}\right)\|\nabla u\|^4_2 + b_0\left(\frac{1}{2}-\frac{1}{p+1}\right) \|\nabla u\|_2^2 + \frac{1}{p+1} I(u)\\
         &\geq& a\left(\frac{1}{4}-\frac{1}{p+1}\right)\|\nabla u\|^4_2,\ \ \ \ \qquad {\rm for}\ u\in \textit{\textbf{W}}_p^{\,+}.  \nonumber
  \end{eqnarray}
Hence, we obtain $\textit{\textbf{W}}_p^{\,+}\neq \emptyset$ and \eqref{4.10}. \par
 To prove \eqref{4.11}, we first check that $\bar{B}_{R_p}^{\,c}\cap J^{d_p} \neq \emptyset$. Choosing $\psi \in H^1_0(\Omega)$ and $\|\nabla \psi\|_2 =1$, it follows from Proposition 4.2 that, there exists a unique number $\tau_\psi >0$, such that $J(\tau \psi)$ is increasing for $\tau \in (0, \tau_\psi)$, and is decreasing for $\tau \in (\tau_\psi, +\infty)$.  Noting
\begin{eqnarray*}
J(\tau \psi) = \frac{a \tau^4}{4} + \frac{b \tau^2}{2} - \frac{\lambda \tau^2}{2}  \|\psi\|_2^2 - \frac{\tau^{p+1}}{p+1} \|\psi\|_{p+1}^{p+1},
\end{eqnarray*}
 then $\lim\limits_{\tau\rightarrow 0} J(\tau \psi) = 0$, and $\lim\limits_{\tau\rightarrow +\infty} J(\tau \psi) = -\infty$ by virtue of $p>3$. Thus, we deduce from the above facts that there exist $\eta_1 >0$, such that $J(\tau \psi)<d_p$ for all $\tau >\eta_1$. Selecting $\tau_1 >\max \left\{R_p, \eta_1\right\}$, then we have $\tau_1 \psi \in J^{d_p}\cap \bar{B}_{R_p}^{\,c}$ and $\bar{B}_{R_p}^{\,c}\cap J^{d_p} \neq \emptyset$.\par
  Let $u\in \bar{B}_{R_p}^{\,c}\cap J^{d_p}$, then we deduce from \eqref{1.6} and (4.12) that
   $$ d_p > J(u) \geq a\left(\frac{1}{4}-\frac{1}{p+1}\right)R_p^4 + \frac{1}{p+1} I(u).
   $$
 Due to \eqref{4.9} and Lemma 4.4 (2), we obtain $\textit{\textbf{W}}_p^{\,-}\neq \emptyset$ and \eqref{4.11}.
\end{proof}

\section{Proofs of main results}
\setcounter{equation}{0}
\subsection{Invariant sets under the flow of \eqref{zh809-1}}
\label{}\ \ \ \
  To begin with, we prepare the following energy identity which will be used frequently in proving our results.
  \begin{prop}
    Let $u\in H(T)$ be a solution for $\eqref{zh809-1}$, where $H(T)$ is defined by \eqref{2.1}, then
    \begin{eqnarray}\label{5.1}
 E(t) :=  \frac{1}{2}\|u_t (t)\|_2^2 + J(u(t))= E(0),\quad for\ any\ \ t\in (0,T).
 \end{eqnarray}
  \end{prop}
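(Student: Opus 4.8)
The plan is to prove the energy identity \eqref{5.1} by differentiating $E(t)$ in time and showing that $\frac{d}{dt}E(t)=0$ on $(0,T)$. The regularity afforded by $u\in H(T)$ — namely $u\in C^2([0,T);L^2(\Omega))$, $u\in C^1([0,T);H^1_0(\Omega))$ and $u\in C([0,T);H^2(\Omega)\cap H^1_0(\Omega))$ — is exactly what makes each term below well-defined and differentiable. First I would write out
\begin{eqnarray*}
\frac{d}{dt}E(t) &=& \int_\Omega u_t\,u_{tt}\,\dif x
+ a\|\nabla u\|_2^2\int_\Omega \nabla u\cdot\nabla u_t\,\dif x
+ b\int_\Omega \nabla u\cdot\nabla u_t\,\dif x\\
&& -\,\lambda\int_\Omega u\,u_t\,\dif x
-\int_\Omega |u|^{p-1}u\,u_t\,\dif x,
\end{eqnarray*}
using that $\frac{d}{dt}\big(\tfrac{a}{4}\|\nabla u\|_2^4\big)=a\|\nabla u\|_2^2\langle\nabla u,\nabla u_t\rangle$ and $\frac{d}{dt}\big(\tfrac{1}{p+1}\|u\|_{p+1}^{p+1}\big)=\int_\Omega |u|^{p-1}u\,u_t\,\dif x$; the latter is justified because $u(t)\in H^2(\Omega)\hookrightarrow L^\infty(\Omega)$ in dimension three (or at least into every $L^q$), so $|u|^{p-1}u\in L^2(\Omega)$ and the map $t\mapsto\|u(t)\|_{p+1}^{p+1}$ is $C^1$ with the stated derivative.

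The key step is then to integrate by parts in the second and third terms: since $u(t)\in H^2(\Omega)\cap H^1_0(\Omega)$ and $u_t(t)\in H^1_0(\Omega)$, the boundary terms vanish and
\[
\big(a\|\nabla u\|_2^2+b\big)\int_\Omega \nabla u\cdot\nabla u_t\,\dif x
= -\int_\Omega \big(a\|\nabla u\|_2^2+b\big)\Delta u\;u_t\,\dif x .
\]
Substituting this back and grouping, $\frac{d}{dt}E(t)$ becomes
\[
\int_\Omega u_t\Big[\,u_{tt}-\big(a\|\nabla u\|_2^2+b\big)\Delta u-\lambda u-|u|^{p-1}u\,\Big]\dif x = 0,
\]
where the bracket vanishes pointwise (a.e. in $x$, for each $t$) by the equation \eqref{zh809-1}, interpreted in $L^2(\Omega)$ as is legitimate for solutions in $H(T)$. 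Hence $E(t)$ is constant on $(0,T)$, and continuity up to $t=0$ (from $u\in C^1([0,T);H^1_0(\Omega))\cap C^2([0,T);L^2(\Omega))$) gives $E(t)=E(0)$, which is \eqref{5.1}.

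The main obstacle is purely a matter of justifying the term-by-term differentiation under the integral sign and the integration by parts — that is, checking that all the inner products above depend differentiably on $t$ and that the chain rule applies. This is where the precise definition of $H(T)$ is used: the pairing $\langle u_{tt}(t),u_t(t)\rangle$ makes sense as a duality between $L^2$ and $L^2$; the pairing $\langle (a\|\nabla u\|_2^2+b)\Delta u(t),u_t(t)\rangle$ makes sense because $\Delta u(t)\in L^2(\Omega)$ and $u_t(t)\in L^2(\Omega)$; and $\|\nabla u(t)\|_2^2$ is $C^1$ in $t$ because $u\in C^1([0,T);H^1_0(\Omega))$. One standard way to make this rigorous without any fuss is to test the equation \eqref{zh809-1} with $u_t(t)$ directly in the $L^2$ sense and recognize each resulting term as the time derivative of the corresponding term of $E(t)$; integrating from $0$ to $t$ then yields \eqref{5.1} at once. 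No delicate estimate is needed beyond the Sobolev embedding $H^2(\Omega)\hookrightarrow L^q(\Omega)$ used to control the nonlinearity.
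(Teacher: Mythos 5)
Your proposal is correct and follows exactly the paper's argument: differentiate $E(t)$, integrate by parts using $u(t)\in H^2(\Omega)\cap H^1_0(\Omega)$ and $u_t(t)\in H^1_0(\Omega)$, and recognize the resulting integrand as $u_t$ times the equation, which vanishes. The paper compresses all of this into the single line ``a direct calculation shows that $\frac{\dif}{\dif t}E(t)=0$''; your version merely supplies the regularity justifications that the paper leaves implicit.
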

\begin{proof}
  Using \eqref{zh809-1}, a direct calculation shows that
  \begin{eqnarray*}
  \frac{\dif}{\dif t}E(t) = \int_{\Omega} \left[ u_{tt}-\left(a \int_\Omega |\nabla u|^2 \dif x +b\right)\Delta u - \lambda u - |u|^{p-1}u \right] u_t \dif x =0.
  \end{eqnarray*}
  Then $E(t) = E(0)$ holds for any $t\in (0,T)$.
\end{proof}

 Now we are in a position to study the invariance of the stable set $\textit{\textbf{W}}_p^{\,+}$ and unstable set $\textit{\textbf{W}}_p^{\,-}$ under the flow of \eqref{zh809-1} for $3\leq p<5$.\\[0.4em]
{\bf{Proof of Theorem 2.1.}} \emph{Case $A$}: $p=3$ and $\lam<b\lam_1$.\par
 $(A1)$ Let $u \in H(T_{{\rm{max}}})$ be the unique solution of $\eqref{zh809-1}$ with $u_0 \in \textit{\textbf{W}}_3^{\,+}$ and $E(0)< d_3$. Suppose by contradiction that there exists $t_0 \in (0, T_{{\rm{max}}})$ such that $u(t) \in \partial\textit{\textbf{W}}_3^{\,+}$, then we have either (i) $J(u(t_0)) = d_3$, or (ii) $I(u(t_0)) = 0$ and $\|\nabla u(t_0)\|_2 \neq 0$.\par
  From the assumption that $E(0)<d_3$ and the energy identity $E(t)=E(0)$ for $t>0$, we have
  \begin{eqnarray}\label{5.2}
    \frac{1}{2}\|u(t_0)\|_2^2 + J(u(t_0)) = E(0) < d_3.
  \end{eqnarray}
Thus (i) is impossible.\par
  Now we assume that (ii) holds. Hence $u(t_0) \in \textit{\textbf{N}}_3$, and by Proposition 3.5 we have $J(u(t_0)) \geq d_3$. This contradicts \eqref{5.2}, and we obtain $(A1)$.

$(A2)$ By contradiction we assume that there is $t_1 \in (0, T_{{\rm{max}}})$ such that $u(t) \in \textit{\textbf{W}}_3^{\,-}$ on $[0, t_1)$, and $u(t_1) \not\in \textit{\textbf{W}}_3^{\,-}$. Then, it follows from the continuity of $J(u(t))$ and $I(u(t))$ in $t$ that either\par
  (i) $J(u(t_1) = d_3$, \quad or \quad (ii) $I(u(t_1)) = 0$. \par
  To obtain $(A2)$, it suffices for us to prove that neither of the cases (i), (ii) are true.  Noting that
   \begin{eqnarray*}
    \frac{1}{2}\|u(t_1)\|_2^2 + J(u(t_1)) = E(0) < d_3,
  \end{eqnarray*}
  we check that (i) is impossible.\par
  Now suppose that (ii) is true, that is $I(u(t_1)) = 0$, and $I(u(t)) < 0$ for all $t \in [0, t_1)$. Hence, we deduce from Lemma 3.4 (2) that
   $$\|\nabla u(t)\|_2 > \rho_3\ {\rm for}\ t \in [0, t_1){\rm ,}\quad {\rm and}\quad \|\nabla u(t_1)\|_2 \geq \rho_3 >0.
   $$
   Then $u(t_1) \in \textit{\textbf{N}}_3$. By Proposition 3.5, we have $J(u(t_1)) \geq d_3$, which is also impossible.\par\medskip

  \emph{Case $B$}: $p=3$ and $b\lam_1 <\lam < b\lam_1 + \delta$.\par
  Suppose by contradiction that there exists $t_1 \in (0, T_{{\rm{max}}})$ such that the solution $u(t) \in \textit{\textbf{N}}_3^{\,-} \cap J^{d_3^{\,-}}$ on $[0, t_1)$, and $u(t_1) \not\in \textit{\textbf{N}}_3^{\,-} \cap J^{d_3^{\,-}}$. Then, by the continuity of $J(u(t))$ and $I(u(t))$ in $t$ we obtain that either
  (i) $J(u(t_1) = d_3^{\,-}$, or (ii) $I(u(t_1)) = 0$. \par
  First, it follows from $E(0) < d_3^{\,-}$ that (i) is impossible.
  Now suppose that (ii) is true, and noting that $\textit{\textbf{N}}_3 \cap L^0 = \emptyset$, then one of the following cases holds:\par
   (a) $\|\nabla u(t_1)\|_2 =0$, (b) $u(t_1) \in \textit{\textbf{N}}_3 \cap L^+$, (c) $u(t_1) \in \textit{\textbf{N}}_3 \cap L^-$. \par
  If (a) holds, then $J(u(t_1))=0> d_3^{\,-}$, which is impossible as $E(0) < d_3^{\,-}$.\par
  If (b) holds, then we deduce from Lemma 3.11 (1) that $J(u(t_1))\geq d_3^{\,+} > d_3^{\,-}$, which contradicts $E(0) < d_3^{\,-}$.\par
  If (c) holds, then by the definition of $d_3^{\,-}$ we get $J(u(t_1)) \geq d_3^{\,-}$, which is also impossible.\par
  Hence (ii) is false, and we finish the proof of Case B.\par
  Finally, Case C can be deduced by Lemma 4.4 (2), Proposition 4.5 and the similar argument as in Case A.\hfill  $\Box$

\subsection{The 4-sublinear case}
\label{}\ \ \ \
  We prove Theorem 2.2 via the energy estimates for $1<p<3$.\\[0.4em]
{\bf{Proof of Theorem 2.2.}} Let $u \in H(T_{{\rm{max}}})$ be a solution for $\eqref{zh809-1}$ with $1<p<3$, and $T_{{\rm{max}}}$ is the maximal existence time of $u$. In view of \eqref{1.8}, we have
  \begin{eqnarray}\label{5.3}
  E(t) \geq \frac{1}{2}\|u_t (t)\|_2^2 + \frac{b}{2}\|\nabla u (t)\|_2^2 + \frac{a}{4}\|\nabla u (t)\|_2^4 - \frac{\hat{\lam}}{2\lam_1}\|\nabla u (t)\|_2^2 - \frac{S_{p+1}^{-\,(p+1)/2}}{p+1}\|\nabla u (t)\|_2^{p+1},
  \end{eqnarray}
for $0<t<T_{{\rm{max}}}$, where $\hat{\lam} = \max\{\lam, 0\}$. Let
\begin{eqnarray*}
 \varphi(s) = \frac{a}{4}s^4 - \frac{\hat{\lam}}{2\lam_1}s^2 - \frac{1}{(p+1)S_{p+1}^{(p+1)/2}}s^{p+1}, \ \ {\rm for}\ s\geq 0.
\end{eqnarray*}
By virtue of $a>0$ and $1<p<3$, we know that the function $\varphi$ is bounded from below. \par

Taking $\varphi_0 = \inf\limits_{s\geq 0} \varphi(s)$, then it follows from \eqref{5.3} and the fact of $E(t) = E(0)$ that
\begin{eqnarray*}
  \frac{1}{2}\|u_t (t)\|_2^2 + \frac{b}{2}\|\nabla u(t)\|_2^2 \leq E(0) - \varphi_0,\ \ {\rm for}\ 0<t<T_{{\rm{max}}},
\end{eqnarray*}
which proves \eqref{2.2}.\par
  Now, we turn to prove \eqref{2.3}. Let $(u_0, u_1)$ satisfy $\|\nabla u_0\|_2 >0$ and $E(0) \leq 0$, then we deduce from \eqref{1.6}, \eqref{1.8} and Proposition 5.1 that
  \begin{eqnarray}\label{5.4}
      \left[\frac{b_0}{2} - \frac{S_{p+1}^{-(p+1)/2}}{p+1}\|\nabla u(t)\|_2^{p-1}\right] \|\nabla u(t)\|_2^2 \leq E(0) \leq 0, \qquad {\rm for\ all}\ \ t\in [0, T_{{\rm{max}}}).
  \end{eqnarray}

 To obtain \eqref{2.3}, we suppose by contradiction that there exists some $\bar{t}\in (0, T_{{\rm{max}}})$, such that $\|\nabla u(\bar{t})\|_2 =0$. Then from $\|\nabla u_0\|_2 >0$ and the continuity of $u(t)$ at $\bar{t}$, we deduce that there exists $t_1 \in (0, \bar{t})$ such that
 $$0< \|\nabla u(t_1)\|_2 < \left[\frac{1}{2}b_0 (p+1) S_{p+1}^{(p+1)/2}\right]^{1/(p-1)},
 $$
which contradicts with \eqref{5.4}.\par
 Thus, $\|\nabla u(t)\|_2 >0$ for all $t\in [0, T_{{\rm{max}}})$, and by virtue of \eqref{5.4}, we obtain \eqref{2.3}.
This completes the proof of Theorem 2.2.   \hfill  $\Box$

\subsection{The asymptotically 4-linear case}
 \label{}\ \ \ \   For $p=3$, we start by proving the boundedness and asymptotically behavior of the solution for \eqref{zh809-1}.\\[0.4em]
{\bf{Proof of Theorem 2.4.}} Let $u \in H(T_{{\rm{max}}})$ be a solution for $\eqref{zh809-1}$ with the maximal existence time $T_{{\rm{max}}}>0$.\par
\emph{Case $(H1)$}: $a> 1/\Lambda$\ \  and\ \  $\lambda \in \mathbb{R}$.\par
    For $p=3$, we know by \eqref{zh809-4} and \eqref{5.1} that
    \begin{eqnarray}\label{5.5}
      E(0) &=& \frac{1}{2}\|u_t (t)\|_2^2 + \frac{a}{4}\|\nabla u (t)\|_2^4 + \frac{b}{2}\|\nabla u (t)\|_2^2 - \frac{\lam}{2}\|u (t)\|_2^2 - \frac{1}{4}\|u (t)\|_4^{4} \nonumber \\
       &\geq& \frac{1}{2}\|u_t (t)\|_2^2 + \frac{b}{2}\|\nabla u (t)\|_2^2 + \frac{1}{4}\left(a-\frac{1}{\Lambda}\right) \|\nabla u (t)\|_2^4 -  \frac{\hat{\lam}}{2\lam_1}\|\nabla u (t)\|_2^2,
    \end{eqnarray}
    where $\hat{\lam} = \max\{\lam, 0\}$, $0<t<T_{{\rm{max}}}$.
    Noting that $a> 1/\Lambda$, the function
    $$\hat{h}(s) = \frac{1}{4}\left(a-\frac{1}{\Lambda}\right) s^2 -  \frac{\hat{\lam}}{2\lam_1}s,\ \ s \geq 0
    $$
     attains its minimum
      \begin{eqnarray*}
      h_1 = \min\limits_{s\geq 0} \hat{h}(s) =\left\{\begin{array}{ll}
      - \frac{\lam^2 \Lambda}{4\lam_1^2 \left(a\Lambda-1\right)},\ \ &{\rm for}\ \lambda >0,
    \\[1.2ex]
   0 , \ \ &{\rm for}\ \lambda \leq 0.
   \end{array}\right.
    \end{eqnarray*}
It follows from \eqref{5.5} that
$$\|u_t (t)\|_2^2 + b\|\nabla u (t)\|_2^2 \leq 2E(0)-2h_1,\quad {\rm for}\quad 0<t<T_{{\rm{max}}}.
$$
Then we obtain (2.4).\par

  \emph{Case $(H2)$}: $a= 1/\Lambda$\ \  and\ \  $\lambda < b\lambda_1$.\par
  Combining with \eqref{1.6} and \eqref{5.5}, we have
  $$E(0) \geq \frac{1}{2}\|u_t (t)\|_2^2 + \frac{b_0}{2} \|\nabla u (t)\|_2^2,\quad  {\rm for}\  t\in [0, T_{{\rm{max}}}).
  $$
Recalling that $b_0 >0$ for $\lambda < b\lambda_1$, we get (2.4).\par

   \emph{Case $(H3)$}: $0<a<1/\Lambda$,\ \ $\lambda < b\lambda_1$,\ \ $u_0 \in \textit{\textbf{W}}_{3} ^{\, +}$ and $E(0)<d_3$.\par
   From Theorem 2.1, we deduce that $u(t)\in \textit{\textbf{W}}_{3} ^{\, +}$ for $t\in [0, T_{{\rm{max}}})$, where $\textit{\textbf{W}}_{3} ^{\, +} = J^{d_3} \cap \textit{\textbf{N}}_{3} ^{\, +}$. Then by $I(u(t))\geq 0$, and combining with \eqref{1.6} and \eqref{1.13} for $p=3$, we have
   \begin{eqnarray}\label{5.6}
     J(u(t)) \geq   \frac{b_0}{4} \|\nabla u(t)\|_2^2, \quad {\rm for}\quad 0<t<T_{{\rm{max}}},
  \end{eqnarray}
 where $b_0$ appears in \eqref{1.7}. In view of $E(0)<d_3$, \eqref{5.1} and \eqref{5.6},
 we obtain (2.4).

   Finally, we prove \eqref{2.6}.
It follows from \eqref{2.5} and \eqref{3.11} that $\psi_1 \in S$, and
 \begin{eqnarray*}
   d_3  &=& \inf\limits_{u \in S} \frac{\left(b\|\nabla u\|_2^2 - \lambda \|u\|_2^2\right)^2}{4 \left(\|u\|_{4}^{4} - a\|\nabla u\|_2^4\right)}\\
   &\leq&  \frac{\left(b\|\nabla \psi_1\|_2^2 - \lambda \|\psi_1\|_2^2\right)^2}{4 \left(\|\psi_1\|_{4}^{4} - a\|\nabla \psi_1\|_2^4\right)}\\
   &=&  \frac{(b\lam_1 - \lam)^2 \|\psi_1\|_2^{4}}{4\left(\|\psi_1\|_{4}^{4} - a\|\nabla \psi_1\|_2^4\right)}.
   \end{eqnarray*}
We find $d_3 \rightarrow 0$ as $\lam \rightarrow b\lam_1$. Hence, by virtue of \eqref{5.1} and \eqref{5.6}, we can see that
 $$\frac{1}{2}\|u_t (t)\|_2^2 + \frac{b_0}{4} \|\nabla u (t)\|_2^2 \leq E(0) <d_3 \rightarrow 0,
 $$
as $\lam \rightarrow b\lam_1$, for any $t\in [0, T_{{\rm{max}}})$. Then we arrive at \eqref{2.6} and finish the proof of Theorem 2.4. \hfill $\Box$\par\smallskip

    Next, we turn to show the blow-up of the solution for \eqref{zh809-1} with $p=3$ and $\lam <b\lam_1$.\\[0.4em]
{\bf{Proof of Theorem 2.6.}}
  Assume for contradiction that $u \in H(T_{{\rm{max}}})$ is a global solution of \eqref{zh809-1} with $p=3$, $0<a<1/\Lambda$ and $\lam < b\lam_1$, that is, the maximal existence time of $u$ is $T_{{\rm{max}}} = \infty$. Define
 $$M(t) = \|u(t)\|_2^2,
 $$
 then $M \in C^2 (0, +\infty)$, and
 \begin{eqnarray}\label{5.7}
   M'(t) &=& 2 \int_{\Omega} uu_t \dif x, \nonumber\\
    M''(t) &=& 2 \|u_t (t)\|_2^2 - 2a\|\nabla u (t)\|_2^4 - 2b \|\nabla u (t)\|_2^2 + 2\lam \|u (t)\|_2^2 + 2\|u (t)\|_4^4.
 \end{eqnarray}

  \emph{Case $(h1)$}: $E(0)<0$.\par
 We infer from \eqref{5.1} and \eqref{5.7} that
 \begin{eqnarray}\label{5.8}
   M''(t) = 6 \|u_t (t)\|_2^2 + 2b \|\nabla u (t)\|_2^2 - 2\lam \|u (t)\|_2^2 -8E(0).
 \end{eqnarray}
 Then, by virtue of \eqref{1.6}, we have
 \begin{eqnarray}\label{5.9}
   M''(t) \geq 6 \|u_t (t)\|_2^2 + 2b_0 \lam_1 M(t) -8E(0).
 \end{eqnarray}

Noting that $b_0 \geq 0$ for $\lam < b\lam_1$, we get $M''(t) \geq -8E(0)$ and $M'(t) \geq -8E(0) t + M'(0)$. Since $E(0)<0$, we infer that there exists $t_0 >0$, such that $M'(t) >0$ and $M(t) >0$, for $t \geq t_0$. Then,
 \begin{eqnarray}\label{5.10}
   \left(M^{-1/2}\right)'(t_0) = - \frac{1}{2M^{3/2}(t_0)} M'(t_0) <0.
 \end{eqnarray}

   Furthermore, we deduce from \eqref{5.9} that
   \begin{eqnarray*}
     MM''-\frac{3}{2}\left(M'\right)^2 \geq 6\|u\|_2^2 \|u_t\|_2^2 - 8 E(0) \|u\|_2^2 -6 \left(\int_\Omega uu_t \dif x\right)^2 >0,
   \end{eqnarray*}
 and
  \begin{eqnarray}\label{5.11}
    \left(M^{-1/2}\right)''(t) = -\frac{1}{2M^{5/2}} \left[MM'' - \frac{3}{2}\left(M'\right)^2\right] <0,\qquad {\rm for}\ t\geq t_0.
  \end{eqnarray}
Combining with \eqref{5.10} and \eqref{5.11}, we have
  \begin{eqnarray}\label{5.12}
    \left(M^{-1/2}\right)'(t) \leq \left(M^{-1/2}\right)'(t_0) <0,\ \ {\rm for}\ t>t_0.
  \end{eqnarray}
 It follows from \eqref{5.11} and \eqref{5.12} that there exists $T_1 >0$, such that $M^{-1/2}(t) \rightarrow 0$ and $M(t) \rightarrow \infty$, as $t\rightarrow T_1$. Thus, we reach a contradiction, which implies that $T_{{\rm{max}}} < \infty$.\par\smallskip
 \emph{Case $(h2)${\rm :} $E(0)=0$ and} $\int_\Omega u_0 u_1 \dif x > 0$.\par\smallskip
 By using \eqref{5.9} and $E(0)=0$, we obtain $(M^{-1/2})''(t) \leq 0$ for $t\geq 0$, and
 \begin{eqnarray*}
    \left(M^{-1/2}\right)'(t) \leq \left(M^{-1/2}\right)'(0) = - \frac{1}{\|u_0\|_2^3} \int_\Omega u_0 u_1 \dif x <0.
  \end{eqnarray*}
  Hence, there exists $T_1 >0$, such that $M^{-1/2}(t) \rightarrow 0$ and $M(t) \rightarrow \infty$, as $t\rightarrow T_1$, which contradicts $T_{{\rm{max}}} = \infty$.\par\smallskip
   \emph{Case $(h3)${\rm :} $0< E(0) <d_3$ and} $u_0 \in \textit{\textbf{W}}_3^{\,-}$.\par\smallskip
  By applying Theorem 2.1, we obtain $u(t) \in \textit{\textbf{W}}_3^{\,-}$ for all $t>0$. Combining \eqref{5.8} with \eqref{3.12}, we infer that
  \begin{eqnarray*}
    M''(t) \geq 6\|u_t (t)\|_2^2 +8\eta,
  \end{eqnarray*}
 where $\eta = d_3 - E(0) >0$. Hence,
 \begin{eqnarray*}
   M'(t) \geq 8\eta t + M'(0) = 8\eta t + \int_\Omega u_0 u_1 \dif x,
 \end{eqnarray*}
 and there exists $t_0 >0$, such that $M'(t)>0$ for $t\geq t_0$.
  Then using the same way as in the estimates of \eqref{5.10}-\eqref{5.12}, we reach a contradiction and deduce that $T_{{\rm{max}}} < \infty$.\par\smallskip
   \emph{Case $(h4)${\rm :} $E(0)\geq d_3$, \eqref{2.8} and \eqref{2.9} hold} .\par\smallskip
   We split the proof of case $(h4)$ into two steps. \par
   \emph{Step} 1: We claim that
   \begin{eqnarray}\label{5.13}
     u(t) \in \textit{\textbf{N}}_3^{\,-},\ \ \ {\rm for\ all}\ t\geq 0.
   \end{eqnarray}

   If \eqref{5.13} is false, and noting that $u_0 \in \textit{\textbf{N}}_3^{\,-}$, then there would exist a time $t_1 >0$, such that $u(t_1) \in \textit{\textbf{N}}_3$ and
  \begin{eqnarray}\label{5.14}
    I(u(t)) < 0, \ \ \ {\rm for}\ 0\leq t < t_1.
  \end{eqnarray}
  It follows from \eqref{5.7} and \eqref{5.14} that
  \begin{eqnarray*}
    M''(t) = 2\|u_t (t)\|_2^2 - 2I(u(t)) >0,  \ \ {\rm for}\ 0\leq t < t_1,
  \end{eqnarray*}
  which ensures that $M'(t)$ is strictly increasing on $[0, t_1)$. Then, by $M'(0) = 2\int_\Omega u_0 u_1 \dif x > 0$, we have $M'(t) > M'(0) > 0$ for every $t\in (0, t_1)$. This implies that $M(t)$ is strictly increasing on $[0, t_1)$, and
  \begin{eqnarray*}
   M(t) > M(0) = \|u_0\|_2^2 > \frac{4}{b_0\lambda_1}E(0),\ \ {\rm for}\ 0< t < t_1.
  \end{eqnarray*}
  From the monotonicity of $M(t)$ and the continuity of $u(t)$ at $t_1$, we deduce that
  \begin{eqnarray}\label{5.15}
   \|u(t_1)\|_2^2 = M(t_1) > \frac{4}{b_0\lambda_1}E(0).
  \end{eqnarray}

   On the other hand, by virtue of \eqref{1.6}, \eqref{1.13} and $p=3$, we get
   \begin{eqnarray}\label{5.16}
     E(t) &=& \frac{1}{2}\|u_t (t)\|_2^2 + \frac{b}{4}\|\nabla u (t)\|_2^2 - \frac{\lam}{4}\|u(t)\|_2^2 + \frac{1}{4}I(u(t)) \nonumber\\
     &\geq& \frac{1}{2}\|u_t (t)\|_2^2 + \frac{b_0 \lam_1}{4}\|u (t)\|_2^2 + \frac{1}{4}I(u(t)).
   \end{eqnarray}
Taking $t=t_1$ in \eqref{5.16}, then it follows from  $I(u(t_1))=0$ and Proposition 5.1 that
 $$\|u(t_1)\|_2^2 \leq \frac{4}{b_0 \lam_1} E(0),
 $$
which contradicts with \eqref{5.15}. Hence, we prove \eqref{5.13}.\par
\emph{Step} 2: Next, we prove the blow-up result under the hypothesis of $(h4)$.\par
   By \eqref{5.13} and the proof above, we have $M''(t)>0$ and $M'(t)>0$, for all $t>0$. Thus, we deduce from (h4) that
 \begin{eqnarray}\label{5.17}
    M(t) > M(0) > \frac{4}{b_0\lambda_1}E(0),\ \ {\rm for\ all}\ t>0.
\end{eqnarray}
By virtue of \eqref{5.9}, \eqref{5.17} and \eqref{2.8}, we have $M''(t) \geq 6 \|u_t(t)\|_2^2$. Repeating the same argument as in proving \eqref{5.11} and \eqref{5.12}, we obtain $(M^{-1/2})''(t) \leq 0$ and $(M^{-1/2})'(t) < 0$ for all $t> 0$. Then we reach a contradiction and prove the case $(h4)$ of Theorem 2.6.\par\smallskip
   {\emph {Proof of $\eqref{2.10}$}:\par\smallskip
  Noting that $E(0)\leq 0$, then we apply \eqref{1.6}, \eqref{1.19} and Proposition 5.1 to obtain that
  \begin{eqnarray}\label{5.18}
     \frac{b_0}{2}  \|\nabla u(t)\|_2^2- \frac{1}{4}\left(\frac{1}{\Lambda}-a\right)\|\nabla u(t)\|_2^{4} \leq E(0) \leq 0, \qquad {\rm for\ all}\ \ t\in [0, T_{{\rm{max}}}).
  \end{eqnarray}

 To reach \eqref{2.10}, we suppose by contradiction that there exists some $\bar{t}\in (0, T_{{\rm{max}}})$, such that $\|\nabla u(\bar{t})\|_2 =0$. Then from $\|\nabla u_0\|_2 >0$ and the continuity of $u(t)$ at $\bar{t}$, we deduce that there exists $t_1 \in (0, \bar{t})$ such that
 $$0< \|\nabla u(t_1)\|_2^2 < \frac{2b_0 \Lambda}{1-a\Lambda},
 $$
which contradicts with \eqref{5.18}. Thus, $\|\nabla u(t)\|_2 >0$ for all $t\in [0, T_{{\rm{max}}})$. Then by \eqref{5.18}, we obtain \eqref{2.10} and finish the proof of Theorem 2.6.
\hfill$\Box$\par\smallskip

   We finish this subsection by proving the blow-up of solution for \eqref{zh809-1} when $p=3$ and $b\lam_1 <\lam <b\lam_1 +\delta$.\\[0.3em]
{\bf{Proof of Theorem 2.8.}}  Suppose by contradiction that the maximal existence time of the solution $u\in H(T_{{\rm{max}}})$ is $T_{{\rm{max}}} = \infty$. Then it follows from Theorem 2.1 that $I(u(t)) <0$ for all $t>0$. Furthermore, by Lemma 3.11 (3) we obtain that
\begin{eqnarray}\label{5.19}
  b \|\nabla u(t)\|_2^2 - \lam \|u(t)\|_2^2 > 4d_3^{\,-},\qquad {\rm for\ all}\quad t>0.
\end{eqnarray}

Define $M(t) = \|u(t)\|_2^2$ and $\eta_3 = d_3^{\,-} - E(0)$, then $M \in C^2 (0, +\infty)$ and $\eta_3 >0$. Combining with \eqref{5.8} and \eqref{5.19}, we get
\begin{eqnarray}\label{5.20}
  M''(t) > 6\|u_t(t)\|_2^2 + 8\eta_{3} >0,
\end{eqnarray}
and so $M'(t) \geq 8\eta_3 t + \int_\Omega u_0 u_1 \dif x$. Hence, there exists $t_0 >0$ such that $M'(t) >0$ for all $t>t_0$. In view of \eqref{5.20}, we infer that there exists $t_1 > t_0$ such that $M'(t_1)>0$, $M(t_1)>0$ and $M(t)>0$ for any $t>t_1$. It then follows exactly as in the proof of \eqref{5.10}-\eqref{5.12} that there exists $T_1 >0$, such that $M^{-1/2}(t) \rightarrow 0$ and $M(t) \rightarrow \infty$ as $t\rightarrow T_1$, which is impossible.\par
   Thus, we deduce that $T_{{\rm{max}}} < \infty$.\hfill $\Box$

\subsection{The 4-superlinear case}
\label{}\ \ \ \ First, we show that the solution of \eqref{zh809-1} is bounded uniformly in time for $3<p<5$ and $\lam \leq b\lam_1$, provided that the initial data starting in the stable set and having low energy $E(0)< d_p$.\\[0.3em]
{\bf{Proof of Theorem 2.9.}} Let $u\in H(T_{{\rm{max}}})$ be a solution for $\eqref{zh809-1}$ with the maximal existence time $T_{{\rm{max}}}>0$. Noting that $\lam \leq b\lam_1$, $u_0 \in \textit{\textbf{W}}_{p} ^{\, +}$ and $E(0)<d_p$,
and by virtue of Theorem 2.1, we have $u(t)\in \textit{\textbf{W}}_{p} ^{\, +}$ for $t\in [0, T_{{\rm{max}}})$. Then we deduce from (4.12) and $I(u(t))>0$ that
  \begin{eqnarray}
    a\left(\frac{1}{4}-\frac{1}{p+1}\right)\|\nabla u(t)\|^4_2 \leq J(u(t)) <d_p, \ \ \ {\rm for}\ t\in [0, T_{{\rm{max}}}).
  \end{eqnarray}
Moreover, noting that $E(0)<d_p$ and using energy identity, we have
 \begin{eqnarray}
 E(t) =  \frac{1}{2}\|u_t (t)\|_2^2 + J(u(t))= E(0)<d_p,
 \end{eqnarray}
 which leads to \eqref{2.14}. \hfill $\Box$\par\smallskip

   Now, we study the blow-up behavior and vacuum region of the solution for \eqref{zh809-1} with $p>3$.\\[0.3em]
{\bf{Proof of Theorem 2.10.}}
 Let $u \in H(T_{{\rm{max}}})$ be a solution of \eqref{zh809-1} with $p>3$, and suppose to the contrary that the maximal existence time of $u$ is $T_{{\rm{max}}} = \infty$. Define $M(t) = \|u(t)\|_2^2$, then $M \in C^2 (0, +\infty)$, $M'(t) = 2 \int_{\Omega} uu_t \dif x$, and
 \begin{eqnarray}
 M''(t)  &= & 2 \|u_t (t)\|_2^2 - 2a\|\nabla u (t)\|_2^4 - 2b \|\nabla u (t)\|_2^2 + 2\lam \|u (t)\|_2^2 + 2\|u (t)\|_{p+1}^{p+1}\label{5.23}\\
   &=& (p+3)\|u_t (t)\|_2^2 + \frac{(p-3)a}{2}\|\nabla u (t)\|_2^4 + (p-1)b \|\nabla u (t)\|_2^2\nonumber\\
   && - (p-1)\lam \|u (t)\|_2^2 - 2(p+1)E(0).\nonumber
 \end{eqnarray}

   In order to reach a contradiction, we need to show that there exists $T_1 >0$ such that
   \begin{eqnarray}\label{zh5.25}
      M(t) \rightarrow \infty,\qquad {\rm as}\ t\rightarrow T_1.
   \end{eqnarray}

 \emph{Cases $(a1)$-$(a2)${\rm :}} Similar to the proof of Theorem 2.6, we deduce that there exists $t_0 >0$, such that
 $M''(t) \geq (p+3) \|u_t(t)\|_2^2$, $(M^{-\alpha})''(t) \leq 0$ and $(M^{-\alpha})'(t) < 0$ for all $t> t_0$, where $\alpha = (p-1)/4$. Hence, we get \eqref{zh5.25} which contradicts with $T_{{\rm{max}}} = \infty$. Then we prove the cases $(a1)$-$(a2)$ of Theorem 2.10.\par\smallskip
 \emph{Case $(a3)${\rm :}} We infer from Theorem 2.1 that $I(u(t))<0$ for all $t>0$. Hence, by \eqref{5.23} and repeat the argument as in \eqref{5.10}-\eqref{5.12}, we obtain \eqref{zh5.25}. We then reach a contradiction and derive that $T_{{\rm{max}}} < \infty$.\par\smallskip
 \emph{Case $(a4)${\rm :}} In view of \eqref{5.23}, and using the same way as in proving the case (h4) of Theorem 2.6, we have $u(t) \in \textit{\textbf{N}}_p^{\,-}$ for all $t>0$, which enables us to get \eqref{zh5.25} and $T_{{\rm{max}}} < \infty$.\par\smallskip
 \emph{Case $(b1)${\rm :}} It follows from \eqref{5.23} that
 \begin{eqnarray}\label{zh5.26}
     M''(t) \geq (p+3)\|u_t (t)\|_2^2 + \frac{p-3}{2}a\lam_1^2 M(t)^2 + (p-1)(b\lam_1 -\lam) M(t) - 2(p+1)E(0).
 \end{eqnarray}

 To proceed, we define
 \begin{eqnarray*}
   h(s) = \frac{p-3}{2}a\lam_1^2 s^2 - (p-1)(\lam - b\lam_1) s.
 \end{eqnarray*}
By $\lam >b\lam_1$, then a direct calculation shows that $h(s)$ attains its minimum at the point
 $$s_0 = \frac{(p-1)(\lam - b\lam_1)}{(p-3)a\lam_1^2},
 $$
 and $h(s_0) =  -h_0$, where $h_0$ is appearing in \eqref{2.16}.

   Noting that $-2(p+1)E(0) > h_0$, then we deduce from \eqref{zh5.26} that $M''(t) > (p+3)\|u_t\|_2^2$ and $M''(t) \geq -2(p+1)E(0) - h_0 >0$ for $t>0$. By the same arguments that
 we used in proving the Case $(a1)$, we get \eqref{zh5.25} and show that $T_{{\rm{max}}} < \infty$.\par\smallskip
 \emph{Case $(b2)${\rm :}} We infer from \eqref{2.8} that
 \begin{eqnarray}\label{zh5.27}
   \left(M^{-\alpha}\right)'(0)= - \frac{\alpha}{M^{\alpha +1}(0)}M'(0) <0,\qquad {\rm where }\ \alpha = \frac{p-1}{4}.
 \end{eqnarray}

On the other hand, it follows from  \eqref{zh5.26} and $-2(p+1)E(0) = h_0$ that $M''(t) \geq (p+3)\|u_t\|_2^2$. Combining this with \eqref{zh5.27}, we obtain $(M^{-\alpha})''(t) \leq 0$ and $(M^{-\alpha})'(t) < 0$ for all $t> 0$. Thus, we have \eqref{zh5.25}, which proves Case $(b2)$.\par\smallskip
{\emph {Proof of \eqref{2.17}}: By virtue of \eqref{1.6} and \eqref{1.8}, we deduce that
 \begin{eqnarray*}
   E(t) \geq \frac{a}{4}\|\nabla u\|_2^4 - \frac{1}{(p+1)S_{p+1}^{(p+1)/2}}\|\nabla u\|_2^{p+1}.
 \end{eqnarray*}
 Moreover, applying Proposition 5.1 and observing that $E(0)\leq 0$, we have
 \begin{eqnarray*}
   \frac{a}{4}\|\nabla u\|_2^4 \leq \frac{1}{(p+1)S_{p+1}^{(p+1)/2}}\|\nabla u\|_2^{p+1}.
 \end{eqnarray*}
 The rest of the proof is similar to the derivation of \eqref{2.3} in Theorem 2.2.  \hfill $\Box$

%

\section{Compatibility of the assumptions for main theorems}
 \setcounter{equation}{0}
\label{}\ \ \ \     We devote this section to prove  the assumptions of main theorems are  compatible respectively.\par\smallskip

 \emph{Existence of initial data leading to Theorem }2.2:

 \begin{prop}
     Let $1<p<3$, $\lam <b\lam_1$, and suppose that $a>0$ is sufficiently small, then there exists such initial data $(u_0, u_1)$ which satisfy $\eqref{1.2}$, $E(0)\leq 0$ and $\|\nabla u_0\|_2 >0$.
   \end{prop}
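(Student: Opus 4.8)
The plan is to set $u_1\equiv 0$, so that $E(0)=J(u_0)$, and thereby reduce the claim to producing a single $u_0\in H^2(\Omega)\cap H^1_0(\Omega)$ with $\|\nabla u_0\|_2>0$ and $J(u_0)\leq 0$, valid for all sufficiently small $a>0$. First I would fix once and for all a nonzero test function $\phi\in H^2(\Omega)\cap H^1_0(\Omega)$ (for instance any $\phi\in C_c^\infty(\Omega)$ with $\phi\not\equiv 0$), and search for $u_0$ of the form $\tau\phi$ with $\tau>0$. Using the upper estimate in \eqref{1.6} with $c_1$ as in \eqref{1.7} --- note that $c_1$ depends only on $b$, $\lambda$, $\lambda_1$ and not on $a$ --- one gets
\[
  J(\tau\phi)\ \leq\ \frac{a\tau^4}{4}\|\nabla\phi\|_2^4\ +\ \frac{c_1\tau^2}{2}\|\nabla\phi\|_2^2\ -\ \frac{\tau^{p+1}}{p+1}\|\phi\|_{p+1}^{p+1}.
\]

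Next I would exploit the assumption $1<p<3$, i.e.\ $2<p+1<4$, together with a careful ordering of the quantifiers. Since $p+1>2$ and $\|\phi\|_{p+1}>0$, the term $-\tfrac{1}{p+1}\tau^{p+1}\|\phi\|_{p+1}^{p+1}$ eventually dominates $\tfrac{c_1}{2}\tau^2\|\nabla\phi\|_2^2$, so I would fix a value $\tau_*>0$ --- depending only on $b$, $\lambda$, $p$, $\phi$ --- with
\[
  \frac{c_1\tau_*^2}{2}\|\nabla\phi\|_2^2\ \leq\ \frac{\tau_*^{p+1}}{2(p+1)}\|\phi\|_{p+1}^{p+1}.
\]
With $\tau_*$ now frozen, the leftover term $\tfrac{a}{4}\tau_*^4\|\nabla\phi\|_2^4$ is linear in $a$, so there is $\varepsilon_0=\varepsilon_0(\tau_*,\phi)>0$ (concretely $\varepsilon_0=\tau_*^{\,p-3}\|\phi\|_{p+1}^{p+1}\big/\big(2(p+1)\|\nabla\phi\|_2^4\big)$, a positive finite number since $p<3$) such that
\[
  \frac{a\tau_*^4}{4}\|\nabla\phi\|_2^4\ \leq\ \frac{\tau_*^{p+1}}{2(p+1)}\|\phi\|_{p+1}^{p+1}\qquad\text{whenever }0<a<\varepsilon_0.
\]
Combining the three displays yields $J(\tau_*\phi)\leq 0$ for every $0<a<\varepsilon_0$. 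Hence $(u_0,u_1)=(\tau_*\phi,\,0)$ satisfies \eqref{1.2}, has $\|\nabla u_0\|_2=\tau_*\|\nabla\phi\|_2>0$, and $E(0)=J(u_0)\leq 0$, which is exactly what is required.

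The only genuine obstruction is the nonlocal quartic term $\tfrac{a}{4}\|\nabla u\|_2^4$ in $J$: because $p+1<4$, it cannot be absorbed into $-\tfrac{1}{p+1}\|u\|_{p+1}^{p+1}$ at large amplitude, so no estimate uniform in $a$ is available, and the hypothesis that $a$ be small is genuinely needed. The way around this is precisely the quantifier ordering above --- pick the amplitude $\tau_*$ first (using only $b$, $\lambda$, $p$ and the fixed $\phi$), and only afterwards shrink $a$. If in addition one wants $E(0)<0$ strictly, it suffices to replace the factors $\tfrac12$ in the last two displays by $\tfrac13$.
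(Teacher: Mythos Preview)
Your proof is correct and is in fact more streamlined than the paper's. Both arguments choose $u_0=\tau\phi$ for a fixed test function and show $J(u_0)<0$ once $a$ is small; the difference lies in how the scale $\tau$ and the threshold on $a$ are selected. The paper works harder to make the smallness condition on $a$ explicit: it introduces the Sobolev-type quantity $\sigma=\inf\|\nabla u\|_2^2/\|u\|_{p+1}^2$ over $H^2\cap H^1_0$, imposes the quantitative bound $[(3-p)/(c_1(p+1))]^{(3-p)/(p+1)}[(2p-2)/(a(p+1))]^{(p-1)/(p+1)}>\sigma$, chooses $v_0$ nearly realizing $\sigma$, and then \emph{optimizes} $k\mapsto J(kv_0)/k^2$ to find the exact minimizer $k_0$, verifying $\phi(k_0)<0$ by an explicit computation. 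It also allows a nonzero $u_1$. Your argument bypasses all of this by the clean quantifier ordering you highlight: freeze an arbitrary $\phi$, pick $\tau_*$ (using only $p+1>2$) so that the $\tau^{p+1}$ term already dominates the $\tau^2$ term, and only then shrink $a$ to kill the remaining quartic. What the paper's route buys is an explicit, Sobolev-constant-based threshold for $a$; what yours buys is a shorter proof with no optimization and no auxiliary constant $\sigma$.
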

\begin{proof}
  Let
   $$\sigma= \inf_{\mbox{\tiny$\begin{array}{c}
   u\in H^1_0(\Omega)\cap H^2 (\Omega)\\
       \|\nabla u\|_2 \neq 0
       \end{array}$}} \frac{~\|\nabla u\|_2^2~}{\|u\|^2_{p+1}},
   $$
   and suppose that $a>0$ is sufficiently small such that
   $$\left[\frac{3-p}{c_1(p+1)}\right]^{\frac{3-p}{p+1}} \left[\frac{2p-2}{a(p+1)}\right]^{\frac{p-1}{p+1}} >\sigma,
   $$
   where $c_1$ is appearing in \eqref{1.7}, then we may choose $v_0 \in H_0^1(\Omega) \cap H^2(\Omega)$ satisfying $\|\nabla v_0\|_2 >0$, and
   \begin{eqnarray}\label{6.1}
  \frac{~\|\nabla v_0\|_2^2~}{\|v_0\|^2_{p+1}} < \left[\frac{3-p}{c_1(p+1)}\right]^{\frac{3-p}{p+1}} \left[\frac{2p-2}{a(p+1)}\right]^{\frac{p-1}{p+1}}.
   \end{eqnarray}

  Let $v_1 \in H_0^1(\Omega)$ with $\|v_1\|_2 =1$, and we take $(u_0, u_1) = (kv_0, m v_1)$, where $k$, $m>0$ will be chosen appropriately later such that $(u_0, u_1)$ satisfy $E(0)\leq 0$. In fact, let
   $$\phi(k) = \frac{a k^2}{4} \|\nabla v_0\|_2^4 - \frac{k^{p-1}}{p+1}  \|v_0\|_{p+1}^{p+1} + \frac{1}{2} \left(b\|\nabla v_0\|_2^2 - \lam\|v_0\|_2^2\right),
   $$
then by virtue of $1<p<3$, \eqref{6.1} and
$$\phi'(k) = \frac{a k}{2} \|\nabla v_0\|_2^4 - \frac{(p-1)k^{p-2}}{p+1}  \|v_0\|_{p+1}^{p+1}.
$$
we can see that $\phi(k)$ attains its infimum at the point
$$k_0 = \left[\frac{2(p-1)\|v_0\|_{p+1}^{p+1}}{a(p+1)\|\nabla v_0\|_2^4}\right]^{\frac{1}{3-p}},
$$
and
\begin{eqnarray*}
  \phi(k_0) &=& -\frac{3-p}{2p+2} \left[\frac{2p-2}{a(p+1)}\right]^{\frac{p-1}{3-p}} \|v_0\|_{p+1}^{\frac{2p+2}{3-p}} \|\nabla v_0\|_{2}^{-\frac{4(p-1)}{3-p}} + \frac{1}{2} \left(b\|\nabla v_0\|_2^2 - \lam\|v_0\|_2^2\right)\\[0.2em]
  &\leq& \frac{1}{2} \|\nabla v_0\|_{2}^{-\frac{4(p-1)}{3-p}} \left(-\frac{3-p}{p+1} \left[\frac{2p-2}{a(p+1)}\right]^{\frac{p-1}{3-p}} \|v_0\|_{p+1}^{\frac{2p+2}{3-p}} + c_1 \|\nabla v_0\|_{2}^{\frac{2p+2}{3-p}} \right)\\[0.2em]
  &<& 0.
\end{eqnarray*}
Hence, $J(k_0 v_0) = k_0^2 \phi(k_0) <0$. Selecting $m=m_1 < \sqrt{-J(k_0 v_0)}$, then for $(u_0, u_1) = (k_0 v_0, m_1 v_1)$,
 $$E(0) = \frac{m_1 ^2}{2} + J(k_0 v_0) <0,
 $$
 and we finish the proof of Proposition 6.1.
 \end{proof}

\emph{Existence of initial data leading to Theorem }2.4:
 \begin{prop}
     Let $p=3$, and assume that $0<a<1/\Lambda$, $\lam <b\lam_1$, then there exists such initial data $(u_0, u_1)$ which satisfy $\eqref{1.2}$, $E(0)< d_3$, $u_0 \in \textit{\textbf{W}}_{3} ^{\, +}$.\par
     Moreover, suppose that
 \begin{eqnarray}\label{6.2}
   a< \frac{1}{\lam_1^2 |\Omega|},
 \end{eqnarray}
 then $\eqref{2.5}$ holds.
   \end{prop}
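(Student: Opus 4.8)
The plan is to use the inclusion $B_{\hat r_3}\subseteq \textit{\textbf{W}}_3^{\,+}$ supplied by Theorem 3.7 for the first assertion, and the eigenvalue relation $-\Delta\psi_1=\lam_1\psi_1$ together with H\"older's inequality for the second.

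\medskip
\noindent\textbf{First assertion.} Since $p=3$, $0<a<1/\Lambda$ and $\lam<b\lam_1$, Theorem 3.7 gives a number $\hat r_3>0$ (defined by \eqref{3.13}) with $B_{\hat r_3}\subseteq\textit{\textbf{W}}_3^{\,+}=J^{d_3}\cap\textit{\textbf{N}}_3^{\,+}$, and $d_3>0$ by Proposition 3.5. First I would fix any $v_0\in H^2(\Omega)\cap H_0^1(\Omega)$ with $\|\nabla v_0\|_2\neq 0$ (for instance $v_0=\psi_1$, which lies in $H^2(\Omega)\cap H_0^1(\Omega)$ by elliptic regularity on the smooth domain $\Omega$), and set $u_0=k v_0$ with $0<k<\hat r_3/\|\nabla v_0\|_2$. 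Then $\|\nabla u_0\|_2<\hat r_3$, so $u_0\in B_{\hat r_3}\subseteq\textit{\textbf{W}}_3^{\,+}$; in particular $\|\nabla u_0\|_2\neq 0$ and $J(u_0)<d_3$. Finally I would choose $u_1\in H_0^1(\Omega)$ with $\|u_1\|_2^2<2\bigl(d_3-J(u_0)\bigr)$ — possible because $d_3-J(u_0)>0$; e.g.\ $u_1=m v_1$ with $v_1\in H_0^1(\Omega)$, $\|v_1\|_2=1$ and $0\le m^2<2(d_3-J(u_0))$. Then $(u_0,u_1)$ satisfies \eqref{1.2}, $u_0\in\textit{\textbf{W}}_3^{\,+}$, and by \eqref{1.11},
$$E(0)=\tfrac12\|u_1\|_2^2+J(u_0)<d_3.$$

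\medskip
\noindent\textbf{Second assertion.} Here I would simply compute. Since $-\Delta\psi_1=\lam_1\psi_1$ in $\Omega$ and $\psi_1\in H_0^1(\Omega)$, pairing with $\psi_1$ gives $\|\nabla\psi_1\|_2^2=\lam_1\|\psi_1\|_2^2$, hence $\|\nabla\psi_1\|_2^4=\lam_1^2\|\psi_1\|_2^4$. By H\"older's inequality with exponents $2$ and $2$,
$$\|\psi_1\|_2^2=\int_\Omega \psi_1^2\,\dif x\le \Bigl(\int_\Omega \psi_1^4\,\dif x\Bigr)^{1/2}|\Omega|^{1/2}=\|\psi_1\|_4^2\,|\Omega|^{1/2},$$
so $\|\psi_1\|_2^4\le|\Omega|\,\|\psi_1\|_4^4$ and therefore $\|\nabla\psi_1\|_2^4\le\lam_1^2|\Omega|\,\|\psi_1\|_4^4$. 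Consequently, if \eqref{6.2} holds, i.e.\ $a<1/(\lam_1^2|\Omega|)$, then
$$a\|\nabla\psi_1\|_2^4\le a\lam_1^2|\Omega|\,\|\psi_1\|_4^4<\|\psi_1\|_4^4,$$
which is precisely \eqref{2.5}.

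\medskip
There is no genuine obstacle in this proposition. The only minor points to keep in mind are that the datum $u_0$ must belong to $H^2(\Omega)\cap H_0^1(\Omega)$ rather than merely to $H_0^1(\Omega)$ — handled by scaling a fixed, sufficiently regular function such as $\psi_1$ instead of invoking an arbitrary element of the ball $B_{\hat r_3}$ — and that, for \eqref{2.5}, the whole argument collapses to the eigenvalue identity for $\psi_1$ combined with a single use of the Cauchy–Schwarz inequality.
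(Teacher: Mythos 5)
Your proof is correct and follows essentially the same route as the paper: the first assertion via the inclusion $B_{\hat r_3}\subseteq \textit{\textbf{W}}_3^{\,+}$ from Theorem 3.7 and a small choice of $u_1$, the second via the eigenvalue identity $\|\nabla\psi_1\|_2^2=\lam_1\|\psi_1\|_2^2$ combined with H\"older's inequality $\|\psi_1\|_2^2\leq|\Omega|^{1/2}\|\psi_1\|_4^2$. Your explicit scaling of a fixed $H^2\cap H^1_0$ function and the sharper bound $\|u_1\|_2^2<2(d_3-J(u_0))$ are minor cosmetic refinements of the paper's argument.
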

 \begin{proof}
 Recalling Theorem 3.7 and choosing $u_0 \in H_0^1(\Omega) \cap H^2(\Omega)$ which satisfies $0< \|\nabla u_0\| \leq \hat{r}_3$ with $\hat{r}_3$ being defined by \eqref{3.13}, then we have $u_0 \in \textit{\textbf{W}}_{3} ^{\, +} = J^{d_3} \cap \textit{\textbf{N}}_3^{\,+}$. Let $u_1 \in H_0^1(\Omega)$ with $\|u_1\|_2^2 < d_3 -J(u_0)$, then we obtain that $(u_0, u_1)$ satisfying \eqref{1.2}, $u_0 \in \textit{\textbf{W}}_{3} ^{\, +}$ and $E(0)< d_3$.\par
 Next, we show that \eqref{6.2} leads to \eqref{2.5}. In fact, noting $\|\nabla \psi_1\|_2^2 = \lam_1 \|\psi_1\|_2^2$ and
   $$\|\psi_1\|_2^2 \leq |\Omega|^{1/2} \|\psi_1\|_4^2,
   $$
   then we infer that
   \begin{eqnarray*}
     \|\psi_1\|_4^4 - a \|\nabla \psi_1\|_2^4 \geq \frac{1}{|\Omega|} \|\psi_1\|_2^4 -a \lam_1^2 \|\psi_1\|_2^4 >0,
   \end{eqnarray*}
which implies \eqref{2.5}.
 \end{proof}

 \emph{Existence of initial data leading to Theorem }2.6:
  \begin{prop}
     Suppose that $p=3$, $\lam <b\lam_1$ and $\eqref{2.7}$ hold, then there exists such initial data $(u_0, u_1)$ which satisfy $\eqref{1.2}$ and each of the conditions of $(h1)$-$(h4)$ in Theorem $2.6$.
   \end{prop}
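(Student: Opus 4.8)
The plan is to produce all four families of initial data by rescaling a single element $w\in\big(\textit{\textbf{N}}_3\cup\textit{\textbf{N}}_3^{\,-}\big)\cap H^2(\Omega)$, which exists by hypothesis \eqref{2.7}. Since $\lam<b\lam_1$ and \eqref{2.7} holds, the discussion at the end of Section 1.2 forces $a<1/\Lambda$ and, via \eqref{1.18}, gives $A:=\|w\|_4^4-a\|\nabla w\|_2^4>0$; also $\|\nabla w\|_2\ne0$ (as $0\notin\textit{\textbf{N}}_3\cup\textit{\textbf{N}}_3^{\,-}$), so by \eqref{1.6} $B:=b\|\nabla w\|_2^2-\lam\|w\|_2^2>0$, and clearly $\tau w\in H^2(\Omega)\cap H_0^1(\Omega)$ for every $\tau>0$. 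From \eqref{zh809-4} with $p=3$,
$$J(\tau w)=-\frac{A}{4}\tau^4+\frac{B}{2}\tau^2 ,$$
hence $J(\tau w)\to-\infty$ as $\tau\to+\infty$, and by Proposition 3.2 the value $J(\sigma_w w)=\sup_{\tau>0}J(\tau w)\ge d_3$ is attained at the $\sigma_w$ of \eqref{3.7}, with $\tau w\in\textit{\textbf{N}}_3^{\,-}$ exactly when $\tau>\sigma_w$. In each case I would take $u_0=\tau w$ for an appropriate $\tau>0$ and $u_1$ equal to $0$ or to a positive multiple $c\,w$, so that \eqref{1.2} is automatic.

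For $(h1)$ and $(h2)$: first choose $\tau$ so large that $J(\tau w)<0$ and set $u_0=\tau w$. Taking $u_1=0$ (or any $u_1\in H_0^1(\Omega)$ with $\|u_1\|_2^2<-2J(u_0)$) gives $E(0)<0$, which is $(h1)$; taking $u_1=c\,w$ with $c=\sqrt{-2J(u_0)}/\|w\|_2>0$ gives $E(0)=\frac12c^2\|w\|_2^2+J(u_0)=0$ together with $\int_\Omega u_0u_1\,\dif x=\tau c\|w\|_2^2>0$, i.e.\ \eqref{2.8} --- this is $(h2)$ (and in both cases $u_0\in\textit{\textbf{W}}_3^{\,-}$ follows as noted in Remark 2.7). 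For $(h3)$: on $(\sigma_w,+\infty)$ the function $\tau\mapsto J(\tau w)$ decreases continuously from $J(\sigma_w w)\ge d_3$ to $-\infty$, so by the intermediate value theorem there is $\tau^{*}>\sigma_w$ with $J(\tau^{*}w)\in(0,d_3)$; then $u_0:=\tau^{*}w\in J^{d_3}\cap\textit{\textbf{N}}_3^{\,-}=\textit{\textbf{W}}_3^{\,-}$, and any $u_1\in H_0^1(\Omega)$ with $\frac12\|u_1\|_2^2<d_3-J(u_0)$ yields $0<E(0)<d_3$.

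Case $(h4)$ is the genuinely delicate one, since it requires $E(0)\ge d_3$, the lower bound $\|u_0\|_2^2>\frac{4}{b_0\lam_1}E(0)$, and \eqref{2.8} simultaneously. The decisive observation is that along the ray $\tau\mapsto\tau w$ the mass $\|\tau w\|_2^2=\tau^2\|w\|_2^2$ grows only quadratically in $\tau$, while $-J(\tau w)\sim\frac{A}{4}\tau^4$ grows quartically; thus one can hold $E(0)$ at a fixed value and still let $\|u_0\|_2^2\to+\infty$. Explicitly, take $u_0=\tau w$ with $\tau$ large and $u_1=c_\tau w$ with $c_\tau:=\sqrt{2\big(d_3-J(\tau w)\big)}/\|w\|_2>0$ (well defined because $J(\tau w)\to-\infty$). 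Then $E(0)=\frac12c_\tau^2\|w\|_2^2+J(\tau w)=d_3$, condition \eqref{2.8} holds since $\int_\Omega u_0u_1\,\dif x=\tau c_\tau\|w\|_2^2>0$, and because $b_0>0$ the number $\frac{4}{b_0\lam_1}d_3$ is a fixed constant while $\|u_0\|_2^2=\tau^2\|w\|_2^2\to+\infty$, so $\|u_0\|_2^2>\frac{4}{b_0\lam_1}E(0)$ once $\tau$ is large enough --- which is $(h4)$. All the remaining verifications ($u_0\in H^2(\Omega)\cap H_0^1(\Omega)$, $u_1\in H_0^1(\Omega)$, and continuity of $\tau\mapsto J(\tau w)$ and $\tau\mapsto\|\tau w\|_2^2$) are routine, so this growth-rate competition in $(h4)$ is the only step I expect to require real care.
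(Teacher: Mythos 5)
Your proposal is correct and follows essentially the same route as the paper's proof: both rescale a fixed element of $\left(\textit{\textbf{N}}_3 \cup \textit{\textbf{N}}_3^{\,-}\right)\cap H^2(\Omega)$ along the ray $\tau \mapsto \tau w$ and exploit that $J(\tau w) = -\tfrac{A}{4}\tau^4 + \tfrac{B}{2}\tau^2$ with $A>0$, tuning $u_1$ as a multiple of a fixed function. Your parameter choices are marginally cleaner (an intermediate-value argument for $(h3)$, and pinning $E(0)=d_3$ while letting $\|u_0\|_2^2 \to \infty$ for $(h4)$, versus the paper's explicit root $K$ and two-sided window for $m_1^2$), but the underlying mechanism --- the quartic decay of $J$ along the ray beating the quadratic growth of the mass --- is identical.
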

\begin{proof}
For $v_0 \in  \left(\textit{\textbf{N}}_3 \cup \textit{\textbf{N}}_3^{\,-}\right) \cap H^2(\Omega)$ and $v_1 \in H_0^1(\Omega)$ satisfying $\|v_1\|_2 =1$, we take $(u_0, u_1) = (kv_0, m v_1)$ with $k$, $m \in \mathbb{R}_+$. Then $(u_0, u_1)$ satisfy \eqref{1.2}. In what follows, we choose $k$, $m>0$ such that $(u_0, u_1)$ satisfy one of the conditions $(h1)$-$(h4)$ in Theorem 2.6.\par
  (i) \emph {Construct initial data satisfying $(h1)$}. \par
  Let $\mu_0 = \|v_0\|_4^4 - a\|\nabla v_0\|_2^4$, $\mu_1 = b\|\nabla v_0\|_2^2 - \lam \|v_0\|_2^2$. Due to \eqref{1.18}, \eqref{1.6} and $\lam \leq b\lam_1$, we infer that $\mu_0 >0$ and $\mu_1 \geq 0$. By a direct computation, we get
   \begin{eqnarray}\label{6.3}
     E(0) = \frac{m^2}{2} - \frac{\mu_0}{4}k^4 + \frac{\mu_1}{2}k^2.
   \end{eqnarray}
Hence, fixing $m=1$, we can choose sufficiently large $k$ such that $E(0)<0$ and $(u_0, u_1)$ satisfy $(h1)$.\par
 (ii) \emph {Construct initial data satisfying $(h2)$}. \par
  We may suppose that $\int_{\Omega} v_0 v_1 \dif x >0$, and by virtue of \eqref{6.3}, we check that $(u_0, u_1)$ satisfy $(h2)$ for $m=1$ and $k=\bar{k}$, where $\bar{k}>0$ is a root of $\mu_0 k^4 -2\mu_1 k^2 = 2$.\par
  (iii) \emph {Construct initial data satisfying $(h3)$}. \par
   Due to \eqref{1.12} and $p=3$, we obtain that $I(u_0) = I(kv_0) = - k^4 \mu_0 + k^2 \mu_1 <0$ by choosing $k^2 >\mu_1/\mu_0$.
  Since $\lambda < b\lambda_1$ and $0<a<1/\Lambda$, we deduce from \eqref{3.11} that $\mu_1^2 \geq 4\mu_0 d_3$. Then, we can pick
    \begin{eqnarray}\label{6.4}
    K=\frac{\mu_1 + \sqrt{\mu_1^2-4\mu_0 d_3}}{\mu_0},
   \end{eqnarray}
   and obtain that $\mu_1/\mu_0 \leq K < 2\mu_1/\mu_0$.
  Observing that
  $$J(u_0) = J(kv_0) = - \frac{\mu_0}{4}k^4 + \frac{\mu_1}{2}k^2,
  $$
  and by a direct calculation, we infer that $0<J(u_0) <d_3$, and $u_0 \in \textit{\textbf{W}}_3^{\,-}$ for $K<k^2<2\mu_1/\mu_0$.\par
  Moreover, we denote by $\varepsilon_0 = d_3 - J(u_0)$, thus $\varepsilon_0 >0$ for $k^2 >K$. Fixing $k$ such that $K<k^2<2\mu_1/\mu_0$, and selecting $m< \sqrt{\varepsilon_0}$, then we have
  $$0<E(0)=\frac{m^2}{2}+J(u_0)<d_3,
  $$
  which implies that $(u_0, u_1)$ satisfy $(h3)$.\par
  (iv) \emph {Construct initial data satisfying $(h4)$}. \par
  Assume in addition that $\int_{\Omega} v_0 v_1 \dif x >0$, and let $V_0 = \|v_0\|_2^2$, then $0 \not\in  \textit{\textbf{N}}_3 \cup \textit{\textbf{N}}_3^{\,-}$ indicates that $V_0 >0$. \par
  Fixing $k=k_0$ such that
  $$k_0^2 >K_0:= \max\left\{K, \frac{4d_3}{b_0 \lam_1 V_0}\right\},
  $$
  where $K>0$ is defined by \eqref{6.4}. By virtue of $k_0^2 >K_0$ and $\mu_0 k_0^4 -2\mu_1 k_0^2 +4d_3 >0$, we may choose $m = m_1$ such that
    $$\frac{\mu_0}{2}k_0^4 - \mu_1 k_0^2 +2d_3\, \leq\, m_1^2\, <\, \frac{\mu_0}{2}k_0^4 - \mu_1 k_0^2 + \frac{1}{2}b_0\lam_1 V_0 k_0^2.
    $$
    Selecting $k=k_0$ and $m=m_1$, then we deduce from \eqref{6.3} that
    $$d_3 \leq E(0) < \frac{1}{4}b_0\lam_1 k_0^2 \|v_0\|_2^2 = \frac{1}{4}b_0\lam_1 \|u_0\|_2^2,
    $$
 which implies that $(u_0, u_1)$ satisfy $(h4)$.
 \end{proof}

\emph{Existence of initial data leading to Theorem }2.8:
  \begin{prop}
   Let $p=3$, $b\lam_1 < \lam < b\lam_1 + \delta$, where $\delta>0$ is the number appearing in Theorem $2.8$ {\rm(}see also Lemma $3.8${\rm)}. \par
   {\rm (1)} Assume in addition that $\eqref{2.12}$ satisfies, then there exists such initial data $(u_0, u_1)$ which satisfy $\eqref{1.2}$, $u_0 \in \textit{\textbf{N}}_{3} ^{\, -}$ and $E(0) <d_3^-$, where $d_3^-$ is defined by \eqref{2.13}. \par
   {\rm(2)}  $\eqref{2.12}$ holds provided that $a<1/\tilde{A}$, where $\tilde{A}$ is defined by
    $$\tilde{A} = \inf_{\mbox{\tiny$\begin{array}{c}
   u\in H^1_0(\Omega)\cap H^2 (\Omega)\\
       \|\nabla u\|_2 \neq 0
       \end{array}$}} \frac{~\|\nabla u\|_2^4~}{\|u\|^4_{4}}{\rm .}
    $$
       \end{prop}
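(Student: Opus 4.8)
The plan is to prove (1) by a one‑parameter scaling argument along a ray through a point of $\left(L^+\cap\textit{\textbf{N}}_3\right)\cap H^2(\Omega)$, and to prove (2) by combining the definition of $\tilde A$ with Lemma 3.8 and Proposition 3.9. Throughout, the standing hypotheses of Theorem 2.8 — namely \eqref{2.11} and $0<a<1/\Lambda$ — are assumed in force, so that Lemma 3.8, Proposition 3.9 and Lemma 3.11 are applicable for $\lambda\in(b\lambda_1,b\lambda_1+\delta)$.

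For part (1), assume \eqref{2.12} and fix $v_0\in\left(L^+\cap\textit{\textbf{N}}_3\right)\cap H^2(\Omega)$ together with some $v_1\in H_0^1(\Omega)$ with $\|v_1\|_2=1$; I would look for $(u_0,u_1)$ of the form $(kv_0,mv_1)$ with $k>1$, $m>0$ to be adjusted, noting that \eqref{1.2} holds for all such $k,m$ because $v_0\in H^2(\Omega)\cap H_0^1(\Omega)$. Writing $\mu_1:=b\|\nabla v_0\|_2^2-\lambda\|v_0\|_2^2$, we have $\mu_1>0$ since $v_0\in L^+$, while $I(v_0)=0$ forces $\|v_0\|_4^4-a\|\nabla v_0\|_2^4=\mu_1>0$, so $v_0\in S$ and $\sigma_{v_0}=1$ by \eqref{3.7}. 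Proposition 3.9 (ii) applied with $u=v_0$ then gives $kv_0\in\textit{\textbf{N}}_3^{\,-}$ for every $k>1$, and inserting $\|v_0\|_4^4-a\|\nabla v_0\|_2^4=\mu_1$ into \eqref{3.1} with $p=3$ yields $J(kv_0)=\tfrac{\mu_1}{4}\,k^2(2-k^2)$, which tends to $-\infty$ as $k\to+\infty$. Since $d_3^{\,-}<0$ by Lemma 3.11 (1), I would now fix $k>1$ so large that $J(kv_0)<d_3^{\,-}$ and then choose $m>0$ with $\tfrac12 m^2<d_3^{\,-}-J(kv_0)$ (the right side being positive); for $(u_0,u_1)=(kv_0,mv_1)$ we then have $u_0\in\textit{\textbf{N}}_3^{\,-}$ and, by \eqref{1.11}, $E(0)=\tfrac12 m^2+J(kv_0)<d_3^{\,-}$, which is exactly what (1) requires.

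For part (2), first observe that $H_0^1(\Omega)\cap H^2(\Omega)\subseteq H_0^1(\Omega)$, so the infimum defining $\tilde A$ is taken over a smaller class than the one defining $\Lambda$; hence $\tilde A\ge\Lambda$ and $a<1/\tilde A$ forces $a<1/\Lambda$, consistent with the standing hypotheses. By definition of $\tilde A$ there is $w\in H_0^1(\Omega)\cap H^2(\Omega)$ with $\|\nabla w\|_2\ne0$ and $\|\nabla w\|_2^4/\|w\|_4^4<1/a$, i.e. $\|w\|_4^4-a\|\nabla w\|_2^4>0$, so $w\in S$ by \eqref{3.5}. By Lemma 3.8 we have $\overline{L^-}\cap\overline{S}=\{0\}$ for $\lambda\in(b\lambda_1,b\lambda_1+\delta)$, and in particular $S\subseteq L^+$, whence $w\in L^+\cap S$. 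Proposition 3.9 (ii) then produces the number $\sigma_w>0$ of \eqref{3.7} with $\sigma_w w\in\textit{\textbf{N}}_3\cap L^+$, and $\sigma_w w\in H^2(\Omega)$ because $w\in H^2(\Omega)$; therefore $\left(L^+\cap\textit{\textbf{N}}_3\right)\cap H^2(\Omega)\ni\sigma_w w$ is nonempty, i.e. \eqref{2.12} holds.

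The main obstacle — the point that dictates the whole construction — is that $E(0)$ must be pushed strictly below the \emph{negative} level $d_3^{\,-}$ even though $u_1$ is allowed to be nonzero. This is feasible only because $J(kv_0)\to-\infty$ along the ray $k\mapsto kv_0$, which itself rests on $v_0\in L^+$ so that the coefficient $\mu_1=\|v_0\|_4^4-a\|\nabla v_0\|_2^4$ multiplying $-\tfrac14 k^4$ is strictly positive, together with the fact from Lemma 3.11 (1) that $d_3^{\,-}$ is a fixed negative number, which leaves room for the small kinetic term $\tfrac12 m^2$. Everything else is routine bookkeeping with \eqref{3.1}, \eqref{3.7} and the structural results of Section 3.2.
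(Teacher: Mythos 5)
Your proof is correct and follows essentially the same route as the paper: for (1) you scale a point $v_0\in\left(L^+\cap\textit{\textbf{N}}_3\right)\cap H^2(\Omega)$ along the ray $k\mapsto kv_0$, use $J(kv_0)=\tfrac{\mu_1}{4}k^2(2-k^2)\to-\infty$ to drop below the negative level $d_3^{\,-}$, and then absorb a small kinetic term; for (2) you use $a<1/\tilde A$ to produce an $H^2$ element of $S\subseteq L^+$ and project it onto $\textit{\textbf{N}}_3\cap L^+$ via Proposition 3.9 (ii), exactly as the paper does. The only cosmetic difference is that you invoke Proposition 3.9 (ii) to place $kv_0$ in $\textit{\textbf{N}}_3^{\,-}$ where the paper computes $I(kv_0)=-(k^4-k^2)s_0<0$ directly.
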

 \begin{proof}
(1) Choosing $v_0 \in \left(\textit{\textbf{N}}_3 \cap L^+\right) \cap H^2(\Omega)$, then we have $\|v_0\|_4^4 - a \|\nabla v_0\|_2^4 = b\|\nabla v_0\|_2^2 - \lam \|v_0\|_2^2 >0$. Let $s_0 = \|v_0\|_4^4 - a \|\nabla v_0\|_2^4$ and $u_0 = kv_0$, noting that $d_3^- <0$, then we obtain $J(u_0) = -(k^4 -2k^2)s_0 /4 <d_3^-$ and $I(u_0) = -(k^4 - k^2)s_0 <0$, provided that
 \[k^2 > 1+ \sqrt{1-\frac{4d_3^-}{s_0}}.
 \]
   Let $u_1 \in H_0^1(\Omega)$ with $\|u_1\|_2^2 < d_3^- - J(u_0)$, hence $(u_0, u_1)$ satisfy \eqref{1.2}, $u_0 \in \textit{\textbf{N}}_3^{\,-}$ and $E(0) <d_3^-$.

   (2) Since $a<1/\tilde{A}$ and by the definition of $\tilde{A}$, for any $\varepsilon \in (0,~(1-a\tilde{A})/a)$, we may choose $u_\varepsilon \in H_0^1(\Omega) \cap H^2(\Omega)$ satisfying
 $$a\|\nabla u_\varepsilon\|_2^4 < a(\tilde{A}+\varepsilon) \|u_\varepsilon\|_4^4 < \|u_\varepsilon\|_4^4,
  $$
  which implies that $u_\varepsilon \in S$.
  Then, it follows from Proposition 3.9 (ii) that there exists  $\sigma_{u_\varepsilon} >0$ such that $\sigma_{u_\varepsilon} u_\varepsilon \in \textit{\textbf{N}}_3 \cap L^+$, and \eqref{2.12} holds.

 Then, the proof of Proposition 6.4 is finished.
\end{proof}

\emph{Existence of initial data leading to Theorem }2.9:
\begin{prop}
     If $3<p<5$ and $\lam \leq b\lam_1$, then there exists such initial data $(u_0, u_1)$ which satisfy $\eqref{1.2}$, $u_0 \in \textit{\textbf{W}}_{p} ^{\, +}$ and $E(0)< d_p$.
      \par
 \end{prop}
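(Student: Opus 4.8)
The plan is to produce the required initial data explicitly by starting from a nonzero element of the Nehari manifold and rescaling it into the potential well $\textit{\textbf{W}}_p^{\,+} = J^{d_p}\cap\textit{\textbf{N}}_p^{\,+}$, exactly as was done for the cases $p=3$ in Proposition 6.2 and Proposition 6.3. First I would invoke Theorem 4.6, which under the hypotheses $3<p<5$ and $\lambda\le b\lambda_1$ gives $\textit{\textbf{W}}_p^{\,+}\neq\emptyset$ and, more precisely, the inclusion $B_{\hat r_p}\subseteq\textit{\textbf{W}}_p^{\,+}$ with $\hat r_p>0$ defined by \eqref{4.9}. Thus any $v_0\in H_0^1(\Omega)\cap H^2(\Omega)$ with $0<\|\nabla v_0\|_2\le\hat r_p$ lies in $\textit{\textbf{W}}_p^{\,+}$; such $v_0$ exists because $H_0^1(\Omega)\cap H^2(\Omega)$ is dense in $H_0^1(\Omega)$, so one can simply take a smooth compactly supported bump and scale it so its Dirichlet norm is at most $\hat r_p$. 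This handles the condition $u_0\in\textit{\textbf{W}}_p^{\,+}$ together with $\eqref{1.2}$ for the first component.

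Next I would choose $u_1$. Fix $u_0=v_0$ as above; then $J(u_0)<d_p$ since $u_0\in J^{d_p}$. Pick $u_1\in H_0^1(\Omega)$ with $\tfrac12\|u_1\|_2^2<d_p-J(u_0)$ — for instance $u_1=\varepsilon w$ for some fixed $w\in H_0^1(\Omega)$ and $\varepsilon>0$ small enough, or simply $u_1=0$. Then $u_1\in H_0^1(\Omega)$ so $\eqref{1.2}$ holds, and by the definition of $E(0)$ in \eqref{1.11},
\[
E(0)=\tfrac12\|u_1\|_2^2+J(u_0)<d_p,
\]
which is exactly the remaining requirement. This completes the construction.

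I do not expect any serious obstacle here; the statement is a compatibility check and every ingredient is already in place. The only point requiring a word of care is the regularity $u_0\in H^2(\Omega)\cap H_0^1(\Omega)$ demanded by $\eqref{1.2}$: Theorem 4.6 describes $\textit{\textbf{W}}_p^{\,+}$ as a subset of $H_0^1(\Omega)$, but since $B_{\hat r_p}\subseteq\textit{\textbf{W}}_p^{\,+}$ and $H_0^1(\Omega)\cap H^2(\Omega)$ contains elements of arbitrarily small (positive) Dirichlet norm, one can always realize $u_0$ inside $H^2$; the value $d_p>0$ guaranteed by Proposition 4.5 ensures $\hat r_p>0$, so the ball $B_{\hat r_p}$ is genuinely nonempty. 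Hence I would write the proof in essentially two sentences: choose $u_0\in H_0^1(\Omega)\cap H^2(\Omega)$ with $0<\|\nabla u_0\|_2\le\hat r_p$ so that $u_0\in\textit{\textbf{W}}_p^{\,+}$ by Theorem 4.6, then choose $u_1\in H_0^1(\Omega)$ with $\|u_1\|_2^2<2(d_p-J(u_0))$ so that $E(0)<d_p$, which yields the claim.
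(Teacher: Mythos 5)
Your proposal is correct and follows essentially the same route as the paper: choose $u_0\in H_0^1(\Omega)\cap H^2(\Omega)$ with small positive Dirichlet norm so that Theorem 4.6 places it in $B_{\hat r_p}\subseteq\textit{\textbf{W}}_p^{\,+}$, then take $u_1$ with $\|u_1\|_2^2$ small enough that $E(0)=\tfrac12\|u_1\|_2^2+J(u_0)<d_p$. The only (harmless) discrepancy is the closed versus open condition $\|\nabla u_0\|_2\le\hat r_p$ against the strict inequality in the definition of $B_{\hat r_p}$, an imprecision the paper itself shares.
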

 \begin{proof}
 Let $u_0 \in H_0^1(\Omega) \cap H^2(\Omega)$, then it follows from Theorem 4.6 that $u_0 \in \textit{\textbf{W}}_{p} ^{\, +} = J^{d_p} \cap \textit{\textbf{N}}_p^{\,+}$ by choosing $0< \|\nabla u_0\| \leq \hat{r}_p$, where $\hat{r}_p$ is defined by \eqref{4.9}. \par
  Furthermore, selecting $u_1 \in H_0^1(\Omega)$ with $\|u_1\|_2^2 < d_p -J(u_0)$ and noting \eqref{1.11}, then we obtain that $(u_0, u_1)$ satisfying \eqref{1.2}, $u_0 \in \textit{\textbf{W}}_{p} ^{\, +}$ and $E(0)< d_p$.
  \end{proof}

\emph{Existence of initial data leading to Theorem }2.10:
\begin{prop}
     Let $p>3$, then there exists such initial data $(u_0, u_1)$ which satisfy $\eqref{1.2}$ and each of the conditions in Theorem $2.10$.
   \end{prop}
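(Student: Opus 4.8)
The plan is to follow the dilation strategy of Propositions 6.1 and 6.3: all the data will be produced as $(u_0,u_1)=(kv_0,mv_1)$ for a fixed pair, with two scalar parameters $k,m>0$ tuned case by case. Fix $v_0\in H^1_0(\Omega)\cap H^2(\Omega)$ with $\|\nabla v_0\|_2\neq 0$ and $v_1\in H^1_0(\Omega)$ with $\|v_1\|_2=1$ and $\int_\Omega v_0v_1\,\dif x>0$ (for instance $v_1=v_0/\|v_0\|_2$). Then \eqref{1.2} holds for every $k,m$, the quantity $\int_\Omega u_0u_1\,\dif x=km\int_\Omega v_0v_1\,\dif x$ is positive so \eqref{2.8} is automatic, and by \eqref{zh809-4} and \eqref{1.11} we have $E(0)=\tfrac{m^2}{2}+J(kv_0)$ with
\begin{eqnarray*}
 J(kv_0)&=&\frac{a}{4}k^4\|\nabla v_0\|_2^4+\frac{b}{2}k^2\|\nabla v_0\|_2^2-\frac{\lambda}{2}k^2\|v_0\|_2^2-\frac{k^{p+1}}{p+1}\|v_0\|_{p+1}^{p+1}.
\end{eqnarray*}
The elementary fact driving everything is that $p+1>4$, so the last term dominates and $J(kv_0)\to-\infty$ as $k\to+\infty$; in particular $J(kv_0)$ can be made negative, and as large negative as we please, independently of the sign of $\lambda-b\lambda_1$.

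From here the cases reduce to bookkeeping. For $(a1)$ take $m=1$ and $k$ so large that $J(kv_0)<-\tfrac12$, giving $E(0)<0$. For $(a2)$ take $k$ with $J(kv_0)<0$ and then $m=\sqrt{-2J(kv_0)}$, so $E(0)=0$. For $(a4)$ take $k$ large with $J(kv_0)<0$; the interval $\left(-2J(kv_0),\,-2J(kv_0)+\tfrac{(p-3)a\lambda_1^2}{2(p+1)}k^4\|v_0\|_2^4\right)$ is a nonempty set of admissible values of $m^2$, and any choice in it gives $0<E(0)<\tfrac{(p-3)a\lambda_1^2}{4(p+1)}\|u_0\|_2^4$, which is exactly \eqref{2.15}. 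For $(b1)$, with $h_0$ as in \eqref{2.16} (and $h_0>0$ since $p>3$ and $\lambda>b\lambda_1$), take $m=1$ and $k$ so large that $J(kv_0)<-\tfrac{h_0}{2p+2}-\tfrac12$; for $(b2)$ take $k$ with $J(kv_0)<-\tfrac{h_0}{2p+2}$ and then $m=\sqrt{-\tfrac{h_0}{p+1}-2J(kv_0)}>0$, so that $E(0)=-\tfrac{h_0}{2p+2}$.

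The one item that is not a single dilation of an arbitrary $v_0$ is $(a3)$, where besides $0<E(0)<d_p$ and \eqref{2.8} we must place $u_0$ in $\textit{\textbf{W}}_p^{\,-}=J^{d_p}\cap\textit{\textbf{N}}_p^{\,-}$; since here $3<p<5$ and $\lambda\le b\lambda_1$, the results of Section 4 apply. I would normalise $\|\nabla v_0\|_2=1$ and invoke Proposition 4.2: there is $\tau_{v_0}>0$ with $\tau_{v_0}v_0\in\textit{\textbf{N}}_p$, the function $J(\tau v_0)$ is strictly decreasing on $(\tau_{v_0},+\infty)$ with $J(\tau v_0)\to-\infty$, and $J(\tau_{v_0}v_0)=\sup_{\tau>0}J(\tau v_0)\ge d_p>0$ by Proposition 4.5. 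Hence there is $\tau^*>\tau_{v_0}$ with $J(\tau^*v_0)\in(0,d_p)$, and since $I(\tau v_0)<0$ for $\tau>\tau_{v_0}$ the function $u_0:=\tau^*v_0\in H^1_0(\Omega)\cap H^2(\Omega)$ lies in $\textit{\textbf{W}}_p^{\,-}$. Setting $\varepsilon_0=d_p-J(u_0)>0$ and taking $u_1=mv_1$ with $0<m<\sqrt{2\varepsilon_0}$ yields $0<E(0)=\tfrac{m^2}{2}+J(u_0)<d_p$, and \eqref{2.8} again holds by the choice of $v_1$. I expect $(a3)$ to be the only genuinely structural point, precisely because there $u_0$ must simultaneously sit on the outside component $\textit{\textbf{N}}_p^{\,-}$ and below the well depth $d_p$, whereas every other item only exploits the divergence $J(kv_0)\to-\infty$; the remaining work is a routine check of the inequalities and of the sign constraint $\lambda\le b\lambda_1$ (Case $(a)$) or $\lambda>b\lambda_1$ (Case $(b)$) in each item.
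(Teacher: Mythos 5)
Your proposal is correct and follows essentially the same route as the paper: all data are taken of the form $(u_0,u_1)=(kv_0,mv_1)$ and every case except $(a3)$ is settled by the divergence $J(kv_0)\to-\infty$ (from $p+1>4$) together with an elementary choice of $k$ and $m$. The only cosmetic difference is in case $(a3)$, where you place $u_0=\tau^* v_0$ in $\textit{\textbf{W}}_p^{\,-}$ by intermediate values along the fiber map via Proposition 4.2, while the paper instead takes $k_3\|\nabla v_0\|_2>R_p$ and invokes the inclusion $\bar{B}_{R_p}^{\,c}\cap J^{d_p}\subseteq \textit{\textbf{W}}_p^{\,-}$ of Theorem 4.6; both rest on the same Section 4 structure results and both are valid.
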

\begin{proof}
Choosing $v_0 \in  H_0^1(\Omega) \cap H^2(\Omega)$ with $\|v_0\|_{p+1} >0$, $v_1 \in H_0^1(\Omega)$ with $\|v_1\|_2 =1$, and $(u_0, u_1) = (kv_0, m v_1)$ with $k$, $m \in \mathbb{R}_+$. Then $(u_0, u_1)$ satisfy \eqref{1.2}, and
\begin{eqnarray}\label{6.5}
  E(0) =  \frac{m^2}{2} - Ak^{p+1}  + Bk^4 + Ck^2,
\end{eqnarray}
where
 $$A=\frac{1}{p+1}\|v_0\|_{p+1}^{p+1}, \quad B=\frac{a}{4}\|\nabla v_0\|_2^4,\quad C=\frac{1}{2}\left(b\|\nabla v_0\|_2^2 - \lam \|v_0\|_2^2\right).
 $$

 In what follows, we choose $k$, $m>0$ such that $(u_0, u_1)$ satisfy each of the conditions in Theorem 2.10. To this end, we define $\phi(k) = - Ak^{p+1}  + Bk^4 + Ck^2$.\par
  (i) \emph {Construct initial data satisfying $(a1)$}: $\lam\leq b\lam_1$ and $E(0)<0$. \par
  Noting $A>0$ and $p>3$, we know that there exists $k_1 >0$ such that $\phi(k_1)<-1/2$. Picking $m=1$ and $k=k_1$, we then deduce from \eqref{6.5} that $E(0)<0$ and $(u_0, u_1)$ satisfy $(a1)$.\par
 (ii) \emph {Construct initial data satisfying $(a2)$}: $\lam\leq b\lam_1$, \eqref{2.8} and $E(0)=0$. \par
  We may suppose that $\int_{\Omega} v_0 v_1 \dif x >0$. Choosing $k_2 >0$ such that $\phi(k_2) = -1/2$. Then by \eqref{6.5}, we obtain that $(u_0, u_1)$ satisfy $(a2)$ for $m=1$ and $k=k_2$.\par
  (iii) \emph {Construct initial data satisfying $(a3)$}: $3<p<5$, $\lam\leq b\lam_1$, \eqref{2.8}, $u_0 \in \textit{\textbf{W}}_{p} ^{\, -}$ and $0<E(0)<d_p$. \par

 Taking $(v_0, v_1)$ such that $\int_{\Omega} v_0 v_1 \dif x >0$. By virtue of $p>3$ and $A>0$, we deduce that there exists $\hat{k} >0$ such that
  \begin{eqnarray}\label{6.6}
     J(k v_0) = Bk^4 + C k^2 - A k^{p+1} <d_p,\qquad {\rm for}\ k>\hat{k}.
  \end{eqnarray}
  Furthermore, we can select a number $K>0$ such that $K \|\nabla v_0\|_2 >R_p$, where $R_p$ is defined by \eqref{4.9}. Thanks to Theorem 4.6, by choosing $u_0 = k_3 v_0$ and $k_3 > \max\{\hat{k}, K\}$, we have $J(u_0)<d_p$ and $u_0 \in \textit{\textbf{W}}_{p} ^{\, -}$.

   Moreover, picking $m^2 \in \left(-2J(u_0), 2d_p -2J(u_0)\right)$ and due to \eqref{6.5}, then we arrive at
  $$0<E(0)=\frac{m^2}{2}+J(u_0)<d_p,
  $$
  which implies that $(u_0, u_1)$ satisfy $(a3)$.\par
  (iv) \emph {Construct initial data satisfying $(a4)$}: $\lam \leq b\lam_1$, \eqref{2.8} and \eqref{2.15} hold. \par
  Let $\int_{\Omega} v_0 v_1 \dif x >0$ and pick
  $$D= \frac{(p-3)a\lam_1^2}{4(p+1)} \|v_0\|_2^4.
  $$
   Due to $p>3$ and $A>0$, then there exists $k_4 >0$ such that
  \begin{eqnarray}\label{6.7}
    J(k_4 v_0) = - A k_4^{p+1} + Bk_4^4 + C k_4^2 < D k_4^4.
  \end{eqnarray}
  Choosing $m_1 >0$ such that
 \begin{eqnarray*}
   -2J(k_4 v_0) < m_1^2 < 2D k_4^4 - 2J(k_4 v_0),
 \end{eqnarray*}
  and letting $(u_0, u_1) = (k_4 v_0, m_1 v_1)$, then we deduce from \eqref{6.5} and \eqref{6.7} that $(u_0, u_1)$ satisfy $(a4)$.\par
 (v) \emph {Construct initial data satisfying $(b1)$}: $\lam > b\lam_1$ and $E(0) < -h_0/(2p+2)$.\par
 It follows from $p>3$ and $A>0$ that
 $$J(kv_0) = - Ak^{p+1}  + Bk^4 + Ck^2 \rightarrow -\infty, \qquad {\rm as}\ k\rightarrow \infty,
 $$
 and there exists $k_5 >0$ such that
 $$J(k_5 v_0) < - \frac{1}{2} - \frac{h_0}{2(p+1)}.
 $$
 Let $(u_0, u_1) = (k_5 v_0, v_1)$ and due to \eqref{6.5}, then we know $(u_0, u_1)$ satisfy $(b1)$.\par
 (vi) \emph {Construct initial data satisfying $(b2)$}: $\lam > b\lam_1$, $E(0) = -h_0/(2p+2)$ and \eqref{2.8}.\par
 Using the same way as in proving $(a2)$ and $(b1)$, we can verify the existence of initial data $(u_0, u_1)$ satisfying $(b2)$.
 \end{proof}

\end{document}